\DeclareMathOperator*{\argmax}{arg\,max}
\DeclareMathOperator*{\argmin}{arg\,min}
\crefname{hypothesis}{Hypothesis}{Hypotheses}
\crefname{fact}{Fact}{Facts}
\title{Geometry and factorization of multivariate Markov chains with applications to MCMC acceleration and approximate inference\thanks{Submitted to the editors in April 2025.}}
\author{Michael C.H. Choi\thanks{Department of Statistics and Data Science, National University of Singapore, Singapore (\email{mchchoi@nus.edu.sg}).}
\and Youjia Wang\thanks{Department of Statistics and Data Science, National University of Singapore, Singapore (\email{e1124868@u.nus.edu}).}
\and Geoffrey Wolfer\thanks{Department of Electrical Engineering and Computer Science, Tokyo University of Agriculture and Technology, Tokyo, Japan. Part of this research was conducted while the author was at RIKEN AIP and then at Waseda University. (\email{wolfer@go.tuat.ac.jp}).}}
\begin{document}

\maketitle

\begin{abstract}
This paper analyzes the factorizability and geometry of transition matrices of multivariate Markov chains. Specifically, we demonstrate that the induced chains on factors of a product space can be regarded as information projections with respect to the Kullback-Leibler divergence. This perspective yields Han-Shearer type inequalities and submodularity of the entropy rate of Markov chains, as well as applications in the context of large deviations and mixing time comparison. As concrete algorithmic applications in Markov chain Monte Carlo (MCMC) {\color{black}and approximate inference}, we provide \textcolor{black}{three} illustrations based on lifted MCMC, swapping algorithm \textcolor{black}{and factored filtering} to demonstrate projection samplers improve mixing over the original samplers. The projection sampler based on the swapping algorithm resamples the highest-temperature coordinate at stationarity at each step, and we prove that such practice accelerates the mixing time by multiplicative factors related to the number of temperatures and the dimension of the underlying state space when compared with the original swapping algorithm. Through simple numerical experiments on a bimodal target distribution, we show that the projection samplers mix effectively, in contrast to lifted MCMC and the swapping algorithm, which mix less well. \textcolor{black}{In filtering, our proposed factored filtering scheme is able to scale to high dimensions with linear-in-dimension computational cost per step at the price of an approximation error that can be tracked using the distance to independence, compared with the exponential-in-dimension cost per step of the exact filter.}
\end{abstract}

\begin{keywords}
Markov chains, $f$-divergence, mutual information, Markov chain Monte Carlo, large deviations, matrix nearness problem, swapping algorithm, \textcolor{black}{filtering}, Han's inequality, Shearer's lemma, submodularity, information geometry
\end{keywords}

\begin{MSCcodes}
60F10, 60J10, 60J22, 94A15, 94A17
\end{MSCcodes}

\section{Introduction}
Consider two random variables $X,Y$ on a common finite state space $\Omega$. We denote by $\mathcal{P}(\Omega)$ to be the set of probability masses on $\Omega$. We write $p_{(X,Y)} \in \mathcal{P}(\Omega^2)$ to be the joint probability mass of $(X,Y)$ while $p_X$ (resp.~$p_Y$) $\in \mathcal{P}(\Omega)$ denotes the marginal probability mass of $X$ (resp.~$Y$) with respect to $p_{(X,Y)}$. Recall that the Kullback-Leibler (KL) divergence from $\nu$ to $\mu$ with $\mu,\nu \in \mathcal{P}(\Omega)$ is given by
\begin{align}\label{eq:widetildeDKL}
	\widetilde{D}_{KL}(\mu \| \nu) := \sum_{x \in \Omega} \mu(x) \ln \left(\dfrac{\mu(x)}{\nu(x)}\right),
\end{align}
where the usual convention of $0 \ln (0/0) := 0$ applies.
With the above setup in mind, the classical notion of mutual information between $X,Y$ is defined to be
\begin{align*}
	I(X;Y) := \widetilde{D}_{KL}(p_{(X,Y)} \| p_X \otimes p_Y),
\end{align*}
where the symbol $(\mu \otimes \nu) (x,y) := \mu(x) \nu(y)$ for all $x,y \in \Omega$ denotes the product distribution of $\mu$ and $\nu$. Note that $\mu \otimes \nu \in \mathcal{P}(\Omega^2)$. It can be shown, via the chain rule of the KL divergence \cite[Theorem $2.5.3$]{CT06}, that $p_X \otimes p_Y$ is the unique closest product distribution to $p_{(X,Y)}$ in the sense that
\begin{align}\label{eq:mutual}
	I(X;Y) = \min_{\mu,\nu \in \mathcal{P}(\Omega)} \widetilde{D}_{KL}(p_{(X,Y)} \| \mu  \otimes \nu).
\end{align}
In other words, the mutual information $I(X;Y)$ can broadly be interpreted as an entropic ``distance" to the closest product distribution, that is, an entropic distance to independence. From \eqref{eq:mutual}, we immediately see that
$I(X;Y)$ is non-negative, and vanishes if and only if $X,Y$ are independent \cite[equation $(2.90)$]{CT06}.

In the context of Markov chains, we consider two transition matrices $M,L$ on a common finite state space $\mathcal{X}$ and we write the set of all transition matrices on $\mathcal{X}$ to be $\mathcal{L}(\mathcal{X})$. Let $f:\mathbb{R}^+ \to \mathbb{R}$ be a convex function with $f(1) = 0$, and $\pi \in \mathcal{P}(\mathcal{X})$. Analogous to $f$-divergence between infinitesimal generators of continuous-time Markov chains in \cite[Proposition $1.5$]{DM09}, we define the $f$-divergence from $L$ to $M$ with respect to $\pi$ to be
\begin{align*}
	D_f^{\pi}(M\|L):=\sum_{x\in \mathcal{X}}\pi(x)\sum_{y\in\mathcal{X}}L(x,y)f\left(\dfrac{M(x,y)}{L(x,y)}\right),
\end{align*}
where several standard conventions apply in this definition, see Definition \ref{def:f-divergence} below. Note that $\pi$ is arbitrary and $M,L$ may or not admit $\pi$ as their respective stationary distribution. In the context of Markov chain Monte Carlo (MCMC), we can naturally choose $\pi$ as the target distribution and consider the $f$-divergence of two MCMC samplers from $L$ to $M$ with respect to this chosen $\pi$. In the special case of $f(t) = t \ln t$ that generates the KL divergence, we shall write $D^{\pi}_{KL}$, and this coincides with the KL divergence rate from $L$ to $M$ when these two admit $\pi$ as the stationary distribution. For $M \in \mathcal{L}(\mathcal{X}^{(1)})$ and $L \in \mathcal{L}(\mathcal{X}^{(2)})$, their  tensor product $M \otimes L \in \mathcal{L}(\mathcal{X}^{(1)} \times \mathcal{X}^{(2)})$ is defined to be
$$(M \otimes L)((x^1,x^2),(y^1,y^2)) := M(x^1,y^1)L(x^2,y^2),$$
where $x^i,y^i \in \mathcal{X}^{(i)}$ for $i = 1,2$. Suppose now the state space $\mathcal{X} = \mathcal{X}^{(1)} \times \ldots \times \mathcal{X}^{(d)}$ takes on a product form for $d \in \mathbb{N}$. Given a $P \in \mathcal{L}(\mathcal{X})$, what is the closest product Markov chain? To put this question in concrete applications, we can think of an interacting particle system with $d$ particles or agents, such as the voter model \cite{L04}, that is described by a transition matrix $P$. What is the dynamics of the closest independent system to $P$ in which each particle or agent evolves independently of each other? That is, we are interested in seeking a minimizer of 
\begin{align*}
	\mathbb{I}_f^{\pi}(P) := \min_{L_i \in \mathcal{L}(\mathcal{X}^{(i)}),\, \forall i \in \llbracket d \rrbracket} D_f^{\pi}(P \| \otimes_{i=1}^d L_i),
\end{align*}
which is analogous to the classical mutual information as in \eqref{eq:mutual}. Note that we write $\llbracket a,b \rrbracket := \{a, a+1, \ldots, b\}$ for $a,b \in \mathbb{Z}$ and $\llbracket d \rrbracket := \llbracket 1,d \rrbracket$ for $d \in \mathbb{N}$. We will first verify that $\mathbb{I}_f^{\pi}$ shares similar geometric properties with the distance to product distributions (i.e. mutual information between two random variables). As a result, $\mathbb{I}_f^{\pi}(P)$ can be interpreted as a distance to independence of a given $P$.

Orthogonality considerations allow us to identify and determine the closest product chain under KL divergence. This can be seen as a Markov chain version of the matrix nearness problem investigated in \cite{LL23,H89}, as we are seeking the closest product chain from a given $P$. In the dual case when $f$ generates the reverse KL divergence, we present a large deviation principle of Markov chains where $\mathbb{I}_f^{\pi}(P)$ plays a role in the exponent of large deviation probability. These results are presented in Section \ref{subsec:distanceind} and \ref{subsubsec:larged} below.

Note that in \cite{Lacker23} a similar problem has been investigated in the context of diffusion processes, where the author studies the closest independent diffusion process of a given multivariate diffusion process and identifies the associated Wasserstein gradient flow and consequences for the McKean-Vlasov equation. On the other hand in the present manuscript, we focus on the closest independent Markov chain problem and the underlying geometry induced by information divergences between transition matrices. 

We proceed to generalize these notions further. We introduce the leave-one-out, or more generally leave-$S$-out transition matrix, and investigate the factorizability of a transition matrix with respect to a partition or cliques of a given graph in Section \ref{subsec:leaveoneout} to \ref{subsec:clique}. Observing that leave-$S$-out transition matrices are instances of Markov chain decomposition \cite{JSTV04} or induced chains \cite{AF02}, we deduce comparison results for hitting and mixing time parameters such as spectral gap and log-Sobolev constant between $P$ and its information projections.

{\color{black}The connection between $P$ and its leave-$S$-out chains admits a natural interpretation as a Markov chain analogue of Rao--Blackwellization. In classical inference, Rao--Blackwellization replaces a given estimator by its conditional expectation with respect to a sufficient statistic, thereby improving statistical efficiency. In our setting, the leave-$S$-out transition matrices may be viewed as conditional expectations or suitably averaged versions of $P$, see Remark~\ref{rk:condite} below, which enjoy improved mixing properties when compared with $P$.}

Harnessing on these notions we design and propose a projection sampler based upon the swapping algorithm in Section \ref{sec:projectionsamplers}. The sampler can be considered as a starting-state-randomized swapping algorithm: at each step the first or the highest-temperature coordinate is refreshed according to its stationary distribution. We prove that such practice accelerates the mixing time with multiplicative factors related to the number of temperatures and the dimension of the underlying state space. This provides a concrete example where the notion of projection can be applied to improve the design of MCMC algorithms.

{\color{black}In addition to the design of improved MCMC samplers, in Section \ref{sec:factorfilter} we demonstate the usage of projected Markov chains in approximate inference. Specifically, we propose a factored particle filter where we replace the original Markov dynamics in the prediction step of filtering by its product approximation. We shall demonstrate the advantage of such factored filter: its computational cost is linear in the dimension while the original exact filter is exponential in dimension, at the trade-off of introducing approximation error. We illustrate numerically that the distance to independence $\mathbb{I}_f^{\pi}(P)$ where $f$ generates the Kullback-Leibler divergence can serve as a measure of this approximation error.}

\textcolor{black}{We conclude this introduction by providing a simple motivating example in the context of lifted MCMC, where projection can yield improved sampler.}

\subsection{Motivating examples: lifted samplers}\label{subsec:liftedMCMC}

As a simple illustration to demonstrate the idea of projection samplers, we consider lifted MCMC samplers followed by projection to further improve mixing.

Precisely, consider a Metropolis-Hastings chain with transition matrix $Q = Q(M, \pi^{(1)})$ on the state space $\mathcal{X}^{(1)}$, where $M$ is the proposal chain and $\pi^{(1)}$ is the target distribution that we seek to sample from. For simplicity in this example we shall consider $\mathcal{X}^{(1)} = \llbracket -n,n \rrbracket$ for $n \in \mathbb{N}$.

To add memory and to avoid diffusive-like behaviour in the dynamics, one acceleration method is to consider the lifted Metropolis-Hastings chain with transition matrix $P$ on the augmented state space $\mathcal{X} = \mathcal{X}^{(1)} \times \{-1,+1\}$, where the second coordinate can now be interpreted as a direction or velocity variable. Specifically, we consider $P$ to be of run-and-tumble type \cite{GGR21}. From an initial state of $(x,v) \in \mathcal{X}$, where
$\mathcal{X}=\llbracket -n,n\rrbracket \times \{-1,+1\}$, $P$ moves according to the following rules:
\begin{itemize}
	\item (Position move) With probability $a$, $P$ moves from $(x,v)$ to $(y,v)$ according to $Q$.
	
	\item {\color{black}(Directed Metropolis move) With probability $b$, $P$ proposes to move from $x$ to $x+v$. Define
	\[
	\alpha_v(x)
	:=
	\begin{cases}
		\displaystyle \min\left\{1,\frac{\pi^{(1)}(x+v)}{\pi^{(1)}(x)}\right\},
		& \text{if } x+v \in \llbracket -n,n\rrbracket,\\[1.2em]
		0,
		& \text{otherwise.}
	\end{cases}
	\]
	With probability $\alpha_v(x)$, the proposal is accepted and $P$ moves from $(x,v)$ to $(x+v,v)$. With probability $1-\alpha_v(x)$, the proposal is rejected and $P$ moves from $(x,v)$ to $(x,-v)$.}
	
	\item (Flipping the direction) With probability $c$, $P$ moves from $(x,v)$ to $(x,-v)$.
\end{itemize}
We suppose that $a,b,c>0$ and $a+b+c = 1$. \textcolor{black}{To avoid overburdening the introduction, we defer the proof of $\pi = \pi^{(1)} \otimes \mathcal{U}(\{-1,+1\})$-stationarity of the lifted sampler $P$ to Proposition~\ref{prop:pistationaritylifted}}, where $\mathcal{U}(\{-1,+1\})$ denotes the discrete uniform distribution on the two-point space $\{-1,+1\}$. Such $P$ can be understood as a simplified version of the kinetic random walks or Langevin diffusions \cite{M20}, run-and-tumble models \cite{GGR21} or Gustafson's guided walk samplers \cite{G98}.

\textcolor{black}{Note that we keep the explicit velocity-flip move with probability $c$ separate from the rejection-induced flip in the directed Metropolis move. These two mechanisms have different roles. The rejection-induced flip is part of the Metropolis correction that ensures stationarity of the directed move, and its probability $b(1-\alpha_v(x))$ is state-dependent. In contrast, the explicit flip with probability $c$ is a state-independent velocity refreshment mechanism. It provides an additional tuning parameter controlling the persistence of the velocity variable and helps avoid relying solely on rejected proposals to change direction. In the case of $c = 0$ and suppose the sampler is currently at a region where proposals are nearly always accepted, then the velocity may persist for a long time. While this might be desirable in some situations, it can also produce poor exploration behaviour of the sampler.}

We are interested in comparing the following three MCMC samplers:
\begin{itemize}
	\item Original $Q$. This is the baseline MH chain on the position space $\mathcal{X}^{(1)}$ that we wish to sample from and is being lifted to $P$.
	\item Lifted sampler $P$, and we discard the samples associated with the direction coordinate
	\item The keep-$\{1\}$-in projection sampler $P^{(1)}$, see Definition \ref{def:P-S} below. To simulate one step of $P^{(1)}$, we adopt the following procedure:
	\begin{enumerate}[label=(\roman*)]
		\item Starting from $x \in \mathcal{X}^{(1)}$, draw $v_1 \sim \mathcal{U}(\{-1,+1\})$;
		\item Draw $(y, v_2)\sim P\left((x, v_1),\cdot\right)$;
		\item Update $x \leftarrow y$. 
	\end{enumerate}
	Note that $P^{(1)}$ depends on the lifted sampler $P$, and similar velocity/momentum refreshment is a common technique used in Hamiltonian Monte Carlo, see e.g. \cite[Section $5.3$]{Neal2011HMC}.
\end{itemize}

Comparing $Q$ and $P^{(1)}$, the latter can be understood as a suitably perturbed version of the former with moves that might have a small probability of taking place in $Q$. For instance, with probability $b$, $P^{(1)}$ is able to move from $x$ to $x+v$, in which such proposal move might have a smaller probability in $Q$ than $b$.

Comparing $P$ and $P^{(1)}$, the intuitive rationale for the acceleration effect of $P$ lies in the added memory because of the inclusion of the velocity coordinate. For $P^{(1)}$, such memory seems to be lost as its velocity coordinate is randomized at each step according to $\mathcal{U}(\{-1,+1\})$.

The relationship between $P$ and $P^{(1)}$ can be intuitively interpreted as a Markov chain version of the Rao--Blackwellization. In estimation, Rao-Blackwellization proposes to consider estimators based on conditional expectations and sufficient statistics to improve the original estimators. In our context, $P^{(1)}$ can be understood as a conditional expectation of $P$, see Remark \ref{rk:condite} below. In this lifted sampler example, in $P^{(1)}$ one integrates out or refreshes the effect of the velocity component at each step.

The results established in this paper give that it is favourable to consider $P^{(1)}$ over $P$ for improved mixing: the KL divergence from $\Pi$ to $P$ is at least greater than or equal to that from $\Pi^{(1)}$ to $P^{(1)}$ ($\Pi$ or $\Pi^{(1)}$ are respectively the matrix where each row is $\pi$ or $\pi^{(1)}$), see Corollary \ref{cor:contraction}. In addition, the multiplicative spectral gap of $P^{(1)}$ is at least as good as that of $P$, see Corollary \ref{cor:mspectralgap}.

\subsubsection{Numerical experiments}\label{subsubsec:numlifted}

For reproducibility, the code used in our experiments is available at \url{https://github.com/mchchoi/factorization/tree/main}. We first state the parameters of the experiments:
\begin{itemize}
	\item $\pi^{(1)}(x) \propto 2^{|x|}$. Such bimodal V-shaped target distribution is commonly used to assess the performance of MCMC samplers, see e.g. \cite{DHM00,MZ03}. Notably there are two modes at $\pm n$ respectively.
	\item {\color{black}$n = 15$.}
	\item $M$, the proposal chain, moves from $x$ to $\min\{x+1,n\}$ and $\max\{x-1,-n\}$ with probability $1/2$, and $0$ otherwise.
	\item {\color{black}$a = c = \frac{1}{8}, b = \frac{3}{4}$.}
	\item {\color{black}All samplers are initialized at $15$, the mode on the right, and are simulated for $2,500,000$ steps. The initial velocity of the sampler $P$ is $-1$.}
\end{itemize}

The results are summarized and presented in Figure \ref{fig:Vshape}, Table \ref{tab:first} and \ref{tab:second}.

{\color{black}First, we note that $Q$ does not exhibit mixing: from the traceplot, histogram and empirical mean, it spent most time exploring the basin around the right mode at $15$ and rarely traverse to the left mode at $-15$ in the experiment.}

{\color{black}Second, from the traceplots and histograms we see that both $P$ and $P^{(1)}$ are able to hop between the two modes. One notable difference is that $P$ has more frequent hopping compared with $P^{(1)}$. While the histogram that these two generated are visually similar and arguably $P^{(1)}$ is a better fit than $P$, from Table \ref{tab:first} the empirical distribution generated by $P^{(1)}$ is closer to the ground truth $\pi^{(1)}$ than that generated by $P$. From Table \ref{tab:second} the empirical mean  generated by $P^{(1)}$ is also closer to the ground truth than that generated by $P$, while the empirical second moment of $P^{(1)}$ and $P$ are similar. These two tables seem to suggest that $P^{(1)}$ mixes better than the $x$-coordinate of $P$, thus offering empirical evidence that it is advantageous to use the projection sampler $P^{(1)}$ over either $P$ or $Q$ to sample from $\pi^{(1)}$. As a result, for the purpose of estimating $\pi^{(1)}(g)$ where $g$ is either $g(x) = x$ or $g(x) = x^2$, it is perhaps preferable to use samples generated by $P^{(1)}$ over that of $P$ or $Q$.}
\begin{figure}[htbp]
	\centering
	
	\begin{subfigure}{\textwidth}
		\centering
		\includegraphics[width=0.45\textwidth]{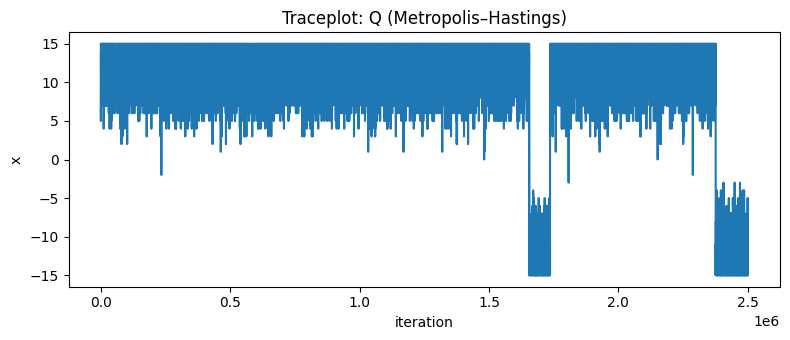}
		\includegraphics[width=0.45\textwidth]{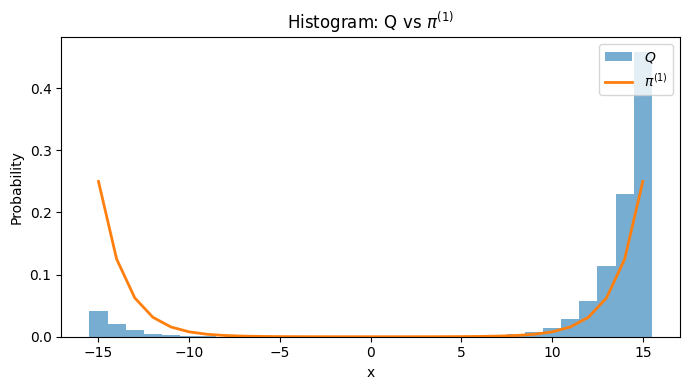}
		\caption{Traceplot and histogram of the trajectories of $Q$.}
	\end{subfigure}
	
	\begin{subfigure}{\textwidth}
		\centering
		\includegraphics[width=0.45\textwidth]{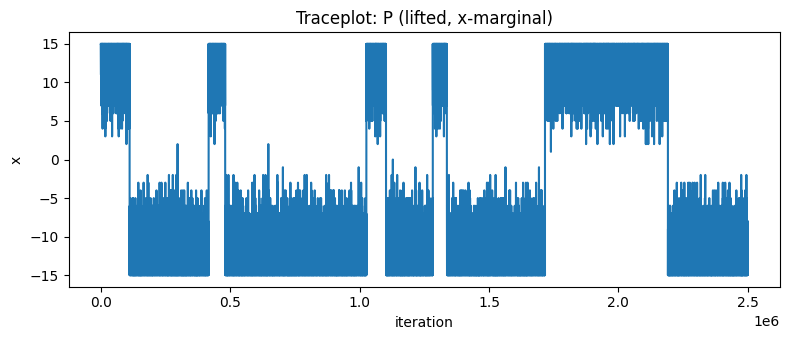}
		\includegraphics[width=0.45\textwidth]{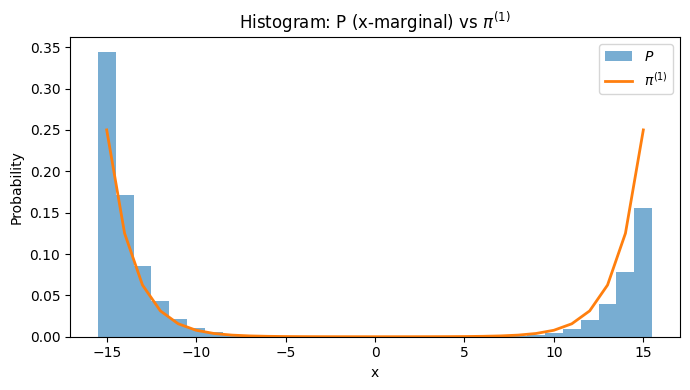}
		\caption{Traceplot and histogram of the trajectories of the $x$-coordinate of $P$.}
	\end{subfigure}
	
	\begin{subfigure}{\textwidth}
		\centering
		\includegraphics[width=0.45\textwidth]{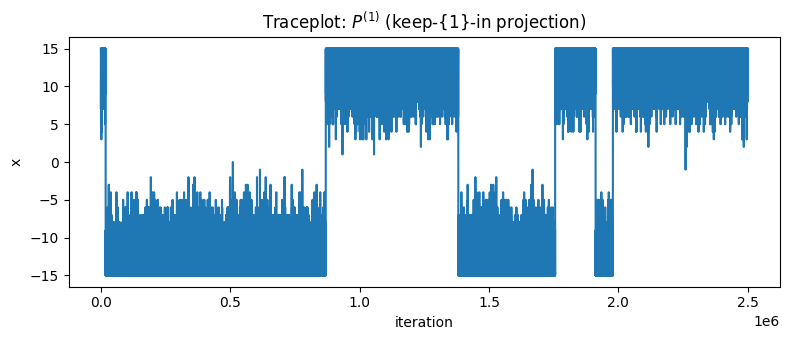}
		\includegraphics[width=0.45\textwidth]{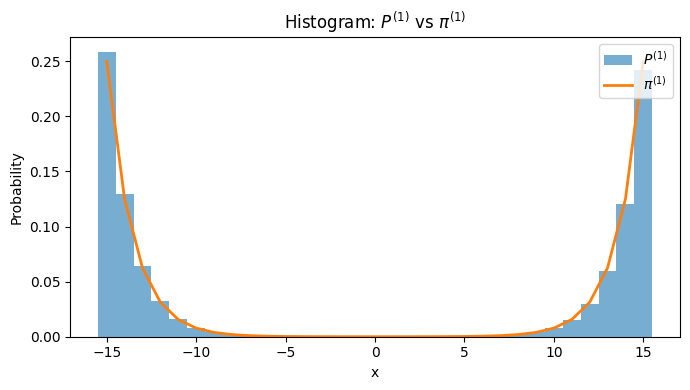}
		\caption{Traceplot and histogram of the trajectories of $P^{(1)}$.}
	\end{subfigure}
	
	\caption{Numerical experiments comparing the three samplers $Q, P, P^{(1)}$ with target distribution being the V-shaped $\pi^{(1)}(x) \propto 2^{|x|}$.}
	\label{fig:Vshape}
\end{figure}

\begin{table}[H]
	\centering
	{\color{black}
	\begin{tabular}{lcc}
		\toprule
		Sampler & $\widetilde{D}_{TV}(\widehat{\pi}^{(1)},\pi^{(1)})$ & $\widetilde{D}_{KL}(\widehat{\pi}^{(1)} \| \pi^{(1)})$ \\
		\midrule
		Q  & 0.42 & 0.41 \\
		$P$ ($x$-only) & 0.19 & 0.07 \\
		$P^{(1)}$ & 0.02 & 0.00 \\
		\bottomrule
	\end{tabular}}
	\caption{Comparison of total variation distance and KL divergence between $\widehat{\pi}^{(1)}$ and the ground truth $\pi^{(1)}$, where $\widehat{\pi}^{(1)}$ is the empirical distribution formed by the trajectories of the samplers. Recall that $\widetilde{D}_{TV}(\mu,\nu):= \tfrac{1}{2}\sum_x |\mu(x) - \nu(x)|$ and $\widetilde{D}_{KL}$ is defined in \eqref{eq:widetildeDKL}.}
	\label{tab:first}
\end{table}

\begin{table}[H]
	\centering
	{\color{black}
	\begin{tabular}{lcc}
		\toprule
		Sampler & Mean & Second moment \\
		\midrule
		Q  & 11.70 & 198.01 \\
		$P$ ($x$-only) & -5.25 & 198.09 \\
		$P^{(1)}$ & -0.48 & 198.14 \\
		Truth $\pi^{(1)}$ & 0 & 198.00 \\
		\bottomrule
	\end{tabular}}
	\caption{Comparison of the first and second moment between the samplers and the ground truth $\pi^{(1)}$.}
	\label{tab:second}
\end{table}

\subsection{Organization of the paper}

The rest of this paper is organized as follows. In Section \ref{sec:basicdef}, we first recall the notion of $f$-divergences between transition matrices and probability measures. We derive a few important properties of these divergences on finite product state spaces, which allow us to define an entropic distance to independence of a given multivariate $P$ in Section \ref{subsec:distanceind}. In Section \ref{subsubsec:larged}, we determine and identify the closest product chain under KL divergence, and present a large deviation principle in this context. We investigate the factorizability of $P$ with respect to partition or cliques of a given graph in Section \ref{subsec:leaveoneout} to \ref{subsec:clique}. In Section \ref{subsec:comparisonfunctionalconstants}, we compare mixing and hitting time parameters between $P$ and its information projections, while in Section \ref{subsec:submodular}, we show that several entropic functions that naturally arise in this paper are in fact submodular. To illustrate the applicability of projection chains, in Section \ref{sec:projectionsamplers} we propose a projection sampler and compare its mixing time with the original swapping algorithm, along with some simple numerical experiments in Section \ref{subsec:num}. {\color{black}As another application of projection chains, in Section \ref{sec:factorfilter} we propose a factored filtering scheme with linear-in-dimension computational cost per step when compared with the exact filter and provide related experiments.}

\section{Distance to independence and factorizability of Markov chains}\label{sec:basicdef}

On a finite state space $\mathcal{X}$, we define $\mathcal{L} = \mathcal{L}(\mathcal{X})$ as the set of transition matrices of discrete-time homogeneous Markov chains. We denote by $\mathcal{P}(\mathcal{X})$ to be the set of probability masses on $\mathcal{X}$. Let $\pi \in \mathcal{P}(\mathcal{X})$ be any given positive probability distribution \textcolor{black}{(i.e. $\pi$ satisfies $\min_x \pi(x) > 0$)}, and denote $\mathcal{L}(\pi)\subset \mathcal{L}$ as the set of $\pi$-reversible transition matrices on $\mathcal{X}$, where a transition matrix $P \in \mathcal{L}$ is said to be $\pi$-reversible if $\pi(x)P(x,y)=\pi(y)P(y,x)$ for all $x,y\in \mathcal{X}$. We also say that $P \in \mathcal{L}$ is $\pi$-stationary if it satisfies $\pi P = \pi$. Suppose that $P$ is $\pi$-stationary, then the $\pi$-dual or the time reversal of $P$, $P^*$, is defined to be
$P^*(x,y) := \frac{\pi(y)}{\pi(x)} P(y,x)$, for all $x,y\in \mathcal{X}$.

First, we give the definition of $f$-divergence of Markov chains and recall that of probability measures.

\begin{definition}[$f$-divergence of Markov chains and of probability measures]\label{def:f-divergence}
	
	Let $f:\mathbb{R}^+ \to \mathbb{R}$ be a convex function with $f(1)=0$. 
	For given $\pi \in \mathcal{P}(\mathcal{X})$ and transition matrices $M, L\in \mathcal{L}$, we define the $f$-divergence from $L$ to $M$ with respect to $\pi$ as 
	\begin{equation}\label{f divergence for matrices}
		D_f^{\pi}(M\|L):=\sum_{x\in \mathcal{X}}\pi(x)\sum_{y\in\mathcal{X}}L(x,y)f\left(\dfrac{M(x,y)}{L(x,y)}\right).
	\end{equation}
	For two probability measures $\mu,\nu \in \mathcal{P}(\Omega)$ with $\Omega$ finite, the $f$-divergence from $\nu$ to $\mu$ is defined to be 
	\begin{equation}\label{f divergence for probability measure}
		\widetilde D_f(\mu\|\nu):= \sum_{x \in \Omega} \nu(x)f\left(\frac{\mu(x)}{\nu(x)}\right),
	\end{equation}
	where we apply the usual convention that $0f(\frac{0}{0}):=0$ and $0f(\frac{a}{0}):= a f^{\prime}(+\infty)$ with $f^{\prime}(+\infty) := \lim_{x \to 0^+} x f(\frac{1}{x})$ for $a > 0$ in the two definitions above. We also adopt the convention that $0 \cdot \infty := 0$.
\end{definition}

In the special case of taking $f(t) = t \ln t$, we recover the KL divergence. In this case we shall write $D_{KL}^{\pi}$ and $\widetilde D_{KL}$ respectively. In particular, when $M,L$ are assumed to be $\pi$-stationary, we write $D(M\|L) := D_{KL}^{\pi}(M\| L)$ which can be interpreted as the KL divergence rate from $L$ to $M$, see \cite{R04}. Notably $f$-divergences of Markov chains have also been proposed for estimating the transition matrix from samples generated from Markov chains, for instance in \cite{HOP18}.

In the sequel, a majority of our focus is devoted to state space that takes on a product form, that is, $\mathcal{X} = \mathcal{X}^{(1)} \times \ldots \times \mathcal{X}^{(d)} =: \bigtimes_{i=1}^d \mathcal{X}^{(i)}$ for $d \in \mathbb{N}$. A transition matrix $P \in \mathcal{L}(\bigtimes_{i=1}^d \mathcal{X}^{(i)})$ is said to be of product form if there exists $M_i \in \mathcal{L}(\mathcal{X}^{(i)})$ for $i \in \llbracket d \rrbracket$ such that $P$ can be expressed as a tensor product of the form
$$P = \otimes_{i=1}^d M_i.$$ 
This notion of product chain has appeared in \cite[Exercise $12.7$]{levin2017markov} and differs from another slightly different ``product-type chain" in \cite{Mathe1998} or ``product chain" introduced in \cite[Section $12.4, 20.4$]{levin2017markov}. Analogously, a probability mass $\mu \in \mathcal{P}(\bigtimes_{i=1}^d \Omega^{(i)})$ is said to be of product form if there exists $\nu_i \in \mathcal{P}(\Omega^{(i)})$ for $i \in \llbracket d \rrbracket$ such that 
$$\mu = \otimes_{i=1}^d \nu_i.$$


\begin{remark}[On mutual information and interaction information]
	We remark that there exists a body of literature on various generalizations of mutual information to the case of $d > 2$ random variables, see for example \cite{GK17} and the references therein.
\end{remark}

Observe that in the special case when $\pi = \delta_x$ with $x = (x^1,\ldots,x^d)$, the Dirac point mass at $x$, we have
$D_f^{\pi}(M \| \otimes_{i=1}^d L_i) = \widetilde D_f(M((x^1,\ldots,x^d),\cdot) \| (\otimes_{i=1}^d L_i)((x^1,\ldots,x^d),\cdot))$. As such, we can interpret $D_f^{\pi}$ as a generalization of $\widetilde D_f$. We also note that $D_f^{\pi}$ can be written as a $\pi$-weighted average of $\widetilde D_f$, since we have
\begin{align*}
	D_f^{\pi}(M \| \otimes_{i=1}^d L_i) &= \sum_{x \in \mathcal{X}} \pi(x) \widetilde D_f(M(x,\cdot) \| (\otimes_{i=1}^d L_i)(x,\cdot)).
\end{align*}

Our next proposition summarizes some fundamental yet useful properties of $D_f^{\pi}$ from a product transition matrix to a given $M$:
\begin{proposition}\label{prop:Ifproperty}
	Denote the state space to be $\mathcal{X} = \bigtimes_{i=1}^d \mathcal{X}^{(i)}$. Let $\pi \in \mathcal{P}(\mathcal{X})$, $M \in \mathcal{L}(\mathcal{X})$ and $M_i, L_i \in \mathcal{L}(\mathcal{X}^{(i)})$ for $i \in \llbracket d \rrbracket$.
	\begin{enumerate}
		\item(Non-negativity)\label{it:Ifnonnegative} 
		\begin{align*}
			D_f^{\pi}(M \| \otimes_{i=1}^d L_i)\geq 0.
		\end{align*}
		Suppose that $\pi$ is a positive probability mass, that is, $\min_{x \in \mathcal{X}} \pi(x) > 0$. Then the equality holds if and only if $M = \otimes_{i=1}^d L_i$.
		
		\item(Convexity)\label{it:Ifconvex}
		Fix $ L_i \in \mathcal{L}(\mathcal{X}^{(i)})$ for $i \in \llbracket d \rrbracket$. The mapping 
		$$\mathcal{L}(\mathcal{X}) \ni M \mapsto D_f^{\pi}(M \| \otimes_{i=1}^d L_i)$$
		is convex in $M$.
		
		\item(Chain rule of KL divergence)\label{it:Ifchain} Let $\pi = \otimes_{i=1}^d \pi^{(i)}$ be a product distribution with $\pi^{(i)} \in \mathcal{P}(\mathcal{X}^{(i)})$. Then we have
		\begin{align*}
			D_{KL}^{\pi}(\otimes_{i=1}^d M_i \| \otimes_{i=1}^d L_i) = \sum_{i=1}^d D_{KL}^{\pi^{(i)}}(M_i \| L_i),
		\end{align*}
		where each $D_{KL}^{\pi^{(i)}}(M_i \| L_i)$ is weighted by $\pi^{(i)}$ for $i \in \llbracket d \rrbracket$.
		
		\item(Bounds of squared Hellinger distance)\label{it:Hellinger} Let $f(t) = (\sqrt{t}-1)^2$ that generates the squared Hellinger distance and $\pi = \otimes_{i=1}^d \pi^{(i)}$ be a product distribution with $\pi^{(i)} \in \mathcal{P}(\mathcal{X}^{(i)})$. We have
		\begin{align*}
			\max_{i \in \llbracket d \rrbracket} D^{\pi^{(i)}}_f (M_i \| L_i) \leq D^{\pi}_f(\otimes_{i=1}^d M_i \| \otimes_{i=1}^d L_i) \leq \sum_{i=1}^d D^{\pi^{(i)}}_f (M_i \| L_i).
		\end{align*}
		
		\item(Bisection property)\label{it:Ifbisection} Suppose that $M$ is $\pi$-stationary and $L_i$ is $\pi^{(i)}$-stationary, where $\pi = \otimes_{i=1}^d \pi^{(i)}$ is a product distribution and $\pi^{(i)} \in \mathcal{P}(\mathcal{X}^{(i)})$ for $i \in \llbracket d \rrbracket$. Then we have
		\begin{align*}
			D_f^{\pi}(M \| \otimes_{i=1}^d L_i) = D_f^{\pi}(M^* \| \otimes_{i=1}^d L^{*}_i).
		\end{align*}
		In particular, if $L_i \in \mathcal{L}(\pi^{(i)})$, then the above leads to 
		\begin{align*}
			D_f^{\pi}(M \| \otimes_{i=1}^d L_i) = D_f^{\pi}(M^* \| \otimes_{i=1}^d L_i).
		\end{align*}
	\end{enumerate}
\end{proposition}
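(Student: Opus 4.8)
The overall approach is to dispatch the five items separately, each reducing to a single structural observation plus a standard inequality or identity, using throughout the rowwise representation $D_f^{\pi}(N\|L)=\sum_{x}\pi(x)\,\widetilde D_f(N(x,\cdot)\|L(x,\cdot))$ recorded just above the proposition, which transfers facts about $f$-divergences of probability masses to transition matrices. For \ref{it:Ifnonnegative}, I would first note the auxiliary fact that $D_f^{\pi}(N\|L)\ge0$ for arbitrary $N,L\in\mathcal L(\mathcal X)$ and positive $\pi$, with equality iff $N=L$: each row divergence satisfies $\widetilde D_f(N(x,\cdot)\|L(x,\cdot))\ge f\!\big(\sum_y L(x,y)\tfrac{N(x,y)}{L(x,y)}\big)=f(1)=0$ by Jensen's inequality, and since $\pi>0$ the whole sum vanishes iff every row divergence does, which forces $N(x,\cdot)=L(x,\cdot)$ by the equality case of Jensen (strict convexity of $f$ at $1$, as is standard); then take $N=M$ and $L=\otimes_{i=1}^dL_i$. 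Item \ref{it:Ifconvex} is immediate from convexity of the perspective function: with $L=\otimes_iL_i$ fixed, each summand $M(x,y)\mapsto L(x,y)f(M(x,y)/L(x,y))$ is convex in the scalar $M(x,y)$ (the convex $f$ composed with an affine map, scaled by $L(x,y)\ge0$; a linear map when $L(x,y)=0$), and a nonnegative combination of convex functions is convex. For \ref{it:Ifchain} I would expand $D_{KL}^{\pi}(\otimes_iM_i\|\otimes_iL_i)$ directly, using that $\ln$ converts $\prod_i M_i(x^i,y^i)/L_i(x^i,y^i)$ into $\sum_i\ln\!\big(M_i(x^i,y^i)/L_i(x^i,y^i)\big)$; pulling the index sum outside, for each $j$ the sums over the coordinates $i\ne j$ collapse because transition rows and the $\pi^{(i)}$ are normalized, leaving $\sum_j D_{KL}^{\pi^{(j)}}(M_j\|L_j)$.

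Item \ref{it:Hellinger} is the one needing an actual idea. I would pass to the Bhattacharyya/affinity coefficient $\rho(\mu,\nu):=\sum_x\sqrt{\mu(x)\nu(x)}\in[0,1]$, for which $\widetilde D_f(\mu\|\nu)=2(1-\rho(\mu,\nu))$ when $f(t)=(\sqrt t-1)^2$, and which is multiplicative over product measures, so $\rho\big((\otimes_iM_i)(x,\cdot),(\otimes_iL_i)(x,\cdot)\big)=\prod_i r_i(x^i)$ with $r_i(x^i):=\rho(M_i(x^i,\cdot),L_i(x^i,\cdot))$. Averaging against $\pi=\otimes_i\pi^{(i)}$, the average of the product factorizes into the product of the averages, and writing $a_i:=\tfrac12 D_f^{\pi^{(i)}}(M_i\|L_i)\in[0,1]$ one gets the closed form
\[
D_f^{\pi}\big(\otimes_iM_i\,\big\|\,\otimes_iL_i\big)=2\Big(1-\prod_{i=1}^d(1-a_i)\Big).
\]
Both bounds then follow from the elementary inequalities $\max_i a_i\le 1-\prod_{i=1}^d(1-a_i)\le\sum_{i=1}^d a_i$ valid for $a_i\in[0,1]$: the upper bound is the Weierstrass product inequality, and the lower bound holds because $\prod_i(1-a_i)\le 1-a_j$ for each $j$.

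For \ref{it:Ifbisection}, since $\pi=\otimes_i\pi^{(i)}$ and each $L_i$ is $\pi^{(i)}$-stationary, $L:=\otimes_iL_i$ is $\pi$-stationary and, checking the defining formula of the reversal entrywise, $(\otimes_iL_i)^*=\otimes_iL_i^*$ (the product of the $\pi^{(i)}$-ratios is exactly the $\pi$-ratio). The reversal identities give $\pi(x)L^*(x,y)=\pi(y)L(y,x)$ and $M^*(x,y)/L^*(x,y)=M(y,x)/L(y,x)$, hence
\[
D_f^{\pi}(M^*\|L^*)=\sum_{x,y}\pi(x)L^*(x,y)f\!\left(\frac{M^*(x,y)}{L^*(x,y)}\right)=\sum_{x,y}\pi(y)L(y,x)f\!\left(\frac{M(y,x)}{L(y,x)}\right)=D_f^{\pi}(M\|L)
\]
after relabelling $x\leftrightarrow y$; zero entries match on both sides under the stated conventions since $\pi>0$ makes $L^*(x,y)=0\iff L(y,x)=0$. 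The displayed ``in particular'' case is just $L_i^*=L_i$ for $\pi^{(i)}$-reversible $L_i$.

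The only non-mechanical point is \ref{it:Hellinger}: one has to recognize that the squared Hellinger divergence linearizes through the affinity coefficient and that the affinity is multiplicative over products, which is precisely what collapses the tensor structure to the form $1-\prod_i(1-a_i)$; after that the two bounds are one-line estimates. A minor but pervasive source of friction in items \ref{it:Ifnonnegative}, \ref{it:Ifconvex} and \ref{it:Ifbisection} is keeping the conventions $0f(0/0)=0$ and $0f(a/0)=af'(+\infty)$ consistent wherever an entry of $L$ or some $L_i$ vanishes, so that every identity above is an equality of extended-real sums rather than only on the support.
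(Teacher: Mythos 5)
Your proof is correct, and on the first three items (non-negativity, convexity, chain rule) it runs parallel to the paper's argument of viewing $D_f^{\pi}$ as a $\pi$-weighted sum of row divergences $\widetilde D_f(M(x,\cdot)\|L(x,\cdot))$; the only difference is that you prove directly (Jensen with its equality case, and the entrywise perspective-function argument) what the paper dispatches by citation. The genuinely different routes are in the Hellinger and bisection items. For the Hellinger bounds the paper argues row by row, invoking subadditivity of the squared Hellinger distance over product measures for the upper bound and a coordinatewise max lower bound for the lower bound, and then averages against $\pi$; you instead linearize through the Bhattacharyya affinity, use its multiplicativity over products together with the factorization of the average under the product form of $\pi$, and obtain the exact identity $D_f^{\pi}(\otimes_i M_i\|\otimes_i L_i)=2\bigl(1-\prod_{i=1}^d(1-a_i)\bigr)$ with $a_i=\tfrac12 D_f^{\pi^{(i)}}(M_i\|L_i)\in[0,1]$, from which both stated bounds follow by one-line estimates. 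This buys a self-contained proof and in fact a sharper statement (a closed form rather than a two-sided bound), at the cost of using the product structure of $\pi$ in the averaging step, which the proposition assumes anyway. For the bisection property the paper omits the argument and points to an external reference; your computation via $(\otimes_i L_i)^*=\otimes_i L_i^*$, the identities $\pi(x)L^*(x,y)=\pi(y)L(y,x)$ and $M^*(x,y)/L^*(x,y)=M(y,x)/L(y,x)$, and the relabelling $x\leftrightarrow y$ is exactly the intended proof and is valid (note it implicitly uses positivity of $\pi$ so that the time reversals are defined, as you observe). Two small bookkeeping points: in the chain rule the paper treats separately the case where some $L_i(x^i,y^i)=0<M_i(x^i,y^i)$, where both sides equal $+\infty$, and your expansion should record that case explicitly; and the ``only if'' direction of the equality case in non-negativity needs $f$ strictly convex at $1$, a standing assumption you correctly flag and which the paper also relies on implicitly.
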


\begin{remark}
	The Hellinger distance is commonly used to assess the convergence to equilibrium of product Markov chains, see for example \cite{CK18,levin2017markov}.
\end{remark}

\begin{remark}
	Note that Proposition \ref{prop:Ifproperty} item \eqref{it:Ifnonnegative}, \eqref{it:Ifconvex} and \eqref{it:Ifbisection} also hold when the second argument is not a product transition matrix.
\end{remark}

\begin{proof}
	For brevity, throughout this proof we write $x = (x^1,\ldots,x^d)$ and $y=(y^1,\ldots,y^d)$. We first prove item \eqref{it:Ifnonnegative}. Since $D_f^{\pi}$ is a $f$-divergence from $\otimes_{i=1}^d L_i$ to $M$ with respect to $\pi$, it is non-negative according to \cite{WC23}. Since $\pi$ is a positive probability mass, equality holds if and only if for all $x \in \mathcal{X}$ we have $\widetilde D_f(M(x,\cdot) ; (\otimes_{i=1}^d L_i)(x,\cdot)) = 0$ if and only if $M = \otimes_{i=1}^d L_i$ (see for instance \cite{polyanskiy2022information}).
	
	Next, we prove item \eqref{it:Ifconvex}. We see that
	\begin{align*}
		D_f^{\pi}(M \| \otimes_{i=1}^d L_i) &= \sum_{x \in \mathcal{X}} \pi(x) \widetilde D_f(M(x,\cdot) \| (\otimes_{i=1}^d L_i)(x,\cdot)).
	\end{align*}
	Since $M(x,\cdot) \mapsto \widetilde D_f(M(x,\cdot) \| (\otimes_{i=1}^d L_i)(x,\cdot))$ is convex and $D_f^{\pi}(M \| \otimes_{i=1}^d L_i)$ is a $\pi$-weighted sum of convex functions, it is convex.
	
	We proceed to prove item \eqref{it:Ifchain}. First, we consider the case where $L_i(x^i,y^i) > 0$ for all $i$ or $M_i(x^i,y^i) = 0$ whenever $L_i(x^i,y^i) = 0$, i.e. $M_i(x^i,\cdot) \ll L_i(x^i,\cdot)$ for all $i$. We see that
	\begin{align*}
		D_{KL}^\pi(\otimes_{i=1}^d M_i \| \otimes_{i=1}^d L_i) 
		&= \sum_{x,y \in \mathcal{X}} \pi(x) \prod_{j=1}^d M_j(x^j,y^j) \sum_{i=1}^d  \ln \left(\dfrac{M_i(x^i,y^i)}{L_i(x^i,y^i)}\right)\\
		&= \sum_{i=1}^d \sum_{x,y \in \mathcal{X}} \pi(x) \prod_{j=1}^d M_j(x^j,y^j)   \ln \left(\dfrac{M_i(x^i,y^i)}{L_i(x^i,y^i)}\right)\\
		&= \sum_{i=1}^d \sum_{x^i,y^i \in \mathcal{X}^{(i)}} \pi^{(i)}(x^i) M_i(x^i,y^i)   \ln \left(\dfrac{M_i(x^i,y^i)}{L_i(x^i,y^i)}\right) \\
		&= \sum_{i=1}^d D_{KL}^{\pi^{(i)}}(M_i \| L_i).
	\end{align*}
	Next, we consider the case where there exists $i$ such that $M_i(x^i,y^i) > 0$ yet $L_i(x^i,y^i) = 0$. Since we take $f(t) = t \ln t$ in KL divergence, we have $f^{\prime}(\infty) = \infty$, and hence both sides are $\infty$ in item \eqref{it:Ifchain}.
	
	Now, we prove item \eqref{it:Hellinger}. For the upper bound, we note that
	\begin{align*}
		D^{\pi}_f(\otimes_{i=1}^d M_i \| \otimes_{i=1}^d L_i) &= \sum_{x \in \mathcal{X}} \pi(x) \widetilde D_f( (\otimes_{i=1}^d M_i)(x,\cdot) \|  (\otimes_{i=1}^d L_i)(x,\cdot)) \\
		&\leq \sum_{x \in \mathcal{X}} \pi(x) \sum_{i=1}^d \widetilde D_f( M_i(x^i,\cdot) \|  L_i(x^i,\cdot)) \\ 
		&= \sum_{i=1}^d \sum_{x^i \in \mathcal{X}^{(i)}} \pi^{(i)}(x^i)  \widetilde D_f( M_i(x^i,\cdot) \|  L_i(x^i,\cdot)) \\
		&= \sum_{i=1}^d D^{\pi^{(i)}}_f (M_i \| L_i),
	\end{align*}
	where we apply \cite[Lemma $20.9$]{levin2017markov} in the inequality. On the other hand, the lower bound can be seen via
	\begin{align*}
		D^{\pi}_f(\otimes_{i=1}^d M_i \| \otimes_{i=1}^d L_i) &= \sum_{x \in \mathcal{X}} \pi(x) \widetilde D_f( (\otimes_{i=1}^d M_i)(x,\cdot) \|  (\otimes_{i=1}^d L_i)(x,\cdot)) \\
		&\geq \sum_{x \in \mathcal{X}} \pi(x) \max_{i \in \llbracket d \rrbracket} \widetilde D_f( M_i(x^i,\cdot) \| L_i(x^i,\cdot)) \\ 
		&\geq \sum_{x \in \mathcal{X}} \pi(x) \widetilde D_f( M_i(x^i,\cdot) \|  L_i(x^i,\cdot)) \\ 
		&= D^{\pi^{(i)}}_f (M_i \| L_i),
	\end{align*}
	where the first inequality follows from \cite[Proposition $2.3$]{CK18}. The desired result follows by taking maximum over $i \in \llbracket d \rrbracket$.
	
	Finally, for item \eqref{it:Ifbisection}, we note that the proof is similar to \cite[Section IIIA]{CW24} and is therefore omitted.
\end{proof}

\subsection{Distance to independence and the closest product chain}\label{subsec:distanceind}

Given a Markov chain with transition matrix $P$ on a finite product state space, how far away is it from being a product chain? In other words, what is the (information-theoretic) ``distance", in a broad sense, to independence? One possible way to measure this distance is by means of projection. Throughout this section, unless otherwise specified we shall consider a product state space of the form $\mathcal{X} = \bigtimes_{i=1}^d \mathcal{X}^{(i)}$. We now define

\begin{definition}[Distance to independence of $P$ with respect to $D_{f}^{\pi}$]
	Given $P \in \mathcal{L}(\mathcal{X})$, we define the distance to independence of $P$ with respect to $D_{f}^{\pi}$ to be
	\begin{align}\label{def:IfP}
		\mathbb{I}_f^{\pi}(P) := \min_{L_i \in \mathcal{L}(\mathcal{X}^{(i)}),\, \forall i \in \llbracket d \rrbracket} D_f^{\pi}(P \| \otimes_{i=1}^d L_i).
	\end{align}
	In particular, when we take $f(t) = t \ln t$ that generates the KL divergence, we write $\mathbb{I}^{\pi}(P) := \mathbb{I}_f^{\pi}(P)$ in this case. If $P$ is $\pi$-stationary, then we also write $\mathbb{I}(P) = \mathbb{I}^{\pi}(P)$.
\end{definition}

Note that since $L \mapsto D_f^{\pi}(M \| L)$ is continuous and the set $\bigtimes_{i=1}^d \mathcal{L}(\mathcal{X}^{(i)})$ is compact, the minimization problem \eqref{def:IfP} is always attained. The next result states that this distance to independence of $P$ is zero if and only if $P$ is a product chain under suitable assumptions on $\pi$. This is analogous to the property that if two random variables are independent, then their correlation is zero.

\begin{proposition}
	Assume the same setting as in Proposition \ref{prop:Ifproperty}. Let $\pi \in \mathcal{P}(\mathcal{X})$ and $P \in \mathcal{L}(\mathcal{X})$. We have
	\begin{align*}
		\mathbb{I}_f^{\pi}(P) \geq 0.
	\end{align*}
	Suppose that $\pi$ is a positive probability mass, that is, $\min_{x \in \mathcal{X}} \pi(x) > 0$. Then the equality holds if and only if $P$ is a product chain.
\end{proposition}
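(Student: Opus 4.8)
The plan is to leverage the non-negativity and equality characterization already established in Proposition~\ref{prop:Ifproperty}\eqref{it:Ifnonnegative}, since $\mathbb{I}_f^\pi(P)$ is by definition a minimum of quantities of the form $D_f^\pi(P \| \otimes_{i=1}^d L_i)$. For the inequality $\mathbb{I}_f^\pi(P) \geq 0$: every term $D_f^\pi(P \| \otimes_{i=1}^d L_i)$ in the minimization is non-negative by Proposition~\ref{prop:Ifproperty}\eqref{it:Ifnonnegative}, hence so is their minimum; note this requires no positivity assumption on $\pi$, matching the statement. Since the minimum is attained (as observed right after the definition, by continuity of $L \mapsto D_f^\pi(M\|L)$ and compactness of $\bigtimes_{i=1}^d \mathcal{L}(\mathcal{X}^{(i)})$), there exist minimizers $L_1^\star, \ldots, L_d^\star$ with $\mathbb{I}_f^\pi(P) = D_f^\pi(P \| \otimes_{i=1}^d L_i^\star)$.

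For the equality characterization, assume now $\pi$ is strictly positive. The ``if'' direction is immediate: if $P = \otimes_{i=1}^d M_i$ is a product chain, then choosing $L_i = M_i$ gives $D_f^\pi(P \| \otimes_{i=1}^d L_i) = 0$ by Proposition~\ref{prop:Ifproperty}\eqref{it:Ifnonnegative} (the equality case), so the minimum $\mathbb{I}_f^\pi(P)$ is $0$. Conversely, suppose $\mathbb{I}_f^\pi(P) = 0$. Using the attained minimizers $L_i^\star$, we have $D_f^\pi(P \| \otimes_{i=1}^d L_i^\star) = 0$, and since $\pi$ is strictly positive, Proposition~\ref{prop:Ifproperty}\eqref{it:Ifnonnegative} forces $P = \otimes_{i=1}^d L_i^\star$, which is precisely the assertion that $P$ is a product chain.

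The argument is essentially a one-line reduction to the previously proven proposition, so there is no substantial obstacle; the only point requiring a modicum of care is to invoke the attainment of the minimum (already noted in the text) so that the equality case of Proposition~\ref{prop:Ifproperty}\eqref{it:Ifnonnegative} can be applied to an actual minimizer rather than to an infimizing sequence. One should also be mindful to state the inequality $\mathbb{I}_f^\pi(P) \geq 0$ without the positivity hypothesis (it holds unconditionally) and to introduce the positivity of $\pi$ only for the equality characterization, consistent with how the proposition is phrased.
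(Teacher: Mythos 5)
Your proposal is correct and follows essentially the same route as the paper's proof: non-negativity via Proposition \ref{prop:Ifproperty}, the trivial direction by plugging in the factors of a product chain, and the converse by noting the minimum in \eqref{def:IfP} is attained so the equality case of Proposition \ref{prop:Ifproperty} (under positivity of $\pi$) applies to an actual minimizer. Your added care about where the positivity hypothesis is needed matches the paper's statement, so nothing further is required.
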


\begin{proof}
	The non-negativity is clear from Proposition \ref{prop:Ifproperty}. If $P$ is a product chain, then clearly $\mathbb{I}^\pi_f(P) = 0$. For the other direction, if $\mathbb{I}^\pi_f(P) = 0$, then $D_f^{\pi}(P \| \otimes_{i=1}^d L_i) = 0$ for some $L_i$ since the minimization in \eqref{def:IfP} is exactly attained. By Proposition \ref{prop:Ifproperty}, $P = \otimes_{i=1}^d L_i$.
\end{proof}

Let us now recall the notion of edge measure of a Markov chain (see e.g. \cite[equation (7.5)]{levin2017markov}). Let $\pi \in \mathcal{P}(\mathcal{X})$ and $P \in \mathcal{L}(\mathcal{X})$. The edge measure $\pi \boxtimes P \in \mathcal{P}(\mathcal{X} \times \mathcal{X})$ is defined to be, for $x,y \in \mathcal{X}$, $$(\pi \boxtimes P) (x,y) := \pi(x) P(x,y).$$ Note that this edge measure encodes the probability of observing a consecutive pair generated from the chain with transition matrix $P$ starting from the distribution $\pi$.

We proceed to define the $i$th marginal transition matrix of $P$ with respect to $\pi$:

\begin{definition}[$P^{(i)}_{\pi}$: the $i$th marginal transition matrix of $P$ with respect to $\pi$]\label{def:Pi}
	Assume the same setting as in Proposition \ref{prop:Ifproperty}. Let $\pi \in \mathcal{P}(\mathcal{X})$ be a positive probability mass, $P \in \mathcal{L}(\mathcal{X})$ and $i \in \llbracket d \rrbracket$. For any $(x^i,y^i) \in \mathcal{X}^{(i)} \times \mathcal{X}^{(i)}$, we define
	\begin{align*}
		P^{(i)}_\pi(x^i,y^i) &:= \dfrac{\sum_{j=1;~j \neq i}^d \sum_{(x^j,y^j) \in \mathcal{X}^{(j)} \times \mathcal{X}^{(j)}} \pi(x^1,\ldots,x^d) P((x^1,\ldots,x^d),(y^1,\ldots,y^d))}{\sum_{j=1;~j \neq i}^d\sum_{x^j \in \mathcal{X}^{(j)}}\pi(x^1,\ldots,x^d)} \\
		&= \dfrac{\sum_{j=1;~j \neq i}^d \sum_{(x^j,y^j) \in \mathcal{X}^{(j)} \times \mathcal{X}^{(j)}} (\pi \boxtimes P)((x^1,\ldots,x^d),(y^1,\ldots,y^d))}{\pi^{(i)}(x^i)},
	\end{align*}
	where $\pi^{(i)}$ is the $i$th marginal probability mass of $\pi$. Note that $P^{(i)}_{\pi} \in \mathcal{L}(\mathcal{X}^{(i)})$. When $P$ is $\pi$-stationary, we omit the subscript and write $P^{(i)}$.
\end{definition}

First, we note that $P^{(i)}_\pi$ can be understood as a special case of the keep-$S$-in transition matrix to be introduced in Definition \ref{def:P-S} below, which is a further special case of various notions of ``projection chains" investigated in \cite{JSTV04,AF02,CLP99}.
Second, we see that if $P$ is $\pi$-stationary, then $P^{(i)}_{\pi}$ is $\pi^{(i)}$-stationary. Similarly, if $P$ is $\pi$-reversible, then $P^{(i)}_{\pi}$ is $\pi^{(i)}$-reversible. As a result, $\otimes_{i=1}^d P^{(i)}_{\pi}$ is thus $\otimes_{i=1}^d \pi^{(i)}$-stationary, and hence in general $\otimes_{i=1}^d P^{(i)}_{\pi}$ is not $\pi$-stationary. In the case where $\pi = \otimes_{i=1}^{d} \pi^{(i)}$ is a product stationary distribution, then $\otimes_{i=1}^d P^{(i)}_{\pi}$ is $\pi$-stationary. A generalization of the above discussions can be found in Proposition \ref{prop:PSergodic} below.


For a concrete example of $P_{\pi}^{(i)}$, we point to the example of the swapping algorithm where we calculate explicitly the marginal transition matrices in Section \ref{sec:projectionsamplers}.

In our next result, we state that under the KL divergence and positivity of $\pi$, a Pythagorean identity holds and it implies that the product chain with transition matrix $\otimes_{i=1}^d P^{(i)}_\pi$ is the unique closest product chain to $P$:

\begin{theorem}\label{thm:closest}
	Assume the same setting as in Proposition \ref{prop:Ifproperty}. Let $\pi \in \mathcal{P}(\mathcal{X})$, $P \in \mathcal{L}(\mathcal{X})$ and $L_i \in \mathcal{L}(\mathcal{X}^{(i)})$ for $i \in \llbracket d \rrbracket$. We then have
	\begin{enumerate}
		\item(Pythagorean identity of $D^{\pi}_{KL}$)\label{it:Py}
		Let $\pi$ be a positive probability mass. We have
		\begin{align*}
			D^{\pi}_{KL}(P \| \otimes_{i=1}^d L_i) 
			&= D_{KL}^{\pi}(P \| \otimes_{i=1}^d P^{(i)}_{\pi}) + D_{KL}^{\pi}(\otimes_{i=1}^d P^{(i)}_\pi \| \otimes_{i=1}^d L_i) \\
			&= D^{\pi}_{KL}(P \| \otimes_{i=1}^d P^{(i)}_{\pi}) + \sum_{i=1}^d D^{\pi^{(i)}}_{KL}(P^{(i)}_{\pi} \| L_i),
		\end{align*}
		where each $D_{KL}^{\pi^{(i)}}(P^{(i)}_\pi \| L_i)$ is weighted by $\pi^{(i)}$, the $i$th marginal distribution of $\pi$. In particular, the unique minimizer that solves \eqref{def:IfP} is given by $\otimes_{i=1}^d P^{(i)}_\pi$, that is,
		\begin{align*}
			\mathbb{I}^{\pi}(P) = D^{\pi}_{KL}(P \| \otimes_{i=1}^d P^{(i)}_\pi).
		\end{align*}
		
		\item(Bisection property)\label{it:bisection} Suppose that $P$ is $\pi$-stationary, where $\pi = \otimes_{i=1}^d \pi^{(i)}$ is a product distribution and $\pi^{(i)} \in \mathcal{P}(\mathcal{X}^{(i)})$ for $i \in \llbracket d \rrbracket$. We have
		\begin{align*}
			\mathbb{I}^{\pi}(P) = \mathbb{I}^{\pi}(P^*).
		\end{align*}
		In other words, the distance to independence of $P$ with respect to $D_{KL}^{\pi}$ and that of its time-reversal $P^*$ is the same.
	\end{enumerate}
	
\end{theorem}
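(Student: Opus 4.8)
The plan is to establish item \eqref{it:Py} by a direct computation that splits the logarithm appearing in $D^\pi_{KL}(P\|\otimes_{i=1}^d L_i)$ around the reference product chain $\otimes_{i=1}^d P^{(i)}_\pi$, reading off the cross term as $\sum_i D^{\pi^{(i)}}_{KL}(P^{(i)}_\pi\|L_i)$ by marginalizing over the remaining coordinates; the uniqueness statement and the formula for $\mathbb{I}^\pi(P)$ are then immediate from non-negativity and strict positivity of the $\pi^{(i)}$. Item \eqref{it:bisection} I would deduce from item \eqref{it:Py} together with the bisection property of $D^\pi_f$ already recorded in Proposition \ref{prop:Ifproperty}\eqref{it:Ifbisection}, after checking that marginalization commutes with time reversal.

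For item \eqref{it:Py}, write $x=(x^1,\dots,x^d)$, $y=(y^1,\dots,y^d)$ and $x^{-i}=(x^j)_{j\ne i}$, and assume first that $P(x,\cdot)\ll(\otimes_i L_i)(x,\cdot)$ for every $x$; the complementary case is handled exactly as in the proof of Proposition \ref{prop:Ifproperty}\eqref{it:Ifchain}, since $L_i(x^i,y^i)=0$ with $P^{(i)}_\pi(x^i,y^i)>0$ forces (using $\pi>0$) some $P(x,y)>0$ with $(\otimes_i L_i)(x,y)=0$, making both sides $+\infty$; here one uses that $D^\pi_{KL}(P\|\otimes_i P^{(i)}_\pi)$ is always finite because $\pi(x)P(x,y)\le\pi^{(i)}(x^i)P^{(i)}_\pi(x^i,y^i)$ gives $P\ll\otimes_i P^{(i)}_\pi$. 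Now from $\ln\frac{P(x,y)}{\prod_i L_i(x^i,y^i)}=\ln\frac{P(x,y)}{\prod_i P^{(i)}_\pi(x^i,y^i)}+\sum_{i=1}^d\ln\frac{P^{(i)}_\pi(x^i,y^i)}{L_i(x^i,y^i)}$, multiplying by $\pi(x)P(x,y)$ and summing over $x,y$: the first piece is $D^\pi_{KL}(P\|\otimes_i P^{(i)}_\pi)$, and since for fixed $i$ the factor $\ln\frac{P^{(i)}_\pi(x^i,y^i)}{L_i(x^i,y^i)}$ depends only on $(x^i,y^i)$, the $i$-th cross term equals $\sum_{x^i,y^i}\bigl(\sum_{x^{-i},y^{-i}}\pi(x)P(x,y)\bigr)\ln\frac{P^{(i)}_\pi(x^i,y^i)}{L_i(x^i,y^i)}$, whose inner bracket is $\pi^{(i)}(x^i)P^{(i)}_\pi(x^i,y^i)$ by Definition \ref{def:Pi}; this is precisely $D^{\pi^{(i)}}_{KL}(P^{(i)}_\pi\|L_i)$, giving the second displayed equality. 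The first displayed equality follows by the same expansion applied to $D^\pi_{KL}(\otimes_i P^{(i)}_\pi\|\otimes_i L_i)$ directly, where summing out $y^{-i}$ uses $\sum_{y^j}P^{(j)}_\pi(x^j,y^j)=1$ and summing out $x^{-i}$ uses $\sum_{x^{-i}}\pi(x)=\pi^{(i)}(x^i)$. Finally, each $D^{\pi^{(i)}}_{KL}(P^{(i)}_\pi\|L_i)\ge 0$, and since $\pi^{(i)}$ is positive it vanishes iff $L_i=P^{(i)}_\pi$ by Proposition \ref{prop:Ifproperty}\eqref{it:Ifnonnegative} applied with $d=1$; hence $D^\pi_{KL}(P\|\otimes_i L_i)\ge D^\pi_{KL}(P\|\otimes_i P^{(i)}_\pi)$ with equality iff $L_i=P^{(i)}_\pi$ for all $i$, so $\otimes_i P^{(i)}_\pi$ is the unique minimizer in \eqref{def:IfP} and $\mathbb{I}^\pi(P)=D^\pi_{KL}(P\|\otimes_i P^{(i)}_\pi)$.

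For item \eqref{it:bisection}, I would first record the commutation identity $(P^*)^{(i)}_\pi=(P^{(i)}_\pi)^*$: substituting $P^*(x,y)=\frac{\pi(y)}{\pi(x)}P(y,x)$ into Definition \ref{def:Pi} and cancelling $\pi(x^i,x^{-i})$ gives $(P^*)^{(i)}_\pi(x^i,y^i)=\frac{1}{\pi^{(i)}(x^i)}\sum_{x^{-i},y^{-i}}\pi(y)P(y,x)=\frac{\pi^{(i)}(y^i)}{\pi^{(i)}(x^i)}P^{(i)}_\pi(y^i,x^i)$, which is exactly $(P^{(i)}_\pi)^*(x^i,y^i)$, the dual being well defined since $P$ $\pi$-stationary implies $P^{(i)}_\pi$ is $\pi^{(i)}$-stationary. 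Then, applying item \eqref{it:Py} to both $P$ and $P^*$, $\mathbb{I}^\pi(P)=D^\pi_{KL}(P\|\otimes_i P^{(i)}_\pi)$ and $\mathbb{I}^\pi(P^*)=D^\pi_{KL}(P^*\|\otimes_i(P^{(i)}_\pi)^*)$; since $P$ is $\pi$-stationary, each $P^{(i)}_\pi$ is $\pi^{(i)}$-stationary, and $\pi=\otimes_i\pi^{(i)}$, Proposition \ref{prop:Ifproperty}\eqref{it:Ifbisection} with $f(t)=t\ln t$ yields $D^\pi_{KL}(P\|\otimes_i P^{(i)}_\pi)=D^\pi_{KL}(P^*\|\otimes_i(P^{(i)}_\pi)^*)$, so the two quantities coincide.

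The argument is essentially bookkeeping: the only place needing genuine care is keeping the product-coordinate Fubini manipulations straight in the cross term and in the parallel expansion of $D^\pi_{KL}(\otimes_i P^{(i)}_\pi\|\otimes_i L_i)$, together with the convention and absolute-continuity edge cases. The commutation identity $(P^*)^{(i)}_\pi=(P^{(i)}_\pi)^*$ used for item \eqref{it:bisection} is a one-line cancellation, and I do not anticipate any substantive obstacle beyond these routine points.
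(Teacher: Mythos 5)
Your proposal is correct and follows essentially the same route as the paper: split the logarithm around $\otimes_{i=1}^d P^{(i)}_\pi$, marginalize the cross term to get $\sum_i D^{\pi^{(i)}}_{KL}(P^{(i)}_\pi\|L_i)$, conclude uniqueness from Proposition \ref{prop:Ifproperty}\eqref{it:Ifnonnegative}, and obtain the bisection property from item \eqref{it:Py} plus Proposition \ref{prop:Ifproperty}\eqref{it:Ifbisection} together with $(P^*)^{(i)}_\pi=(P^{(i)}_\pi)^*$. Your treatment is if anything slightly more explicit than the paper's (the commutation identity and the infinite-divergence edge case are spelled out rather than asserted), but there is no substantive difference in approach.
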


\begin{remark}[Distance to independence as KL divergence rate from the closest product chain of $P$ to $P$]
	Suppose that $P$ is $\pi$-stationary, and $(\mathbf{X}_n)_{n \in \mathbb{N}} = (X^1_n,X^2_n,\ldots,X^d_n)_{n \in \mathbb{N}}$ (resp.~$(\mathbf{Y}_n)_{n \in \mathbb{N}} = (Y^1_n,Y^2_n,\ldots,Y^d_n)_{n \in \mathbb{N}}$) is the discrete-time homogeneous Markov chain with transition matrix $P$ (resp.~$\otimes_{i=1}^d P^{(i)}_\pi$). Then, the distance to independence of $P$ can be written as
	\begin{align*}
		\mathbb{I}(P) = D(P \| \otimes_{i=1}^d P^{(i)}_\pi) = \lim_{n \to \infty} \dfrac{1}{n} \widetilde{D}_{KL}(\mu_n \| \nu_n),
	\end{align*}
	where $\mathbf{X}_n \sim \mu_n, \mathbf{Y}_n \sim \nu_n$ and the right hand side can be interpreted as the KL divergence rate from the closest product chain to $P$. The rightmost expression in the equality above is also known as the mutual information rate \cite{CBM18,HGG98}.
\end{remark}

\begin{proof}
	We first prove item \eqref{it:Py}. We first consider the case where there exists $x,y \in \mathcal{X}$ such that $P(x,y) > 0$ yet $(\otimes_{i=1}^d L_i)(x,y) = 0$. This implies that $P^{(i)}_\pi(x^i,y^i) > 0$ for all $i$. Thus, both $D_{KL}^{\pi}(P \| \otimes_{i=1}^d L_i) = D_{KL}^{\pi^{(i)}}(P^{(i)}_\pi \| L_i) = +\infty$ and the identity holds. Next, we consider the case with $P(x,\cdot) \ll (\otimes_{i=1}^d L^{(i)})(x,\cdot)$ for all $x$. We see that
	\begin{align*}
		D_{KL}^{\pi}(P \| \otimes_{i=1}^d L_i) &= D_{KL}^{\pi}(P \| \otimes_{i=1}^d P^{(i)}_\pi) + \sum_{x,y} \pi(x) P(x,y) \ln\left(\dfrac{(\otimes_{i=1}^d P^{(i)}_\pi)(x,y)}{(\otimes_{i=1}^d L_i)(x,y)}\right) \\
		&= D_{KL}^{\pi}(P \| \otimes_{i=1}^d P^{(i)}_\pi) + \sum_{i=1}^d \sum_{(x^i,y^i)}  \pi^{(i)}(x^i) \sum_{(x^j,y^j);~j\neq i} \dfrac{\pi(x) P(x,y)}{\pi^{(i)}(x^i)} \ln\left(\dfrac{P^{(i)}_\pi(x^i,y^i)}{L_i(x^i,y^i)}\right) \\
		&= D_{KL}^{\pi}(P \| \otimes_{i=1}^d P^{(i)}_\pi) + \sum_{i=1}^d \sum_{(x^i,y^i)}  \pi^{(i)}(x^i) P^{(i)}_\pi(x^i,y^i) \ln\left(\dfrac{P^{(i)}_\pi(x^i,y^i)}{L_i(x^i,y^i)}\right) \\
		&= D_{KL}^{\pi}(P \| \otimes_{i=1}^d P^{(i)}_\pi) + \sum_{i=1}^d D_{KL}^{\pi^{(i)}}(P^{(i)}_\pi \| L_i).
	\end{align*}
	In view of Proposition \ref{prop:Ifproperty}, 
	$$D_{KL}^{\pi}(P \| \otimes_{i=1}^d L_i) \geq D_{KL}^{\pi}(P \| \otimes_{i=1}^d P^{(i)}_\pi)$$
	and equality holds if and only if $D_{KL}^{\pi}(P^{(i)}_\pi \| L_i) = 0$ for all $i$ if and only if $P^{(i)}_\pi = L_i$ for all $i$.
	
	Next, we prove item \eqref{it:bisection}. First we see that $P^{(i)}_\pi$ is $\pi^{(i)}$-stationary with $P^{(i)*}_\pi = P^{*(i)}_\pi$. Thus, by item \eqref{it:Py} and Proposition \ref{prop:Ifproperty} item \eqref{it:Ifbisection}, we have
	\begin{align*}
		\mathbb{I}^{\pi}(P) = D_{KL}^{\pi}(P \| \otimes_{i=1}^d P^{(i)}_\pi) = D_{KL}^{\pi}(P^* \| \otimes_{i=1}^d P^{(i)*}_\pi) = D_{KL}^{\pi}(P^* \| \otimes_{i=1}^d P^{*(i)}_\pi) = \mathbb{I}^{\pi}(P^*).
	\end{align*}
\end{proof}

One possible application of the Pythagorean identity lies in assessing the convergence to equilibrium of $P$. This is in part motivated by \cite[Section $10$]{HKLPSW19} which suggests looking into ``Markov chains with factored transition kernels with a few factors". Suppose that $P$ is ergodic (i.e. irreducible and aperiodic) with a product form stationary distribution $\pi = \otimes_{i=1}^d \pi^{(i)}$. Let $\Pi \in \mathcal{L}(\mathcal{X})$ be the matrix where each row is $\pi$, and $\Pi^{(i)} \in \mathcal{L}(\mathcal{X}^{(i)})$ be a matrix where each row is $\pi^{(i)}$ for all $i$. We thus see that
\begin{align*}
	D_{KL}^{\pi}(P^n \| \Pi) \leq \max_{x \in \mathcal{X}} \widetilde{D}_{KL}(P^n(x,\cdot)\|\pi).
\end{align*}
On the other hand, we can lower bound $D_{KL}^{\pi}(P^n\|\Pi)$ via the KL divergence from $\Pi^{(i)}$ to $P^{n(i)}_\pi$ using the Pythagorean identity in Theorem \ref{thm:closest}:
\begin{align*}
	D_{KL}^{\pi}(P^n \| \Pi) \geq \max_{i \in \llbracket d \rrbracket} D_{KL}^{\pi^{(i)}}(P^{n(i)}_\pi \| \Pi^{(i)}).
\end{align*}

As a result this yields, for $\varepsilon > 0$, 
\begin{align*}
	t_{mix}(\varepsilon) \geq \max_{i \in \llbracket d \rrbracket} t_{mix}^{(i)}(\varepsilon),
\end{align*}
where $t_{mix}(\varepsilon) := \inf\{n \in \mathbb{N};\max_{x \in \mathcal{X}} \widetilde{D}_{KL}(P^n(x,\cdot)\|\pi) < \varepsilon\}$ is the worst-case KL divergence mixing time of $P$ and $t_{mix}^{(i)}(\varepsilon) := \inf\{n \in \mathbb{N}; D_{KL}^{\pi^{(i)}}(P^{n(i)}_\pi \| \Pi^{(i)}) < \varepsilon\}$ is an average-case KL divergence mixing time of the $i$th marginal transition matrix of $P^n$, namely $P^{n(i)}_{\pi}$. The interpretation is that the first time for $P$ to be $\varepsilon$ close to $\pi$ in the sense of KL divergence is at least larger than the worst average-case marginal transition matrix KL divergence mixing time. In Section \ref{subsec:comparisonfunctionalconstants}, we shall compare ergodicity constants, such as the spectral gap and log-Sobolev constant, between $P$ and its information projections.

\subsubsection{The closest product chain with prescribed marginals and a large deviation principle of Markov chains}\label{subsubsec:larged}

Fix $i \in \llbracket d \rrbracket$ and suppose we are prescribed with $L_j \in \mathcal{L}(\mathcal{X}^{(j)})$ for all $j \in \llbracket d \rrbracket$, $j \neq i$ and a transition matrix $P \in \mathcal{L}(\mathcal{X})$. We consider the problem of finding the closest product chain of the form $(\otimes_{j=1}^{i-1} L_j) \otimes L \otimes (\otimes_{j=i+1}^{d} L_j)$. In other words, we are interested in seeking a minimizer of
\begin{align*}
	L^{(i)}_* = L_*^{(i)}(P,L_1,\ldots,L_{i-1},L_{i+1},\ldots,L_d,f,\pi) \in \argmin_{L \in \mathcal{L}(\mathcal{X}^{(i)})} D_f^{\pi}(P \| (\otimes_{j=1}^{i-1} L_j) \otimes L \otimes (\otimes_{j=i+1}^{d} L_j)).
\end{align*}

In view of the previous subsection, in the special case where $L_j = P^{(j)}_\pi$ for all $j \neq i$, the $j$th marginal transition matrix of $P$ as introduced in Definition \ref{def:Pi}, it seems natural to guess that $L_*^{(i)}$ is $P^{(i)}_\pi$. Our next result shows that, depending on the choice of $f$, $L_*^{(i)}$ can in fact be weighted averages of $L_j$ and $P$ in a broad sense. Therefore, the seemingly natural product chain with transition matrix $\otimes_{i=1}^d P^{(i)}_\pi$ is not necessarily the closest product chain under some information divergences.

\begin{theorem}\label{thm:prescribed}
	Fix $i \in \llbracket d \rrbracket$ and suppose we are prescribed with $L_j \in \mathcal{L}(\mathcal{X}^{(j)})$ for all $j \in \llbracket d \rrbracket$, $j \neq i$ and a transition matrix $P \in \mathcal{L}(\mathcal{X})$. Let $\pi$ be a positive probability mass.
	
	\begin{enumerate}
		\item(Reverse KL divergence)\label{it:reverseKL} Let $f(t) = -\ln t$ that generates the reverse KL divergence. The unique $L_*^{(i)}$ is given by, for $x^i,y^i \in \mathcal{X}^{(i)}$,
		\begin{align}\label{eq:reverseKLLi}
			L_*^{(i)}(x^i,y^i) \propto \prod_{x^{(-i)},y^{(-i)}} P(x,y)^{\frac{\pi(x) \mathbf{Z}(x^{(-i)},y^{(-i)})}{Z(x^i,y^i)}},
		\end{align}
		where $x^{(-i)} := (x^1,\ldots,x^{i-1},x^{i+1},\ldots,x^d)$, $\mathbf{Z}(x^{(-i)},y^{(-i)}) := \prod_{j=1;~ j \neq i}^n L_j(x^j,y^j)$ and $Z(x^i,y^i) = \sum_{x^{(-i)},y^{(-i)}} \pi(x) \mathbf{Z}(x^{(-i)},y^{(-i)})$. 
		
		\item($\alpha$-divergence)\label{it:alpha} Let $f(t) = \frac{1}{\alpha - 1}(t^{\alpha}-1)$ that generates the $\alpha$-divergence with $\alpha \in (0,1) \cup (1,\infty)$. The unique $L_*^{(i)}$ is given by, for $x^i,y^i \in \mathcal{X}^{(i)}$,
		\begin{align*}
			L_*^{(i)}(x^i,y^i) \propto \left(\sum_{x^{(-i)},y^{(-i)}} \pi(x) \left( \prod_{j=1;~ j \neq i}^d L_j(x^j,y^j)\right)^{1-\alpha} P(x,y)^{\alpha}\right)^{1/\alpha}.
		\end{align*}
		
		\item(KL divergence)\label{it:KL} Let $f(t) = t \ln t$ that generates the KL divergence. The unique $L_*^{(i)}$ is given by
		\begin{align*}
			L_*^{(i)} = P^{(i)}_\pi.
		\end{align*}
		Note that $L_*^{(i)}$ depends on $P$ and $\pi$ and does not depend on $L_j$ with $j \neq i$.
	\end{enumerate}
\end{theorem}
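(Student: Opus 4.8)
The plan is to recognize each of the three cases as a constrained convex minimization over the compact polytope $\mathcal{L}(\mathcal{X}^{(i)})$ and to solve it via Lagrange multipliers. The first step is to put the objective in a row-decoupled form. Writing $x=(x^{(-i)},x^i)$, $y=(y^{(-i)},y^i)$ and $\mathbf{Z}(x^{(-i)},y^{(-i)}):=\prod_{j\neq i}L_j(x^j,y^j)$, the competitor evaluated at $(x,y)$ equals $\mathbf{Z}(x^{(-i)},y^{(-i)})\,L(x^i,y^i)$, so
\begin{align*}
D_f^{\pi}(P \| (\otimes_{j=1}^{i-1} L_j) \otimes L \otimes (\otimes_{j=i+1}^{d} L_j)) = \sum_{x^i}\sum_{y^i}\sum_{x^{(-i)},y^{(-i)}} \pi(x)\,\mathbf{Z}(x^{(-i)},y^{(-i)})\,L(x^i,y^i)\,f\!\left(\frac{P(x,y)}{\mathbf{Z}(x^{(-i)},y^{(-i)})\,L(x^i,y^i)}\right).
\end{align*}
For distinct $x^i$ the rows $L(x^i,\cdot)$ neither share summands nor share constraints (the only constraint is $\sum_{y^i}L(x^i,y^i)=1$, together with nonnegativity), so it suffices to minimize each row separately over the probability simplex on $\mathcal{X}^{(i)}$.

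For a fixed row $x^i$, I would form the Lagrangian with multiplier $\lambda_{x^i}$ for the normalization constraint and differentiate in $L(x^i,y^i)$. Using the identity $\frac{d}{dq}[bq\,f(a/(bq))]=b\,[f(u)-uf'(u)]$ with $u=a/(bq)$, the stationarity condition reads
\begin{align*}
\sum_{x^{(-i)},y^{(-i)}} \pi(x)\,\mathbf{Z}(x^{(-i)},y^{(-i)})\,\phi_f\!\left(\frac{P(x,y)}{\mathbf{Z}(x^{(-i)},y^{(-i)})\,L(x^i,y^i)}\right)=\lambda_{x^i},\qquad \phi_f(u):=f(u)-uf'(u).
\end{align*}
The remaining work is to evaluate $\phi_f$ and solve: for $f(t)=-\ln t$ one gets $\phi_f(u)=1-\ln u$; for $f(t)=\frac{1}{\alpha-1}(t^\alpha-1)$ one gets $\phi_f(u)=-u^\alpha-\frac{1}{\alpha-1}$; and for $f(t)=t\ln t$ one gets $\phi_f(u)=-u$.

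In each case, substituting $u=P(x,y)/(\mathbf{Z}(x^{(-i)},y^{(-i)})L(x^i,y^i))$ isolates a single factor in $L(x^i,y^i)$ that pulls out of the sum over $(x^{(-i)},y^{(-i)})$. In the reverse-KL case $\ln L(x^i,y^i)$ pulls out multiplied by $Z(x^i,y^i)=\sum_{x^{(-i)},y^{(-i)}}\pi(x)\mathbf{Z}(x^{(-i)},y^{(-i)})$; all remaining $y^i$-independent terms (including $\lambda_{x^i}$) are absorbed into a normalizing constant, and exponentiating what is left yields \eqref{eq:reverseKLLi}. In the $\alpha$-divergence case the factor $L(x^i,y^i)^{-\alpha}$ pulls out; solving and normalizing gives the stated formula, the power $1/\alpha$ being harmless since $\alpha\neq 0$ and the sign of $\lambda_{x^i}$ being fixed by positivity of the right-hand side. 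In the KL case the factor $L(x^i,y^i)^{-1}$ pulls out, leaving $L(x^i,y^i)\propto\sum_{x^{(-i)},y^{(-i)}}\pi(x)P(x,y)=\sum_{x^{(-i)},y^{(-i)}}(\pi\boxtimes P)(x,y)$, and imposing $\sum_{y^i}L(x^i,y^i)=1$ together with $\sum_y P(x,y)=1$ forces $\lambda_{x^i}=-\pi^{(i)}(x^i)$, so $L_*^{(i)}(x^i,y^i)=\sum_{x^{(-i)},y^{(-i)}}(\pi\boxtimes P)(x,y)/\pi^{(i)}(x^i)$, which is exactly $P^{(i)}_\pi$ of Definition \ref{def:Pi}; note it manifestly does not involve $L_j$ for $j\neq i$.

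To promote these stationary points to the unique global minimizers I would invoke convexity: $L\mapsto D_f^\pi(P\|\cdots)$ is a composition of the jointly convex perspective functional with a linear map, hence convex (cf.\ Proposition \ref{prop:Ifproperty} item \eqref{it:Ifconvex}), so a point meeting the KKT conditions is a global minimizer; and since $q\mapsto qf(p/q)$ has second derivative $p^2q^{-3}f''(p/q)>0$ whenever $p>0$ and $f$ is strictly convex — which holds for all three choices — the objective is strictly convex on the region where it is finite, giving uniqueness. The step I expect to require the most care is the bookkeeping forced by the conventions of Definition \ref{def:f-divergence}: one must check that the closed forms are well-defined (e.g.\ $Z(x^i,y^i)>0$, which follows from positivity of $\pi$, together with the relevant support/positivity conditions on $P$ and the $L_j$), verify that the nonnegativity constraints $L(x^i,y^i)\geq 0$ are inactive at the claimed optimum so that the equality-constrained Lagrangian suffices, and dispose of the degenerate sub-blocks separately — just as the proof of the Pythagorean identity in Theorem \ref{thm:closest} splits off the case where $P$ charges a pair that the competitor does not.
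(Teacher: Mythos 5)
Your proposal is correct and takes essentially the same route as the paper: decompose the objective row by row, impose the first-order optimality condition on each row of $L$, and solve in closed form for each choice of $f$, with the $\mathbf{Z}$-factor cancelling in the KL case so that $L_*^{(i)}=P^{(i)}_\pi$. The only cosmetic difference is that you enforce the normalization $\sum_{y^i}L(x^i,y^i)=1$ via a Lagrange multiplier, whereas the paper eliminates one variable by writing $L(x^i,z^i)=1-\sum_{y^i\neq z^i}L(x^i,y^i)$; your added convexity/strict-convexity remarks for uniqueness and the bookkeeping of degenerate supports supply detail the paper leaves implicit.
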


\begin{proof}
	We first prove item \eqref{it:reverseKL}. We write down
	\begin{align*}
		D_f^{\pi}(P &\| (\otimes_{j=1}^{i-1} L_j) \otimes L \otimes (\otimes_{j=i+1}^{d} L_j)) \\
		&= \sum_{x,y} \pi(x) L(x^i,y^i) \prod_{j=1;~ j \neq i}^d L_j(x^j,y^j) \ln \left(\dfrac{L(x^i,y^i) \prod_{j=1;~ j \neq i}^d L_j(x^j,y^j)}{P(x,y)}\right).
	\end{align*}
	Differentiating the above with respect to $L(x^i,y^i)$ and noting that $z^i$ is chosen such that $L(x^i,z^i) = 1 - \sum_{y^i \in \mathcal{X}^i;~ y^i \neq z^i} L(x^i,y^i)$, we set the derivative to be zero to give
	\begin{align*}
		\sum_{x^{(-i)},y^{(-i)}} \pi(x)\prod_{j=1;~ j \neq i}^d L_j(x^j,y^j) \ln \left(\dfrac{L(x^i,y^i)P(x,(y^1,\ldots,z^i,\ldots,y^d))}{L(x^i,z^i)P(x,y)}\right) = 0.
	\end{align*}
	Using $\mathbf{Z}(x^{(-i)},y^{(-i)}) = \prod_{j=1;~ j \neq i}^d L_j(x^j,y^j)$ and $Z(x^i,y^i) = \sum_{x^{(-i)},y^{(-i)}} \pi(x) \mathbf{Z}(x^{(-i)},y^{(-i)})$, we then see that
	$$L_*^{(i)}(x^i,y^i) \propto \prod_{x^{(-i)},y^{(-i)}} P(x,y)^{\frac{\pi(x) \mathbf{Z}(x^{(-i)},y^{(-i)})}{Z(x^i,y^i)}}.$$
	
	Next, we prove item \eqref{it:alpha}. In this case we have
	\begin{align*}
		D_f^{\pi}(P &\| (\otimes_{j=1}^{i-1} L_j) \otimes L \otimes (\otimes_{j=i+1}^{d} L_j)) \\
		&= \dfrac{1}{\alpha - 1}\sum_{x,y} \pi(x) \left(L(x^i,y^i) \prod_{j=1;~ j \neq i}^d L_j(x^j,y^j)\right)^{1-\alpha} P(x,y)^{\alpha}  - 1.
	\end{align*}
	We then differentiate the above with respect to $L(x^i,y^i)$ and note that $z^i$ satisfies $L(x^i,z^i) = 1 - \sum_{y^i \in \mathcal{X}^i;~ y^i \neq z^i} L(x^i,y^i)$. Setting the derivative to be zero leads to
	$$L_*^{(i)}(x^i,y^i) \propto \left(\sum_{x^{(-i)},y^{(-i)}} \pi(x) \left( \prod_{j=1;~ j \neq i}^d L_j(x^j,y^j)\right)^{1-\alpha} P(x,y)^{\alpha}\right)^{1/\alpha}.$$
	
	Finally, we prove item \eqref{it:KL}. We first note that
	\begin{align*}
		D_f^{\pi}(P &\| (\otimes_{j=1}^{i-1} L_j) \otimes L \otimes (\otimes_{j=i+1}^{d} L_j)) \\
		&= \sum_{x,y} \pi(x) P(x,y) \ln \left(\dfrac{P(x,y)}{L(x^i,y^i) \prod_{j=1;~ j \neq i}^d L_j(x^j,y^j)}\right).
	\end{align*}
	Differentiating the above with respect to $L(x^i,y^i)$ and noting that $z^i$ is chosen such that $L(x^i,z^i) = 1 - \sum_{y^i \in \mathcal{X}^i;~ y^i \neq z^i} L(x^i,y^i)$, we set the derivative to be zero to give
	\begin{align*}
		L_*^{(i)}(x^i,y^i) \propto \sum_{x^{(-i)},y^{(-i)}} \pi(x) P(x,y),
	\end{align*}
	that is, $L_*^{(i)} = P^{(i)}_{\pi}$.
\end{proof}

One application of Theorem \ref{thm:prescribed} lies in the large deviation analysis and Sanov's theorem of Markov chains, in which we apply the results obtained in \cite{FF02}. We refer readers to \cite{N85,V14,DZ10} for related literature on large deviations of Markov chains.

Let $X = (X_n)_{n \in \mathbb{N}_0}$ be the Markov chain with transition matrix $P$. Define the pair empirical measure of $X$ to be
\begin{align}\label{eq:pairempirical}
	E_n := \dfrac{1}{n} \left( \sum_{i=1}^{n-1} \delta_{(X_i,X_{i+1})} + \delta_{(X_n,X_{1})} \right).
\end{align} 

\begin{theorem}[A Sanov's theorem for pair empirical measure of Markov chains]\label{thm:prescribedlarged}
	Fix $i \in \llbracket d \rrbracket$. Let $\pi = \otimes_{l=1}^d \pi^{(l)}$. Suppose we are prescribed with $\pi^{(j)}$-stationary $L_j \in \mathcal{L}(\mathcal{X}^{(j)})$ for all $j \in \llbracket d \rrbracket$, $j \neq i$ and a $\pi$-stationary $P \in \mathcal{L}(\mathcal{X})$. Let $K_i$ be the set
	\begin{align*}
		K_i &= K_i(L_1,\ldots,L_{i-1},L_{i+1},\ldots,L_{d}) \\
		&:= \{ (\otimes_{j=1}^{i-1} L_j) \otimes M \otimes (\otimes_{j=i+1}^{d} L_j);~ \textrm{$\pi^{(i)}$-stationary}\quad M \in \mathcal{L}(\mathcal{X}^{(i)}) \},
	\end{align*}
	and $f(t) = - \ln t$ that generates the reverse KL divergence. We have
	\begin{align*}
		\limsup_{n \to \infty} \dfrac{1}{n} \ln \mathbb{P}(E_n \in K_i) \leq - D_f^{\pi}(P \| (\otimes_{j=1}^{i-1} L_j) \otimes L^{(i)}_* \otimes (\otimes_{j=i+1}^{d} L_j)),
	\end{align*}
	where we recall that $E_n$ is the pair empirical measure as introduced in \eqref{eq:pairempirical} and $L^{(i)}_*$ is given in \eqref{eq:reverseKLLi}. Note that the above result holds without any restriction on the initial distribution of the chain $X$.
\end{theorem}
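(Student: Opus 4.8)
The plan is to derive the bound from the large deviation upper bound for the pair empirical measure $E_n$, combined with the explicit minimizer of the reverse KL projection problem already obtained in Theorem \ref{thm:prescribed} item \eqref{it:reverseKL}.

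First I would make precise the event $\{E_n \in K_i\}$. Every $Q \in K_i$ is of the form $(\otimes_{j=1}^{i-1} L_j) \otimes M \otimes (\otimes_{j=i+1}^{d} L_j)$ with $M$ being $\pi^{(i)}$-stationary and each $L_j$ being $\pi^{(j)}$-stationary, so $Q$ is $\pi$-stationary for $\pi = \otimes_{l=1}^d \pi^{(l)}$ and is identified with its edge measure $\pi \boxtimes Q \in \mathcal{P}(\mathcal{X} \times \mathcal{X})$; thus $\{E_n \in K_i\}$ is to be read as $\{E_n \in \widetilde{K}_i\}$ where $\widetilde{K}_i := \{\pi \boxtimes Q : Q \in K_i\}$. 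I would then note that $\widetilde{K}_i$ is a closed (indeed compact) subset of $\mathcal{P}(\mathcal{X} \times \mathcal{X})$: the set of $\pi^{(i)}$-stationary $M \in \mathcal{L}(\mathcal{X}^{(i)})$ is compact, being cut out of a compact cube by row-stochasticity and the affine constraint $\pi^{(i)} M = \pi^{(i)}$, and $M \mapsto \pi \boxtimes ((\otimes_{j < i} L_j) \otimes M \otimes (\otimes_{j > i} L_j))$ is continuous.

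Next I would invoke the large deviation upper bound for $E_n$ from \cite{FF02}. The periodization term $\delta_{(X_n, X_1)}$ in \eqref{eq:pairempirical} forces the two marginals of $E_n$ to coincide, and $E_n$ satisfies a large deviation principle at speed $n$ with rate function
\begin{align*}
    I(\nu) = \begin{cases} \widetilde{D}_{KL}(\nu \| \nu_1 \boxtimes P), & \nu_1 = \nu_2, \\ +\infty, & \text{otherwise}, \end{cases}
\end{align*}
where $\nu_1, \nu_2 \in \mathcal{P}(\mathcal{X})$ are the first and second marginals of $\nu$ and $(\nu_1 \boxtimes P)(x,y) = \nu_1(x) P(x,y)$; moreover the associated upper bound holds uniformly over the initial distribution of $X$, which can be seen by conditioning on $X_0$ and maximizing over the finitely many starting states. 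Applying this upper bound to the closed set $\widetilde{K}_i$ gives
\begin{align*}
    \limsup_{n \to \infty} \frac{1}{n} \ln \mathbb{P}(E_n \in K_i) \leq - \inf_{\nu \in \widetilde{K}_i} I(\nu).
\end{align*}

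Finally I would evaluate the infimum. For $\nu = \pi \boxtimes Q$ with $Q \in K_i$, the $\pi$-stationarity of $Q$ gives $\nu_1 = \nu_2 = \pi$, so with $f(t) = - \ln t$,
\begin{align*}
    I(\pi \boxtimes Q) = \sum_{x, y} \pi(x) Q(x,y) \ln \frac{\pi(x) Q(x,y)}{\pi(x) P(x,y)} = \sum_{x, y} \pi(x) Q(x,y) \ln \frac{Q(x,y)}{P(x,y)} = D_f^{\pi}(P \| Q),
\end{align*}
the identity persisting (with both sides $+\infty$) in the degenerate case $P(x,y) = 0 < Q(x,y)$ with $\pi(x) > 0$. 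Hence $\inf_{\nu \in \widetilde{K}_i} I(\nu) = \inf_{Q \in K_i} D_f^{\pi}(P \| Q)$, and because $K_i$ arises by imposing $\pi^{(i)}$-stationarity on $M$, relaxing that constraint only decreases the infimum:
\begin{align*}
    \inf_{Q \in K_i} D_f^{\pi}(P \| Q)
    &\geq \inf_{M \in \mathcal{L}(\mathcal{X}^{(i)})} D_f^{\pi}\big(P \,\big\|\, (\otimes_{j=1}^{i-1} L_j) \otimes M \otimes (\otimes_{j=i+1}^{d} L_j)\big) \\
    &= D_f^{\pi}\big(P \,\big\|\, (\otimes_{j=1}^{i-1} L_j) \otimes L_*^{(i)} \otimes (\otimes_{j=i+1}^{d} L_j)\big),
\end{align*}
the last equality being exactly Theorem \ref{thm:prescribed} item \eqref{it:reverseKL}. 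Chaining the three displays yields the claim. The main obstacle I anticipate is the first use of \cite{FF02}: fixing the precise form of the rate function on shift-invariant measures and confirming that the upper bound is valid uniformly in the initial law — this uniformity is exactly what justifies the closing remark that the result holds ``without any restriction on the initial distribution''. Everything after that is a short computation plus the elementary remark that dropping the stationarity constraint on $M$ can only lower the infimum.
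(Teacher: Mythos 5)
Your argument is correct and follows essentially the same route as the paper: invoke the large-deviation upper bound of \cite{FF02} for the pair empirical measure over the set of balanced edge measures $\pi \boxtimes Q$ with $Q \in K_i$, identify the rate function on that set with the reverse KL divergence $D_f^{\pi}(P \| Q)$, and use Theorem \ref{thm:prescribed} to control the resulting infimum. Your explicit relaxation step (dropping the $\pi^{(i)}$-stationarity constraint on $M$, so the infimum can only decrease) and your verification of closedness are in fact slightly more careful than the paper's proof, which asserts the corresponding identification directly.
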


\begin{proof}
	The plan is to invoke Theorem $1.1$ in \cite{FF02}. Let us first assume that $K$ is a subset of the set of balanced measures. Then by \cite{FF02} and Theorem \ref{thm:prescribed}, these yield
	\begin{align*}
		\limsup_{n \to \infty} \dfrac{1}{n} \ln \mathbb{P}(E_n \in K_i) \leq - \inf_{L \in K} D_f^{\pi}(P \| L) = - D_f^{\pi}(P \| (\otimes_{j=1}^{i-1} L_j) \otimes L^{(i)}_* \otimes (\otimes_{j=i+1}^{d} L_j)).
	\end{align*}
	
	It remains to verify that $K_i$ is a subset of the set of balanced measures, in which we readily see that
	$$(\pi \boxtimes (\otimes_{j=1}^{i-1} L_j) \otimes M \otimes (\otimes_{j=i+1}^{d} L_j)) (\mathcal{X},\cdot) = \pi(\cdot) = (\pi \boxtimes (\otimes_{j=1}^{i-1} L_j) \otimes M \otimes (\otimes_{j=i+1}^{d} L_j)) (\cdot,\mathcal{X}).$$
	
\end{proof}

\subsubsection{A coordinate descent algorithm for finding the closest product chain}

Let us recall that in Theorem \ref{thm:closest}, we have shown $\otimes_{i=1}^d P^{(i)}_\pi$ is the unique closest product chain to a given $P$. In other choices of $f$-divergences such as the reverse KL divergence or the $\alpha$-divergence, we did not manage to derive a closed form formula for the closest product chain. In these cases, if we have the closed form of the closest product chain with prescribed marginals as in Theorem \ref{thm:prescribed}, we can derive a coordinate descent algorithm to find approximately the closest product chain.

Suppose that the algorithm is initiated with $L^{0}_i \in \mathcal{L}(\mathcal{X}^{(i)})$ for $i \in \llbracket d \rrbracket$. At iteration $l \in \mathbb{N}$ and for each $i \in \llbracket d \rrbracket$, we compute that
\begin{align*}
	L^{l}_i = L^l_{i}(P,L^{l}_1,\ldots,L^{l}_{i-1},L^{l-1}_{i+1},\ldots,L^{l-1}_{d},f,\pi) \in \argmin_{L \in \mathcal{L}(\mathcal{X}^{(i)})} D_f^{\pi}(P \| (\otimes_{j=1}^{i-1} L^{l}_j) \otimes L \otimes (\otimes_{j=i+1}^{d} L^{l-1}_{j})).
\end{align*}
In the case of reverse KL divergence or $\alpha$-divergence, these are computed in Theorem \ref{thm:prescribed}. The sequence $(\otimes_{i=1}^d L^{l}_i)_{l \in \mathbb{N}}$ satisfies
$$D_f^{\pi}(P \| \otimes_{i=1}^{d} L^{l}_i) \geq D_f^{\pi}(P \| \otimes_{i=1}^{d} L^{l+1}_{i} )$$ 
for $l \geq 0$.

\subsection{Leave-$S$-out transition matrices and Han-Shearer type inequalities for KL divergence of Markov chains}\label{subsec:leaveoneout}


In this section, we consider marginalizing a subset $S \subseteq \llbracket d \rrbracket$ of the $d$ coordinates of a multivariate transition matrix $P$ on a product state space $\mathcal{X} = \bigtimes_{j=1}^d \mathcal{X}^{(j)}$. In doing so, we introduce the leave-$S$-out and keep-$S$-in transition matrices, as well as deriving Han-Shearer type inequalities for Markov chains.

The leave-$S$-out state space is defined to be $\mathcal{X}^{(-S)} := \bigtimes_{j=1;j \notin S}^d \mathcal{X}^{(j)}$, while the keep-$S$-in state space is defined to be $\mathcal{X}^{(S)} := \bigtimes_{j=1;j \in S}^d \mathcal{X}^{(j)}$. Let $x^j \in \mathcal{X}^{(j)}$ for all $j \in \llbracket d \rrbracket$, and we write $x^{(-S)} := (x^{j})_{j \notin S} \in \mathcal{X}^{(-S)}$, $x^{(S)} := (x^{j})_{j \in S} \in \mathcal{X}^{(S)}$. Let $\mu,\otimes_{j=1}^d \nu_j \in \mathcal{P}(\mathcal{X})$. The leave-$S$-out distribution of $\mu$ is given by
$$\mu^{(-S)}(x^{(-S)}) := \textcolor{black}{\sum_{x^i \in \mathcal{X}^{(i)} \, \textrm{for each}\, i \in S}} \mu(x^1,\ldots,x^d).$$
In particular, this yields
$$(\otimes_{j=1}^d \nu_j)^{(-S)} = \otimes_{j=1;~j \notin S}^d \nu_j.$$
The keep-$S$-in distribution of $\mu$ is the leave-$\llbracket d \rrbracket \backslash S$-out distribution of $\mu$, that is,
$$\mu^{(S)}(x^{(S)}) := \mu^{(-\llbracket d \rrbracket \backslash S)}(x^{(-\llbracket d \rrbracket \backslash S)}) = \textcolor{black}{\sum_{x^i \in \mathcal{X}^{(i)} \, \textrm{for each}\, i \notin S}} \mu(x^1,\ldots,x^d).$$
This leads to
$$(\otimes_{j=1}^d \nu_j)^{(S)} = \otimes_{j=1;~j \in S}^d \nu_j.$$
In the special case of a singleton $j \in \llbracket d \rrbracket$, we write that
\begin{align*}
	\mathcal{X}^{(-j)} &= \mathcal{X}^{(-\{j\})}, \quad \mathcal{X}^{(j)} = \mathcal{X}^{(\{j\})}, \\
	\mu^{(-j)} &= \mu^{(-\{j\})}, \quad \mu^{(j)} = \mu^{(\{j\})}.
\end{align*}

We remark that these leave-one-out and more generally leave-$S$-out distributions are widely used in forming the jackknife estimator in mathematical statistics and cross-validation in the training of machine learning algorithms.

Consider a sequence $(S_i)_{i=1}^n$ with $S_i \subseteq \llbracket d \rrbracket$, where each $j \in \llbracket d \rrbracket$ belongs to at least $r$ of $S_i$. The Shearer's lemma of entropy \cite[Theorem $1.8$]{polyanskiy2022information} is given by
\begin{align}\label{eq:Shearerentropy}
	H(X_1,\ldots,X_d) \leq \dfrac{1}{r} \sum_{i=1}^n H((X_l)_{l \in S_i}),
\end{align}
where $(X_l)_{l \in S_i} \sim \mu^{(S_i)}$ and $H((X_l)_{l \in S_i}) := - \sum_{x^{(S_i)}} \mu^{(S_i)}(x^{(S_i)}) \ln  \mu^{(S_i)}(x^{(S_i)})$ is the entropy of $\mu^{(S_i)}$, while the KL divergence version of the Shearer's lemma \cite[Corollary $2.8$]{GLSS15} is stated as, for $\mu, \nu = \otimes_{j=1}^d \nu_j \in \mathcal{P}(\mathcal{X})$,
\begin{align}\label{eq:Shearerpm}
	\widetilde{D}_{KL}(\mu \| \otimes_{j=1}^d \nu_j) \geq \dfrac{1}{r} \sum_{i=1}^n \widetilde{D}_{KL}(\mu^{(S_i)} \| \nu^{(S_i)}).
\end{align}
In the special case of taking $S_i = \llbracket d \rrbracket \backslash \{i\}$ and $n = d$ so that $r = d - 1$, we recover the Han's inequality for KL divergence between discrete probability masses \cite[Theorem $4.9$]{BLM13}:
\begin{align}\label{eq:Hanspm}
	\widetilde{D}_{KL}(\mu \| \otimes_{j=1}^d \nu_j) \geq \dfrac{1}{d-1} \sum_{i=1}^n \widetilde{D}_{KL}(\mu^{(-i)} \| \nu^{(-i)}).
\end{align}

Next, we introduce the leave-$S$-out transition matrix:

\begin{definition}[$P^{(-S)}_\pi$ and $P^{(S)}_\pi$: the leave-$S$-out and keep-$S$-in transition matrix of $P$ with respect to $\pi$]\label{def:P-S}
	Let $\pi \in \mathcal{P}(\mathcal{X})$ be a positive probability mass, $P \in \mathcal{L}(\mathcal{X})$ and $S \subseteq \llbracket d \rrbracket$. For any $(x^{(-S)},y^{(-S)}) \in \mathcal{X}^{(-S)} \times \mathcal{X}^{(-S)}$, we define
	\begin{align*}
		P^{(-S)}_\pi(x^{(-S)},y^{(-S)}) &:= \dfrac{\sum_{(x^{(S)},y^{(S)}) \in \mathcal{X}^{(S)} \times \mathcal{X}^{(S)}} \pi(x^1,\ldots,x^d) P((x^1,\ldots,x^d),(y^1,\ldots,y^d))}{\sum_{x^{(S)} \in \mathcal{X}^{(S)}}\pi(x^1,\ldots,x^d)} \\
		&= \dfrac{(\pi \boxtimes P)^{(-S)}(x^{(-S)},y^{(-S)})}{\pi^{(-S)}(x^{(-S)})}.
	\end{align*}
	Note that $P^{(-S)}_\pi \in \mathcal{L}(\mathcal{X}^{(-S)})$. The keep-$S$-in transition matrix of $P$ with respect to $\pi$ is
	$$P^{(S)}_\pi := P^{(-\llbracket d \rrbracket \backslash S)}_\pi \in \mathcal{L}(\mathcal{X}^{(S)}).$$
	In the special case of $S = \{i\}$ for $i \in \llbracket d \rrbracket$, we write
	\begin{align*}
		P^{(-i)}_\pi = P^{(-\{i\})}_\pi, \quad P^{(i)}_\pi = P^{(\{i\})}_\pi,
	\end{align*}
	and call these to be respectively the leave-$i$-out and keep-$i$-in transition matrix of $P$ with respect to $\pi$. When $P$ is $\pi$-stationary, we omit the subscript and write $P^{(-S)},P^{(S)}$.
\end{definition}

\begin{remark}[$P^{(-S)}_\pi$ and $P^{(S)}_\pi$ as conditional expectations and a simulation procedure]\label{rk:condite}
	In this remark, we show that $P^{(-S)}_\pi$ can be understood as conditional expectations. Precisely, let $\pi(\cdot|x^{(-S)})$ denote the conditional probability mass of $\pi$ given the coordinates $x^{(-S)} \in \mathcal{X}^{(-S)}$, where for all $x^{(S)} \in \mathcal{X}^{(S)}$, we have 
	$$\pi(x^{(S)}|x^{(-S)}) = \dfrac{\pi(x^1,\ldots,x^d)}{\pi^{(-S)}(x^{(-S)})}.$$
	In view of Definition \ref{def:P-S}, we arrive at, for any  $(x^{(-S)},y^{(-S)}) \in \mathcal{X}^{(-S)} \times \mathcal{X}^{(-S)}$,
	\begin{align*}
		P^{(-S)}_\pi(x^{(-S)},y^{(-S)}) &= \sum_{(x^{(S)},y^{(S)}) \in \mathcal{X}^{(S)} \times \mathcal{X}^{(S)}} \pi(x^{(S)}|x^{(-S)}) P((x^1,\ldots,x^d),(y^1,\ldots,y^d)).
	\end{align*}
	Viewing these projections as conditional expectations is therefore in line with conditional expectations in the drift term of independent projections in \cite{Lacker23}.
	In addition, this observation allows us to simulate one step of the projection chain associated with $P^{(-S)}_\pi$. Suppose that $\pi(\cdot|x^{(-S)})$ can be sampled from. Starting from the initial state $x^{(-S)}$, we first draw a random $x^{(S)}$ according to $\pi(\cdot|x^{(-S)})$, followed by one step of $P$ from $(x^{(S)},x^{(-S)})$ to $(y^{(S)},y^{(-S)})$. This is applied in Section \ref{sec:projectionsamplers} to design a projection sampler for the swapping algorithm.
\end{remark}

	We note that when $P$ is ergodic and admits $\pi$ as stationary distribution, the keep-$S$-in transition matrix $P^{(S)}$ can be viewed as a special case of the ``projection chain" of $P$ in \cite{PS17,MR02,JSTV04}, or as an ``induced chain" of $P$ in \cite[Section $4.6$]{AF02}. The latter is also known as a ``collapsed chain" in \cite{CLP99}, which can be understood as the opposite of the lifting procedure in MCMC.
	
	Let $\Omega_{x^{(S)}} := \{x^{(S)}\} \times \mathcal{X}^{(-S)}$. We then see that $(\Omega_{x^{(S)}})_{x^{(S)} \in \mathcal{X}^{(S)}}$ is a partition of the state space, that is, $\mathcal{X} = \bigtimes_{i=1}^d \mathcal{X}^{(i)} = \cup_{x^{(S)} \in \mathcal{X}^{(S)}} \Omega_{x^{(S)}}$. Let $\overline{P}$ be the ``projection chain" of $P$ with respect to $(\Omega_{x^{(S)}})_{x^{(S)} \in \mathcal{X}^{(S)}}$ in the sense of \cite{JSTV04}, which is defined to be
	\begin{align*}
		\overline{P}(x^{(S)},y^{(S)}) &:= \dfrac{\sum_{x \in \Omega_{x^{(S)}},y \in \Omega_{y^{(S)}}} \pi(x) P(x,y)}{\sum_{x \in \Omega_{x^{(S)}}}\pi(x)}. 
	\end{align*}
	This yields $\overline{P} = P^{(S)}$.
	
	
	We summarize the ergodicity properties of $P^{(S)}$ appeared in \cite{AF02,JSTV04}. We can understand that these properties are inherited from the counterpart properties of the original $P$:
	\begin{proposition}\label{prop:PSergodic}
		Let $P \in \mathcal{L}(\mathcal{X})$, $S \subseteq \llbracket d \rrbracket$ and $\pi \in \mathcal{P}(\mathcal{X})$ be a positive probability mass. We have
		\begin{enumerate}
			\item If $P$ is $\pi$-stationary, then $P^{(S)}_{\pi}$ is $\pi^{(S)}$-stationary.
			
			\item If $P$ is $\pi$-reversible, then $P^{(S)}_{\pi}$ is $\pi^{(S)}$-reversible.
			
			\item If $P$ is $\pi$-stationary, then $P^{(S)}_{\pi} \otimes P^{(-S)}_{\pi}$ is $\pi^{(S)} \otimes \pi^{(-S)}$-stationary. In particular, if $\pi = \otimes_{i=1}^{d} \pi^{(i)}$ is a product stationary distribution, then $P^{(S)}_{\pi} \otimes P^{(-S)}_{\pi}$ is $\pi$-stationary.
			
			\item If $P$ is lazy, that is, $P(x,x) \geq 1/2$ for all $x \in \mathcal{X}$, then $P^{(S)}_{\pi}$ is lazy.
			
			\item If $P$ is ergodic, then $P^{(S)}_{\pi}$ is ergodic.
		\end{enumerate}
	\end{proposition}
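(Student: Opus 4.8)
The plan is to reduce each of the five claims to a property of the projection chain $\overline{P}$ and then either cite \cite{AF02,JSTV04} or re-derive the short argument. Recall from the discussion just before the proposition that $P^{(S)}_{\pi} = \overline{P}$, the projection chain of $P$ with respect to the partition $(\Omega_{x^{(S)}})_{x^{(S)} \in \mathcal{X}^{(S)}}$ with $\Omega_{x^{(S)}} = \{x^{(S)}\} \times \mathcal{X}^{(-S)}$, and that, by Definition \ref{def:P-S}, $P^{(-S)}_{\pi} = P^{(\llbracket d \rrbracket \backslash S)}_{\pi}$ is again a keep-$T$-in matrix with $T = \llbracket d \rrbracket \backslash S$. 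Hence it suffices to establish items 1, 2, 4, 5 for a generic keep-$S$-in matrix, and item 3 will then follow by applying item 1 once to $S$ and once to $\llbracket d \rrbracket \backslash S$.

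For items 1 and 2 I would argue by direct summation. Writing $\pi^{(S)}(x^{(S)}) P^{(S)}_{\pi}(x^{(S)},y^{(S)}) = \sum_{x^{(-S)},y^{(-S)}} \pi(x)P(x,y)$ with $x = (x^{(S)},x^{(-S)})$, $y = (y^{(S)},y^{(-S)})$, stationarity of $P^{(S)}_{\pi}$ follows by summing over $x^{(S)}$ and using $\pi P = \pi$, while $\pi^{(S)}$-reversibility follows by inserting $\pi(x)P(x,y) = \pi(y)P(y,x)$ and relabelling the summation variables $x^{(-S)} \leftrightarrow y^{(-S)}$. For item 4 I would observe that $P^{(S)}_{\pi}(x^{(S)},x^{(S)})$ is a $\pi$-weighted average over $x^{(-S)}$ of partial row sums $\sum_{y^{(-S)}} P((x^{(S)},x^{(-S)}),(x^{(S)},y^{(-S)}))$, each of which already contains the diagonal term $P(x,x) \geq 1/2$, so $P^{(S)}_{\pi}(x^{(S)},x^{(S)}) \geq 1/2$. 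For item 3, item 1 applied twice gives that $P^{(S)}_{\pi}$ is $\pi^{(S)}$-stationary and $P^{(-S)}_{\pi}$ is $\pi^{(-S)}$-stationary, and the elementary fact that a tensor product of stationary chains is stationary with respect to the tensor product of the respective stationary distributions yields the first assertion; the special case follows since $\pi = \otimes_{i=1}^d \pi^{(i)}$ forces $\pi^{(S)} \otimes \pi^{(-S)} = \pi$ after the canonical reordering of coordinates.

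The only claim requiring a genuine, if brief, argument is item 5, which I therefore expect to be the main obstacle. Here I would use that an ergodic $P$ on the finite set $\mathcal{X}$ satisfies $P^n(x,y) > 0$ for all $x,y$ and all $n \geq N_0$. Fixing $a,b \in \mathcal{X}^{(S)}$ and $n \geq N_0$, choose $x \in \Omega_a$, $y \in \Omega_b$ and a path $x = z_0 \to \cdots \to z_n = y$ with every $P(z_k,z_{k+1}) > 0$, and let $a_k$ denote the $S$-coordinates of $z_k$. Since $\pi > 0$, $z_k \in \Omega_{a_k}$ and $z_{k+1} \in \Omega_{a_{k+1}}$, we get $\overline{P}(a_k,a_{k+1}) \geq \pi(z_k)P(z_k,z_{k+1})/\pi(\Omega_{a_k}) > 0$ for every $k$ (this also covers the case $a_k = a_{k+1}$), hence $\overline{P}^n(a,b) \geq \prod_{k=0}^{n-1}\overline{P}(a_k,a_{k+1}) > 0$. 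Thus $\overline{P}^n$ is entrywise positive for all $n \geq N_0$, so $P^{(S)}_{\pi} = \overline{P}$ is irreducible and aperiodic; note that this argument uses only $\pi > 0$, consistent with item 5 not assuming $\pi$-stationarity of $P$. Apart from this, the remaining difficulty is purely notational, namely keeping the leave-$S$-out / keep-$S$-in marginalizations and the tensor-product coordinate orderings consistent throughout.
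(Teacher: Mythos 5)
Your proof is correct, and it is in fact more than the paper provides: the paper states Proposition \ref{prop:PSergodic} without proof, remarking only that these properties are inherited from $P$ because $P^{(S)}_{\pi}$ is an instance of the projection/induced chains of \cite{AF02,JSTV04}. Your direct marginalization arguments for items 1, 2 and 4 (summing the identity $\pi^{(S)}(x^{(S)})P^{(S)}_{\pi}(x^{(S)},y^{(S)})=\sum_{x^{(-S)},y^{(-S)}}\pi(x)P(x,y)$ over $x^{(S)}$, inserting detailed balance and relabelling, and bounding the partial row sum below by the diagonal entry) are exactly the short computations those references would supply, and the reduction of item 3 to item 1 applied to $S$ and $\llbracket d\rrbracket\backslash S$ together with the standard tensor-product fact is the intended route. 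The one place where you add genuine content is item 5: using that a finite ergodic $P$ satisfies $P^{n}(x,y)>0$ for all $x,y$ and $n\geq N_{0}$, lifting a positive-probability path $z_{0}\to\cdots\to z_{n}$ and projecting it via $\overline{P}(a_{k},a_{k+1})\geq \pi(z_{k})P(z_{k},z_{k+1})/\pi(\Omega_{a_{k}})>0$ (which needs only $\pi>0$, consistent with the proposition's hypotheses) gives $\bigl(P^{(S)}_{\pi}\bigr)^{n}(a,b)>0$ for all $a,b$ and $n\geq N_{0}$, so irreducibility and aperiodicity follow simultaneously. This makes the statement self-contained rather than dependent on the induced-chain literature, at no cost in length.
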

	%
	%
	%
	
	
	In the context of MCMC, there are often situations in which we are only interested in sampling from a subset $S$ out of the $d$ coordinates of the stationary distribution $\pi$ of a sampler. Thus, $P^{(S)}_{\pi}$ offers a natural projection sampler to approximately sample from $\pi^{(S)}$. \textcolor{black}{We also recall that in Section \ref{subsec:liftedMCMC} we have presented numerical evidence to support the use of projection samplers as motivation of this paper.}
	
	As a concrete example, we can consider the run-and-tumble Markov chains \cite{GGR21} where the algorithm maintains both the positions and directions of $d$ particles. In such setting, we are often interested in sampling from the stationary distribution of the positions of the particles only and discard the samples from the directions. Another concrete example would be the swapping algorithm that we shall discuss in Section \ref{sec:projectionsamplers}, where one maintains a system of Markov chains over a range of temperatures. The swapping algorithm is designed to sample from the Boltzmann-Gibbs distribution at the lowest temperature of the algorithm and samples that correspond to higher temperatures are often discarded. A third example is the auxiliary MCMC methods \cite{H98}, where one artificially add auxiliary variables in a MCMC algorithm to improve convergence, and at the end of simulation the samples of these auxiliary variables are discarded. In these algorithms, it is therefore natural to consider $P^{(S)}_{\pi}$ as a candidate sampler for $\pi^{(S)}$. We shall compare hitting and mixing time parameters between $P$ and $P^{(S)}_{\pi}$ in Section \ref{subsec:comparisonfunctionalconstants}. In addition, we implement these ideas and propose an improved projection sampler based on the swapping algorithm and analyze its mixing time in Section \ref{subsubsec:projectionsampler}.





Now, let us recall the partition lemma for KL divergence of probability masses \cite[Lemma $13.1.3$]{Bremaud2017}, which is a consequence of the log-sum inequality. For $\mu, \nu \in \mathcal{P}(\mathcal{X})$ and $\emptyset \neq S \subseteq \llbracket d \rrbracket$, the partition lemma gives
\begin{align*}
	\widetilde{D}_{KL}(\mu \| \nu) \geq \widetilde{D}_{KL}(\mu^{(S)} \| \nu^{(S)}).
\end{align*}
Note that this result holds independent of whether $\mu$ or $\nu$ is a product probability mass. We now state the partition lemma for KL divergence of Markov chains. It will be used in Section \ref{subsec:comparisonfunctionalconstants} to prove some monotonicity results, and in Section \ref{subsec:submodular} to demonstrate that the entropy rate is a submodular function.
\begin{theorem}[Partition lemma for KL divergence of Markov chains]\label{thm:partitionMC}
	Let $\pi \in \mathcal{P}(\mathcal{X})$, $P, L \in \mathcal{L}(\mathcal{X})$ and suppose $\emptyset \neq S \subseteq \llbracket d \rrbracket$. We have
	\begin{align*}
		D_{KL}^{\pi}(P \| L) \geq D_{KL}^{\pi^{(S)}}(P^{(S)}_\pi \| L^{(S)}_\pi).
	\end{align*}
	Note that this result holds independently of whether $\pi$ is a product probability mass or $P$ or $L$ is a product transition matrix.
\end{theorem}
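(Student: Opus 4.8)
The plan is to reduce the statement to the log-sum inequality applied coordinatewise, mirroring the proof of the partition lemma for probability masses but carried out at the level of edge measures. The key observation is that $D_{KL}^{\pi}(P\|L)$ is exactly the KL divergence $\widetilde{D}_{KL}(\pi\boxtimes P \,\|\, \pi\boxtimes L)$ between the two edge measures on $\mathcal{X}\times\mathcal{X}$ when $L$ and $P$ share the first-marginal $\pi$; more precisely, expanding the definitions,
\begin{align*}
	D_{KL}^{\pi}(P\|L) = \sum_{x,y\in\mathcal{X}} \pi(x)P(x,y)\ln\frac{\pi(x)P(x,y)}{\pi(x)L(x,y)} = \widetilde{D}_{KL}\big(\pi\boxtimes P \,\big\|\, \pi\boxtimes L\big),
\end{align*}
since the extra factor $\pi(x)$ in numerator and denominator cancels inside the logarithm. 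So the left-hand side is an ordinary KL divergence between two probability measures on the product space $\mathcal{X}\times\mathcal{X} = \big(\bigtimes_j \mathcal{X}^{(j)}\big)\times\big(\bigtimes_j \mathcal{X}^{(j)}\big)$.

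Next I would identify the right-hand side similarly. By Definition \ref{def:P-S}, $P^{(S)}_\pi(x^{(S)},y^{(S)}) = (\pi\boxtimes P)^{(S)}(x^{(S)},y^{(S)})/\pi^{(S)}(x^{(S)})$, where the keep-$S$-in operation on the edge measure $\pi\boxtimes P \in \mathcal{P}(\mathcal{X}\times\mathcal{X})$ means marginalizing out all coordinates $x^j, y^j$ with $j\notin S$. Hence $\pi^{(S)}\boxtimes P^{(S)}_\pi = (\pi\boxtimes P)^{(S)}$ as measures on $\mathcal{X}^{(S)}\times\mathcal{X}^{(S)}$, and likewise $\pi^{(S)}\boxtimes L^{(S)}_\pi = (\pi\boxtimes L)^{(S)}$ (note both $P^{(S)}_\pi$ and $L^{(S)}_\pi$ use the \emph{same} $\pi$, so their marginal distributions agree and the cancellation trick works again). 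Therefore
\begin{align*}
	D_{KL}^{\pi^{(S)}}(P^{(S)}_\pi\|L^{(S)}_\pi) = \widetilde{D}_{KL}\big((\pi\boxtimes P)^{(S)} \,\big\|\, (\pi\boxtimes L)^{(S)}\big).
\end{align*}
With both sides rewritten this way, the inequality is precisely the partition lemma for KL divergence of probability masses quoted just above the theorem (the log-sum inequality / data-processing under coarsening), applied to $\mu = \pi\boxtimes P$, $\nu = \pi\boxtimes L$ on the $2d$-fold product $\bigtimes_{j=1}^d \mathcal{X}^{(j)} \times \bigtimes_{j=1}^d \mathcal{X}^{(j)}$, with the retained coordinate block being the copies indexed by $S$ in both the $x$ and $y$ slots. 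One should handle the usual conventions (if $P(x,y)>0$ but $L(x,y)=0$ the left side is $+\infty$ and nothing to prove; otherwise all terms are finite and the log-sum inequality applies verbatim).

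The only genuinely delicate point — and the step I expect to need the most care — is the bookkeeping that the keep-$S$-in operation on transition matrices corresponds exactly to marginalizing the edge measure, including checking that the denominators match: on the left we have $\pi$, and after marginalization we must get $\pi^{(S)}$ as the first marginal of $(\pi\boxtimes P)^{(S)}$, which holds because $P$ is stochastic so $\sum_{y}(\pi\boxtimes P)(x,y)=\pi(x)$ and marginalization commutes appropriately. Since $\pi$ is assumed positive, all the conditional objects are well-defined and no degenerate cases arise there. No stationarity or reversibility of $P$ or $L$, and no product structure of $\pi$, is needed — consistent with the remark appended to the statement — because the argument only uses that $P^{(S)}_\pi$ and $L^{(S)}_\pi$ are both built from the same reference measure $\pi$.
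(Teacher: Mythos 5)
Your proposal is correct and is essentially the paper's argument: both reduce to the log-sum inequality applied to the terms of $D_{KL}^{\pi}(P\|L)$ grouped by the retained block $(x^{(S)},y^{(S)})$, the paper writing this out directly while you package it as the probability-mass partition lemma applied to the edge measures $\pi\boxtimes P$ and $\pi\boxtimes L$ (the same edge-measure identification the paper itself uses for its Shearer-type lemma). Your bookkeeping identities $\pi^{(S)}\boxtimes P^{(S)}_\pi=(\pi\boxtimes P)^{(S)}$ and $\pi^{(S)}\boxtimes L^{(S)}_\pi=(\pi\boxtimes L)^{(S)}$ are exactly what Definition \ref{def:P-S} gives, so no gap remains.
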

\begin{proof}
	Observe that the space $\mathcal{X}^2$ can be partitioned as disjoint unions of $\Omega_{x^{(S)},y^{(S)}}$, which are given by
	\begin{align*}
		\mathcal{X}^2 &= \bigcup_{x^{(S)},y^{(S)} \in \mathcal{X}^{(S)}} \Omega_{x^{(S)},y^{(S)}}, \quad \Omega_{x^{(S)},y^{(S)}} := \bigcup_{x^{(-S)},y^{(-S)} \in \mathcal{X}^{(-S)}}((x^{(S)},x^{(-S)}),(y^{(S)},y^{(-S)})).
	\end{align*}
	This leads to
	\begin{align*}
		D_{KL}^{\pi}(P \| L) &= \sum_{x^{(S)},y^{(S)} \in \mathcal{X}^{(S)}} \sum_{x^{(-S)},y^{(-S)} \in \mathcal{X}^{(-S)}} \pi(x) P(x,y) \ln \left(\dfrac{\pi(x)P(x,y)}{\pi(x)L(x,y)}\right) \\
		&\geq \sum_{x^{(S)},y^{(S)} \in \mathcal{X}^{(S)}} \left(\sum_{x^{(-S)},y^{(-S)} \in \mathcal{X}^{(-S)}} \pi(x) P(x,y)\right) \ln \left(\dfrac{\sum_{x^{(-S)},y^{(-S)} \in \mathcal{X}^{(-S)}}\pi(x)P(x,y) }{\sum_{x^{(-S)},y^{(-S)} \in \mathcal{X}^{(-S)}}\pi(x)L(x,y) }\right) \\
		&= \sum_{x^{(S)},y^{(S)} \in \mathcal{X}^{(S)}}\pi^{(S)}(x^{(S)}) P^{(S)}_\pi(x^{(S)},y^{(S)}) \ln \left(\dfrac{\pi^{(S)}(x^{(S)}) P^{(S)}_\pi(x^{(S)},y^{(S)})}{\pi^{(S)}(x^{(S)}) L^{(S)}_\pi(x^{(S)},y^{(S)})}\right) \\
		&= D_{KL}^{\pi^{(S)}}(P^{(S)}_\pi \| L^{(S)}_\pi),
	\end{align*}
	where we apply the log-sum inequality.
\end{proof}

Next, we state the Shearer's lemma for KL divergence of Markov chains:

\begin{theorem}[Shearer's lemma for KL divergence of Markov chains]\label{thm:ShearerMC}
	Let $\pi = \otimes_{j=1}^d \pi^{(j)} \in \mathcal{P}(\mathcal{X})$ be a positive product distribution, $P, L = \otimes_{j=1}^d L_j \in \mathcal{L}(\mathcal{X})$ and $L_j \in \mathcal{L}(\mathcal{X}^{(j)})$ for $j \in \llbracket d \rrbracket$. Given a sequence $(S_i)_{i=1}^n$ with $S_i \subseteq \llbracket d \rrbracket$, where each $j \in \llbracket d \rrbracket$ belongs to at least $r$ of $S_i$. We have
	\begin{align*}
		D_{KL}^{\pi}(P \| L) \geq \dfrac{1}{r} \sum_{i=1}^n D_{KL}^{\pi^{(S_i)}}(P^{(S_i)}_\pi \| L^{(S_i)}).
	\end{align*}
	Note that $L^{(S_i)} = \otimes_{j \in S_i} L_j$. 
\end{theorem}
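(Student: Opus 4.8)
The plan is to mimic the standard proof of Shearer's lemma at the level of KL divergences, using the already-established partition lemma for Markov chains (Theorem \ref{thm:partitionMC}) as the basic building block, together with the chain rule / subadditivity structure coming from the product form of $\pi$ and $L$. The key point is that once we know the statement reduces to a purely combinatorial inequality among the numbers $D_{KL}^{\pi^{(S)}}(P^{(S)}_\pi \| L^{(S)})$ indexed by subsets $S$, we only need one structural property of these numbers, namely a ``monotone supermodularity'' or a chain-rule-type decomposition along coordinates.

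First I would set up the conditional decomposition. Fix an ordering of $\llbracket d \rrbracket$; using the chain rule of the KL divergence together with the fact that $\pi = \otimes_j \pi^{(j)}$ and $L = \otimes_j L_j$ are product, one writes, for any $S \subseteq \llbracket d \rrbracket$,
\begin{align*}
	D_{KL}^{\pi^{(S)}}(P^{(S)}_\pi \| L^{(S)}) = \sum_{k \in S} \Delta_k(S \cap [k]),
\end{align*}
where $[k] := \{1,\ldots,k\}$ and $\Delta_k(T)$ denotes the conditional contribution of coordinate $k$ given the coordinates in $T \setminus \{k\}$ — concretely the expected conditional KL divergence between the $k$th conditional transition law under $P^{(T)}_\pi$ and $L_k$. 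The crucial inequality, which follows from the partition lemma applied to the conditional edge measures (equivalently, from the log-sum inequality applied fiberwise), is monotonicity in the conditioning set:
\begin{align*}
	\Delta_k(T) \geq \Delta_k(T') \quad \text{whenever } k \in T \subseteq T'.
\end{align*}
This is the Markov-chain analogue of ``conditioning reduces entropy'' in the probability-mass version of Shearer; I expect verifying this monotonicity carefully — making sure the conditioning and marginalization interact correctly with the edge measure $\pi \boxtimes P$, and that the product structure of $L$ is what makes the $L$-side factor cleanly — to be the main obstacle. Everything else is bookkeeping.

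Given the monotonicity, the combinatorial finish is the classical Shearer argument: for each $i$, order the elements of $S_i$ compatibly with the global ordering and write $D_{KL}^{\pi^{(S_i)}}(P^{(S_i)}_\pi \| L^{(S_i)}) = \sum_{k \in S_i} \Delta_k(S_i \cap [k]) \leq \sum_{k \in S_i} \Delta_k([k])$, using monotonicity with $S_i \cap [k] \subseteq [k]$. Summing over $i$ and interchanging the order of summation, each coordinate $k$ appears in at least $r$ of the sets $S_i$, so
\begin{align*}
	\sum_{i=1}^n D_{KL}^{\pi^{(S_i)}}(P^{(S_i)}_\pi \| L^{(S_i)}) \leq \sum_{i=1}^n \sum_{k \in S_i} \Delta_k([k]) \geq r \sum_{k=1}^d \Delta_k([k]) = r\, D_{KL}^{\pi}(P \| L),
\end{align*}
wait — one must be careful with the direction: since each $k$ lies in $\ge r$ of the $S_i$ and all $\Delta_k([k]) \ge 0$, the double sum is $\ge r \sum_k \Delta_k([k])$, but we have upper-bounded the left side by the double sum, so this does not immediately close. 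The correct route is to instead lower-bound: apply Theorem \ref{thm:partitionMC} directly to get $D_{KL}^{\pi}(P\|L) \ge D_{KL}^{\pi^{(S_i)}}(P^{(S_i)}_\pi\|L^{(S_i)})$ is too weak; rather one uses the decomposition on the \emph{left} side, $D_{KL}^\pi(P\|L) = \sum_{k=1}^d \Delta_k([k])$, and on the right side the per-$i$ \emph{lower} bound $D_{KL}^{\pi^{(S_i)}}(P^{(S_i)}_\pi \| L^{(S_i)}) \le \sum_{k \in S_i}\Delta_k([k])$ — no, for Shearer we need the right side bounded above by the left times $r$, so we need $\sum_i \sum_{k\in S_i}\Delta_k(S_i\cap[k]) \le r\sum_k \Delta_k([k])$. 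Using $\Delta_k(S_i\cap[k]) \ge \Delta_k([k])$? That is backwards. The resolution, exactly as in the classical proof, is to apply the monotonicity with the \emph{smaller} conditioning set on the right: write instead $D_{KL}^{\pi^{(S_i)}}(P^{(S_i)}_\pi\|L^{(S_i)}) = \sum_{k\in S_i}\Delta_k(S_i\cap[k])$ and use that each term equals a conditional divergence which, since $S_i \cap [k] \subseteq [k]$ and conditioning on \emph{more} decreases it, satisfies $\Delta_k(S_i\cap[k]) \le \Delta_k(S_i \cap [k])$ — trivial — so instead I flip the ordering: for each $i$, enumerate $S_i$ in \emph{decreasing} global order, so that the running conditioning set for $k$ inside $S_i$ is $S_i \cap \{k,k+1,\ldots,d\} \supseteq$ nothing useful either. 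I will therefore follow precisely the GLSS15 argument (cited for \eqref{eq:Shearerpm}): fix the order $1<\cdots<d$, and for the \emph{global} divergence use $D_{KL}^\pi(P\|L) = \sum_k \Delta_k(\{1,\ldots,d\})$; for each $S_i$ use $D_{KL}^{\pi^{(S_i)}}(P^{(S_i)}_\pi\|L^{(S_i)}) = \sum_{k\in S_i}\Delta_k(S_i\cap[k])$ and monotonicity gives $\Delta_k(S_i\cap[k]) \ge \Delta_k([k]) \ge \Delta_k(\{1,\dots,d\})$. Summing, $\sum_i D_{KL}^{\pi^{(S_i)}}(\cdots) \ge \sum_i\sum_{k\in S_i}\Delta_k(\{1,\dots,d\}) \ge r\sum_k \Delta_k(\{1,\dots,d\}) = r D_{KL}^\pi(P\|L)$, which is the \emph{reverse} of what is claimed. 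Hence the claimed inequality must rely on the \emph{opposite} monotonicity, $\Delta_k(T) \le \Delta_k(T')$ for $T \subseteq T'$ — i.e. here conditioning \emph{increases} the divergence because the reference chain $L$ is fixed-product while $P$ is not, which is the content of Theorem \ref{thm:partitionMC} itself read the right way. I would therefore first pin down the correct direction of the conditional monotonicity by carefully deriving it from Theorem \ref{thm:partitionMC}, then the combinatorics is the standard Shearer summation; the sign-chasing in this last paragraph is exactly the subtle point I would nail down first before writing anything else.
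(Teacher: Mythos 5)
Your proposal has a genuine gap, and you name it yourself: the direction of the conditional monotonicity, which on your route is the entire mathematical content of the theorem, is never established. As written, you first assert $\Delta_k(T)\ge\Delta_k(T')$ for $T\subseteq T'$, later conclude the opposite must hold, in between derive the reverse of the claimed inequality, and end by postponing the ``sign-chasing''. The correct statement, for a fixed product reference, is that conditioning on more coordinates increases the expected conditional divergence, $\Delta_k(T)\le\Delta_k(T')$ for $T\subseteq T'$; this follows from convexity of the KL divergence in its first argument (the conditional law given fewer coordinates is an average of the conditional laws given more), not from Theorem \ref{thm:partitionMC}, which concerns marginalization of the joint divergence. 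But even with the sign fixed, your summation does not close under the stated hypothesis: it yields $\sum_i D_{KL}^{\pi^{(S_i)}}(P^{(S_i)}_\pi\|L^{(S_i)})\le\sum_{k}\lvert\{i: k\in S_i\}\rvert\,\Delta_k([k-1])$, and to dominate this by $r\,D_{KL}^{\pi}(P\|L)=r\sum_k\Delta_k([k-1])$ one needs each coordinate to lie in \emph{at most} $r$ (or exactly $r$) of the $S_i$; a cover with multiplicity ``at least $r$'' pushes the count the wrong way. This is worth flagging: the chain-rule proof of \eqref{eq:Shearerpm} naturally requires an at-most-$r$ or exact-$r$ cover (the Han special case $S_i=\llbracket d\rrbracket\setminus\{i\}$ is an exact cover), and for instance $S_1=\llbracket d\rrbracket$, $S_2=\{1\}$, $r=1$ would assert $D_{KL}^{\pi}(P\|L)\ge D_{KL}^{\pi}(P\|L)+D_{KL}^{\pi^{(1)}}(P^{(1)}_\pi\|L_1)$, so your persistent sign trouble is symptomatic of a real subtlety in the covering hypothesis, not mere bookkeeping.

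The paper's proof avoids all of this by a short reduction rather than re-deriving Shearer at the Markov-chain level. Since $\pi=\otimes_j\pi^{(j)}$ and $L=\otimes_jL_j$, the reference edge measure factorizes, $\pi\boxtimes L=\otimes_{j}(\pi^{(j)}\boxtimes L_j)$, so $D^{\pi}_{KL}(P\|L)=\widetilde D_{KL}(\pi\boxtimes P\,\|\,\otimes_j(\pi^{(j)}\boxtimes L_j))$ is a plain KL divergence between probability masses on $\bigtimes_j(\mathcal X^{(j)}\times\mathcal X^{(j)})$, the $j$th coordinate being the pair $(x^j,y^j)$. The probability-mass Shearer lemma \eqref{eq:Shearerpm} is then applied verbatim to these two measures, and the $S_i$-marginals are re-identified through Definition \ref{def:P-S}: $(\pi\boxtimes P)^{(S_i)}=\pi^{(S_i)}\boxtimes P^{(S_i)}_\pi$ and $(\otimes_j(\pi^{(j)}\boxtimes L_j))^{(S_i)}=\otimes_{j\in S_i}(\pi^{(j)}\boxtimes L_j)$. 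No chain rule or conditional monotonicity for Markov chains is needed; the combinatorics and the covering hypothesis are inherited wholesale from the cited result. If you wish to complete your own route, prove the monotonicity in the direction indicated above and run the summation for exact (or fractional-partition) covers; but the edge-measure reduction is the intended, and much shorter, argument.
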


\begin{remark}[Han's inequality for KL divergence of Markov chains]
	In the special case of taking $S_i = \llbracket d \rrbracket \backslash \{i\}$ and $n = d$ so that $r = d - 1$, we obtain a Han's inequality of the form
	\begin{align*}
		D_{KL}^{\pi}(P \| L) \geq \dfrac{1}{d-1} \sum_{i=1}^n D_{KL}^{\pi^{(-i)}}(P^{(-i)}_\pi \| L^{(-i)}).
	\end{align*}
\end{remark}
\begin{proof}
	Using the Shearer's lemma for KL divergence of probability masses in \eqref{eq:Shearerpm}, we see that
	\begin{align*}
		D_{KL}^{\pi}(P \| \otimes_{j=1}^d L_j) &= \widetilde{D}_{KL}(\pi \boxtimes P \| \otimes_{j=1}^d (\pi^{(j)} \boxtimes L_j)) \\
		&\geq \dfrac{1}{r} \sum_{i=1}^n \widetilde{D}_{KL}((\pi \boxtimes P)^{(S_i)} \| \otimes_{j \in S_i} (\pi^{(j)} \boxtimes L_j))  \\
		&= \dfrac{1}{r} \sum_{i=1}^n D_{KL}^{\pi^{(S_i)}}(P^{(S_i)}_\pi \| \otimes_{j \in S_i} L_j).
	\end{align*}
\end{proof}

For $j \in S_i$, if we consider the $j$th marginal transition matrix of $P^{(S_i)}_\pi$ with respect to $\pi^{(S_i)}$, we compute that to be
\begin{align}\label{eq:PSij}
	(P^{(S_i)}_\pi)^{(j)}_{\pi^{(S_i)}} = P^{(j)}_\pi.
\end{align}
Using again Theorem \ref{thm:closest} leads to
\begin{align*}
	\mathbb{I}^{\pi}(P) &= D_{KL}^{\pi}(P \| \otimes_{j=1}^d P^{(j)}_\pi), \\
	\mathbb{I}^{\pi^{(S_i)}}(P^{(S_i)}_\pi) &= D_{KL}^{\pi^{(S_i)}}(P^{(S_i)}_\pi \| \otimes_{j \in S_i} (P^{(S_i)}_\pi)^{(j)}_{\pi^{(S_i)}}) = D_{KL}^{\pi^{(S_i)}}(P^{(S_i)}_\pi \| \otimes_{j \in S_i} P^{(j)}_\pi).
\end{align*}
Applying these two equalities to Theorem \ref{thm:ShearerMC} by taking $L_j = P^{(j)}_\pi$ therein leads to

\begin{corollary}[Shearer's lemma for distance to independence of $P$ with respect to $\pi$]\label{cor:Shearer}
	Let $\pi = \otimes_{j=1}^d \pi^{(j)} \in \mathcal{P}(\mathcal{X})$ be a positive product distribution, $P, L = \otimes_{j=1}^d L_j \in \mathcal{L}(\mathcal{X})$ and $L_j \in \mathcal{L}(\mathcal{X}^{(j)})$ for $j \in \llbracket d \rrbracket$. Let $(S_i)_{i=1}^n$ be a sequence with $S_i \subseteq \llbracket d \rrbracket$, where each $j \in \llbracket d \rrbracket$ belongs to at least $r$ of $S_i$. We have
	\begin{align*}
		\mathbb{I}^{\pi}(P) \geq \dfrac{1}{r} \sum_{i=1}^n \mathbb{I}^{\pi^{(S_i)}}(P^{(S_i)}_\pi).
	\end{align*}
	Equality holds if $P$ is itself a product chain where both sides equal to zero.
\end{corollary}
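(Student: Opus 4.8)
The plan is to derive Corollary \ref{cor:Shearer} as a direct specialization of the Shearer's lemma for KL divergence of Markov chains (Theorem \ref{thm:ShearerMC}) together with the Pythagorean identity of Theorem \ref{thm:closest}, exactly as the surrounding text foreshadows. First I would apply Theorem \ref{thm:ShearerMC} with the particular choice $L_j = P^{(j)}_\pi$ for every $j \in \llbracket d \rrbracket$; note this is legitimate because each $P^{(j)}_\pi \in \mathcal{L}(\mathcal{X}^{(j)})$ and, since $\pi = \otimes_{j=1}^d \pi^{(j)}$ is a positive product distribution and $P$ need not be assumed $\pi$-stationary here, the hypotheses of Theorem \ref{thm:ShearerMC} are met with $L = \otimes_{j=1}^d P^{(j)}_\pi$. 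This gives
\begin{align*}
	D_{KL}^{\pi}(P \| \otimes_{j=1}^d P^{(j)}_\pi) \geq \dfrac{1}{r} \sum_{i=1}^n D_{KL}^{\pi^{(S_i)}}\!\left(P^{(S_i)}_\pi \,\middle\|\, \otimes_{j \in S_i} P^{(j)}_\pi\right).
\end{align*}

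Next I would identify both sides with distances to independence. For the left side, Theorem \ref{thm:closest} item \eqref{it:Py} tells us that $\otimes_{j=1}^d P^{(j)}_\pi$ is precisely the closest product chain, so $D_{KL}^{\pi}(P \| \otimes_{j=1}^d P^{(j)}_\pi) = \mathbb{I}^{\pi}(P)$. For the right side, the key observation is the identity \eqref{eq:PSij}, namely $(P^{(S_i)}_\pi)^{(j)}_{\pi^{(S_i)}} = P^{(j)}_\pi$ for $j \in S_i$; this says that the $j$th marginal of the keep-$S_i$-in chain $P^{(S_i)}_\pi$ (with respect to $\pi^{(S_i)}$) coincides with the $j$th marginal of $P$ itself. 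Feeding this into Theorem \ref{thm:closest} applied to the chain $P^{(S_i)}_\pi$ on the product space $\mathcal{X}^{(S_i)} = \bigtimes_{j \in S_i} \mathcal{X}^{(j)}$ with product distribution $\pi^{(S_i)} = \otimes_{j \in S_i} \pi^{(j)}$ yields $\mathbb{I}^{\pi^{(S_i)}}(P^{(S_i)}_\pi) = D_{KL}^{\pi^{(S_i)}}(P^{(S_i)}_\pi \| \otimes_{j \in S_i} P^{(j)}_\pi)$. Substituting both identities into the displayed inequality produces $\mathbb{I}^{\pi}(P) \geq \frac{1}{r} \sum_{i=1}^n \mathbb{I}^{\pi^{(S_i)}}(P^{(S_i)}_\pi)$, which is the claim. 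Finally, for the equality clause, if $P = \otimes_{j=1}^d M_j$ is a product chain then $P^{(j)}_\pi = M_j$, so $\mathbb{I}^\pi(P) = D_{KL}^\pi(P \| \otimes_j M_j) = 0$ by Proposition \ref{prop:Ifproperty}, and likewise each $P^{(S_i)}_\pi = \otimes_{j \in S_i} M_j$ is a product chain so $\mathbb{I}^{\pi^{(S_i)}}(P^{(S_i)}_\pi) = 0$, making both sides zero.

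There is essentially no obstacle here: the corollary is a bookkeeping consequence of two already-established theorems. The only point requiring a modicum of care is verifying \eqref{eq:PSij} — that marginalizing to $\mathcal{X}^{(j)}$ commutes appropriately whether one first restricts to the coordinates in $S_i$ and then to $j$, or goes directly to $j$ — and confirming that $\pi^{(S_i)}$ inherits the product structure $\otimes_{j \in S_i} \pi^{(j)}$ from $\pi = \otimes_{j=1}^d \pi^{(j)}$, which it does since marginalizing a product distribution onto a subset of coordinates simply discards the other factors. Both are immediate from the definitions in Definition \ref{def:Pi} and Definition \ref{def:P-S} and the remark that $(\otimes_{j=1}^d \nu_j)^{(S)} = \otimes_{j \in S} \nu_j$, so the proof is short.
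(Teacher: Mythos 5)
Your proof is correct and follows essentially the same route as the paper: apply Theorem \ref{thm:ShearerMC} with $L_j = P^{(j)}_\pi$, use the identity \eqref{eq:PSij} together with Theorem \ref{thm:closest} to rewrite both sides as distances to independence, and note the trivial equality case for product chains. No gaps; your extra care about \eqref{eq:PSij} and the product structure of $\pi^{(S_i)}$ only makes explicit what the paper states as a direct computation.
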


\begin{remark}[Han's inequality for distance to independence of $P$ with respect to $\pi$]
	In the special case of taking $S_i = \llbracket d \rrbracket \backslash \{i\}$ and $n = d$ so that $r = d - 1$, we obtain a Han's inequality of the form
	\begin{align*}
		\mathbb{I}^{\pi}(P) \geq \dfrac{1}{d-1} \sum_{i=1}^d \mathbb{I}^{\pi^{(-i)}}(P^{(-i)}_\pi).
	\end{align*}
\end{remark}

Intuitively, we can understand that the distance to independence of $P$ with respect to $\pi$ is at least greater than the ``average" distances to independence of the keep-$S_i$-in transition matrices.

\subsection{$(S_i)_{i=1}^n$-factorizable transition matrices and the distance to $(S_i)_{i=1}^n$-factorizability}\label{subsec:partitionfactor}

Throughout this subsection, we consider a mutually exclusive partition of $\llbracket d \rrbracket$ that we denote by $(S_i)_{i=1}^n$. We also assume that $|S_i| \geq 1$ for all $i$, and hence $n \in \llbracket d \rrbracket$. Intuitively, we can understand that $\llbracket d \rrbracket$ is partitioned into $n$ blocks with each block being $S_i$. The aim of this subsection is to find the closest $(S_i)_{i=1}^n$-factorizable transition matrix to a given multivariate $P$ and hence to compute the distance to $(S_i)_{i=1}^n$-factorizability of $P$. In other words, we are looking for transition matrices which are independent across different blocks but possibly dependent within each block of coordinates.

We now define the set of $(S_i)_{i=1}^n$-factorizable transition matrices.
\begin{definition}[$(S_i)_{i=1}^n$-factorizable transition matrices]\label{def:Sifactor}
	Consider a mutually exclusive partition $(S_i)_{i=1}^n$ of $\llbracket d \rrbracket$ with $|S_i| \geq 1$. A transition matrix $P \in \mathcal{L}(\mathcal{X})$ is said to be $(S_i)_{i=1}^n$-factorizable if there exists $L_i \in \mathcal{L}(\mathcal{X}^{(S_i)})$ such that $P$ can be written as
	\begin{align*}
		P = \otimes_{i=1}^n L_i.
	\end{align*}
	We write $\mathcal{L}_{\otimes_{i=1}^n S_i}(\mathcal{X})$ to be the set of all $(S_i)_{i=1}^n$-factorizable transition matrices. In particular, we write  $ \mathcal{L}_{\otimes}(\mathcal{X}) := \mathcal{L}_{\otimes_{i=1}^d \{i\}}(\mathcal{X})$ to be the set of product transition matrices on $\mathcal{X}$.
\end{definition}

It is obvious to see that $\mathcal{L}_{\otimes}(\mathcal{X}) \subseteq \mathcal{L}_{\otimes_{i=1}^n S_i}(\mathcal{X})$ for any choice of the partition $(S_i)_{i=1}^n$.

Next, we consider the information projection of $P$ onto the space $\mathcal{L}_{\otimes_{i=1}^n S_i}(\mathcal{X})$. We develop a Pythagorean identity in this case:

\begin{theorem}(Pythagorean identity)\label{thm:Pythfact}
	Consider a mutually exclusive partition $(S_i)_{i=1}^n$ of $\llbracket d \rrbracket$ with $|S_i| \geq 1$. Let $\pi \in \mathcal{P}(\mathcal{X})$ be a positive probability mass, $P \in \mathcal{L}(\mathcal{X})$, $L_i \in \mathcal{L}(\mathcal{X}^{(S_i)})$ for $i \in \llbracket n \rrbracket$. We then have
	\begin{align*}
		D_{KL}^{\pi}(P \| \otimes_{i=1}^n L_i) &= D_{KL}^{\pi}(P \| \otimes_{i=1}^n P^{(S_i)}_\pi) + D_{KL}^{\pi}(\otimes_{i=1}^n P^{(S_i)}_\pi \| \otimes_{i=1}^n L_i) \\
		&= D_{KL}^{\pi}(P \| \otimes_{i=1}^n P^{(S_i)}_\pi) + \sum_{i=1}^n D_{KL}^{\pi^{(S_i)}}(P^{(S_i)}_\pi \| L_i),
	\end{align*}
	where we recall that $P^{(S_i)}_\pi$ is the keep-$S_i$-in transition matrices of $P$ with respect to $\pi$ in Definition \ref{def:P-S}, while $D_{KL}^{\pi^{(S_i)}}(P^{(S_i)}_\pi \| L_i)$ is weighted by $\pi^{(S_i)}$, the keep-$S_i$-in marginal distribution of $\pi$. In other words, $\otimes_{i=1}^n P^{(S_i)}_\pi$, the tensor product of the keep-$S_i$-in transition matrices, is the unique minimizer of 
	\begin{align*}
		\min_{L_i \in \mathcal{L}(\mathcal{X}^{(S_i)})} D_{KL}^{\pi}(P \| \otimes_{i=1}^n L_i) = D_{KL}^{\pi}(P \| \otimes_{i=1}^n P^{(S_i)}_\pi).
	\end{align*}
\end{theorem}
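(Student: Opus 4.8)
The plan is to reduce this factorizable version of the Pythagorean identity to the already-established product-chain version in Theorem~\ref{thm:closest}. The key observation is that a mutually exclusive partition $(S_i)_{i=1}^n$ of $\llbracket d \rrbracket$ lets us regroup the coordinates: define a new product state space $\widetilde{\mathcal{X}} := \bigtimes_{i=1}^n \widetilde{\mathcal{X}}^{(i)}$ where $\widetilde{\mathcal{X}}^{(i)} := \mathcal{X}^{(S_i)}$, so that $\mathcal{X}$ and $\widetilde{\mathcal{X}}$ are the same set under the natural bijection that sorts coordinates into blocks. Under this identification, $P \in \mathcal{L}(\mathcal{X})$ becomes a transition matrix on $\widetilde{\mathcal{X}}$, the $(S_i)_{i=1}^n$-factorizable matrices $\otimes_{i=1}^n L_i$ become exactly the product chains on $\widetilde{\mathcal{X}}$ in the sense of Definition~\ref{def:Sifactor}$=$Definition of product form on $\widetilde{\mathcal{X}}$, and — crucially — the keep-$S_i$-in transition matrix $P^{(S_i)}_\pi$ from Definition~\ref{def:P-S} coincides with the $i$th marginal transition matrix $\widetilde{P}^{(i)}_{\widetilde\pi}$ of $P$ viewed on $\widetilde{\mathcal{X}}$, as in Definition~\ref{def:Pi}, with $\widetilde\pi$ being $\pi$ regarded as a measure on $\widetilde{\mathcal{X}}$. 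This last point is immediate by comparing the two defining formulas: marginalizing out all blocks $S_j$ with $j \neq i$ is the same as marginalizing out the single ``super-coordinate'' $\widetilde{\mathcal{X}}^{(j)}$, $j \neq i$.

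Once these identifications are in place, I would simply invoke Theorem~\ref{thm:closest} item~\eqref{it:Py} on the space $\widetilde{\mathcal{X}}$ with dimension $n$, positive probability mass $\widetilde\pi$, transition matrix $P$, and candidate factors $L_i \in \mathcal{L}(\widetilde{\mathcal{X}}^{(i)})$. That theorem yields precisely
\begin{align*}
	D_{KL}^{\pi}(P \| \otimes_{i=1}^n L_i) = D_{KL}^{\pi}(P \| \otimes_{i=1}^n P^{(S_i)}_\pi) + \sum_{i=1}^n D_{KL}^{\pi^{(S_i)}}(P^{(S_i)}_\pi \| L_i),
\end{align*}
after translating back, where I also use that $\widetilde\pi^{(i)}$, the $i$th marginal of $\widetilde\pi$ on $\widetilde{\mathcal{X}}^{(i)}$, equals $\pi^{(S_i)}$, the keep-$S_i$-in marginal of $\pi$. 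The intermediate equality involving $D_{KL}^{\pi}(\otimes_{i=1}^n P^{(S_i)}_\pi \| \otimes_{i=1}^n L_i) = \sum_{i=1}^n D_{KL}^{\pi^{(S_i)}}(P^{(S_i)}_\pi \| L_i)$ follows from Proposition~\ref{prop:Ifproperty} item~\eqref{it:Ifchain}, the chain rule for KL divergence of product chains, applied on $\widetilde{\mathcal{X}}$ with the product distribution $\widetilde\pi^{(1)} \otimes \cdots \otimes \widetilde\pi^{(n)}$; note this requires the marginals of $\pi$ to multiply correctly, but since we only ever weight the $i$th term by its own marginal $\pi^{(S_i)}$, the argument goes through exactly as in the proof of Theorem~\ref{thm:closest} even when $\pi$ itself is not a product measure. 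The uniqueness of the minimizer and the nonnegativity of each summand then give the final claim that $\otimes_{i=1}^n P^{(S_i)}_\pi$ uniquely attains $\min_{L_i} D_{KL}^{\pi}(P \| \otimes_{i=1}^n L_i)$, via Proposition~\ref{prop:Ifproperty} item~\eqref{it:Ifnonnegative}.

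Alternatively, if one prefers not to route through a relabeling, the identity can be proved directly by mimicking the computation in the proof of Theorem~\ref{thm:closest} item~\eqref{it:Py} verbatim: split into the non-absolutely-continuous case (where both sides are $+\infty$ because $P^{(S_i)}_\pi(x^{(S_i)},y^{(S_i)}) > 0$ whenever $P(x,y)>0$) and the absolutely continuous case, in which one writes $\ln\bigl(P(x,y) / (\otimes_i L_i)(x,y)\bigr) = \ln\bigl(P(x,y)/(\otimes_i P^{(S_i)}_\pi)(x,y)\bigr) + \sum_i \ln\bigl(P^{(S_i)}_\pi(x^{(S_i)},y^{(S_i)})/L_i(x^{(S_i)},y^{(S_i)})\bigr)$, multiplies by $\pi(x)P(x,y)$, and sums, using that $\sum_{x^{(-S_i)},y^{(-S_i)}} \pi(x)P(x,y) = \pi^{(S_i)}(x^{(S_i)}) P^{(S_i)}_\pi(x^{(S_i)},y^{(S_i)})$ to collapse each of the $n$ cross terms to $D_{KL}^{\pi^{(S_i)}}(P^{(S_i)}_\pi \| L_i)$. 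I expect the only real subtlety — and the main thing to get right — is the bookkeeping identifying $P^{(S_i)}_\pi$ with the marginal transition matrix on the regrouped space and confirming that the $i$th cross term is genuinely weighted by $\pi^{(S_i)}$ and not by some other marginal; once that is nailed down, everything else is a transcription of the $n=d$ argument already carried out in the excerpt, so no new ideas are needed.
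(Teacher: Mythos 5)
Your proposal is correct, and your primary route differs from the paper's in an interesting way. The paper proves Theorem \ref{thm:Pythfact} by redoing the direct computation of Theorem \ref{thm:closest} item \eqref{it:Py} with blocks $S_i$ in place of single coordinates (i.e., exactly your ``alternative'' route: expand the logarithm, multiply by $\pi(x)P(x,y)$, and collapse each cross term via $\sum_{x^{(-S_i)},y^{(-S_i)}} \pi(x)P(x,y) = \pi^{(S_i)}(x^{(S_i)})P^{(S_i)}_\pi(x^{(S_i)},y^{(S_i)})$); indeed the paper views Theorem \ref{thm:closest} as the special case $S_i=\{i\}$ rather than deducing the general case from it. Your main argument goes the other way: regroup the coordinates into super-coordinates $\widetilde{\mathcal{X}}^{(i)} = \mathcal{X}^{(S_i)}$, check that Definition \ref{def:P-S} for the partition coincides with Definition \ref{def:Pi} on the regrouped space and that $\widetilde{\pi}^{(i)} = \pi^{(S_i)}$, and then invoke Theorem \ref{thm:closest} verbatim with $d$ replaced by $n$. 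That reduction is legitimate (a mutually exclusive partition is exactly a relabeling of the product structure), buys you the identity, the uniqueness of the minimizer, and the degenerate $+\infty$ case for free, and avoids transcribing the computation. One small point you handled correctly but should keep explicit: the intermediate equality $D_{KL}^{\pi}(\otimes_{i=1}^n P^{(S_i)}_\pi \| \otimes_{i=1}^n L_i) = \sum_{i=1}^n D_{KL}^{\pi^{(S_i)}}(P^{(S_i)}_\pi \| L_i)$ cannot be cited directly from Proposition \ref{prop:Ifproperty} item \eqref{it:Ifchain}, which is stated only for product $\pi$; as you note, the same marginalization computation gives it for general positive $\pi$ because each summand only ever sees the marginal $\pi^{(S_i)}$, so this is a presentational wrinkle rather than a gap.
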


\begin{remark}
	In fact we have already seen a special case earlier. If we take $n = d$ and $S_i = \{i\}$, we recover the Pythagorean identity in Theorem \ref{thm:closest}.
\end{remark}

\begin{proof}
	\begin{align*}
		D_{KL}^{\pi}(P \| \otimes_{i=1}^n L_i) &= D_{KL}^{\pi}(P \| \otimes_{i=1}^n P^{(S_i)}_\pi) + \sum_{x,y} \pi(x) P(x,y) \ln\left(\dfrac{(\otimes_{i=1}^n P^{(S_i)}_\pi)(x,y)}{(\otimes_{i=1}^n L_i)(x,y)}\right) \\
		&= D_{KL}^{\pi}(P \| \otimes_{i=1}^n P^{(S_i)}_\pi) + \sum_{i=1}^n \sum_{x^{(S_i)},y^{(S_i)}}  \pi^{(S_i)}(x^{(S_i)}) P^{(S_i)}_\pi(x^{(S_i)},y^{(S_i)}) \ln\left(\dfrac{P^{(S_i)}_\pi(x^{(S_i)},y^{(S_i)})}{L_i(x^{(S_i)},y^{(S_i)})}\right) \\
		&= D_{KL}^{\pi}(P \| \otimes_{i=1}^n P^{(S_i)}_\pi) + \sum_{i=1}^n D_{KL}^{\pi^{(S_i)}}(P^{(S_i)}_\pi \| L_i).
	\end{align*}
\end{proof}

Using Theorem \ref{thm:Pythfact} we now introduce a distance to the space $\mathcal{L}_{\otimes_{i=1}^n S_i}(\mathcal{X})$ of a given multivariate $P$:

\begin{definition}[Distance to $(S_i)_{i=1}^n$-factorizability of $P$ with respect to $\pi$]
	Consider a mutually exclusive partition $(S_i)_{i=1}^n$ of $\llbracket d \rrbracket$ with $|S_i| \geq 1$. Let $\pi \in \mathcal{P}(\mathcal{X})$ be a positive probability mass and $P \in \mathcal{L}(\mathcal{X})$. We define
	\begin{align*}
		\mathbb{I}^{\pi}(P, \mathcal{L}_{\otimes_{i=1}^n S_i}(\mathcal{X})) := \min_{L_i \in \mathcal{L}(\mathcal{X}^{(S_i)})} D_{KL}^{\pi}(P \| \otimes_{i=1}^n L_i) = D_{KL}^{\pi}(P \| \otimes_{i=1}^n P^{(S_i)}_\pi).
	\end{align*}
	In particular, if we take $S_i = \{i\}$ and $n = d$, we recover the distance to independence of $P$ with respect to $\pi$ as introduced in \eqref{def:IfP}:
	\begin{align*}
		\mathbb{I}^{\pi}(P) = \mathbb{I}^{\pi}(P, \mathcal{L}_{\otimes}(\mathcal{X})) = D_{KL}^{\pi}(P \| \otimes_{i=1}^d P^{(i)}_\pi).
	\end{align*}
\end{definition}

Finally, the above results give an interesting decomposition of the distance to independence of $P$ in terms of the distance to $(S_i)_{i=1}^n$-factorizability of $P$.

\begin{corollary}[Decomposition of the distance to independence of $P$]
	Consider a mutually exclusive partition $(S_i)_{i=1}^n$ of $\llbracket d \rrbracket$ with $|S_i| \geq 1$. Let $\pi \in \mathcal{P}(\mathcal{X})$ be a positive probability mass and $P \in \mathcal{L}(\mathcal{X})$. We have	
	\begin{align*}
		\underbrace{\mathbb{I}^{\pi}(P)}_{\text{distance to independence of $P$}} &= \underbrace{\mathbb{I}^{\pi}(P, \mathcal{L}_{\otimes_{i=1}^n S_i}(\mathcal{X}))}_{\text{distance to $(S_i)_{i=1}^n$-factorizability of $P$}} + \underbrace{\mathbb{I}^{\pi}(\otimes_{i=1}^n P^{(S_i)}_\pi)}_{\text{distance to independence of $\otimes_{i=1}^n P^{(S_i)}_\pi$}} \\
		&= \underbrace{\mathbb{I}^{\pi}(P, \mathcal{L}_{\otimes_{i=1}^n S_i}(\mathcal{X}))}_{\text{distance to $(S_i)_{i=1}^n$-factorizability of $P$}} + \sum_{i=1}^n \underbrace{\mathbb{I}^{\pi^{(S_i)}}(P^{(S_i)}_\pi)}_{\text{distance to independence of $P^{(S_i)}_\pi$}}.		
	\end{align*}
\end{corollary}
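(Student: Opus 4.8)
The plan is to derive this corollary directly from the Pythagorean identity of Theorem \ref{thm:Pythfact} together with the Pythagorean identity of Theorem \ref{thm:closest}, by peeling the decomposition in two stages: first from $P$ down to its block-factorized projection $\otimes_{i=1}^n P^{(S_i)}_\pi$, then from that projection down to the fully product chain $\otimes_{i=1}^d P^{(i)}_\pi$. The key observation linking the two stages is the compatibility of marginalization, namely the analogue of \eqref{eq:PSij}: for $j \in S_i$, the $j$th marginal transition matrix of $P^{(S_i)}_\pi$ with respect to $\pi^{(S_i)}$ equals $P^{(j)}_\pi$, i.e. $(P^{(S_i)}_\pi)^{(j)}_{\pi^{(S_i)}} = P^{(j)}_\pi$, and correspondingly $(\pi^{(S_i)})^{(j)} = \pi^{(j)}$.

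First I would apply Theorem \ref{thm:closest} item \eqref{it:Py} to $P$ with the choice $L_j = (P^{(S_i)}_\pi)^{(j)}_{\pi^{(S_i)}} = P^{(j)}_\pi$ for each $j$ in each block $S_i$; since $\otimes_{i=1}^n P^{(S_i)}_\pi = \otimes_{j=1}^d P^{(j)}_\pi$ by the compatibility identity and the fact that $(S_i)_{i=1}^n$ is a partition of $\llbracket d \rrbracket$, the Pythagorean identity gives
\begin{align*}
	\mathbb{I}^{\pi}(P) = D_{KL}^{\pi}(P \| \otimes_{j=1}^d P^{(j)}_\pi) = D_{KL}^{\pi}\bigl(P \,\bigl\|\, \otimes_{i=1}^n P^{(S_i)}_\pi\bigr) + D_{KL}^{\pi}\bigl(\otimes_{i=1}^n P^{(S_i)}_\pi \,\bigl\|\, \otimes_{j=1}^d P^{(j)}_\pi\bigr).
\end{align*}
The first term on the right is exactly $\mathbb{I}^{\pi}(P, \mathcal{L}_{\otimes_{i=1}^n S_i}(\mathcal{X}))$ by definition. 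For the second term, I would observe that $\otimes_{i=1}^n P^{(S_i)}_\pi$ is itself $\otimes_{i=1}^n \pi^{(S_i)}$-stationary by Proposition \ref{prop:PSergodic}, and that its $i$th block marginal with respect to $\otimes_{i=1}^n \pi^{(S_i)}$ is $P^{(S_i)}_\pi$ itself; hence by Theorem \ref{thm:Pythfact} (or directly by the definition of $\mathbb{I}^{\pi}$ for the chain $\otimes_{i=1}^n P^{(S_i)}_\pi$ viewed on the product of the $\mathcal{X}^{(S_i)}$) together with the chain rule of Proposition \ref{prop:Ifproperty} item \eqref{it:Ifchain}, we get
\begin{align*}
	D_{KL}^{\pi}\bigl(\otimes_{i=1}^n P^{(S_i)}_\pi \,\bigl\|\, \otimes_{j=1}^d P^{(j)}_\pi\bigr) = \mathbb{I}^{\pi}\bigl(\otimes_{i=1}^n P^{(S_i)}_\pi\bigr) = \sum_{i=1}^n D_{KL}^{\pi^{(S_i)}}\bigl(P^{(S_i)}_\pi \,\bigl\|\, \otimes_{j \in S_i} P^{(j)}_\pi\bigr) = \sum_{i=1}^n \mathbb{I}^{\pi^{(S_i)}}(P^{(S_i)}_\pi),
\end{align*}
where the last equality uses $(P^{(S_i)}_\pi)^{(j)}_{\pi^{(S_i)}} = P^{(j)}_\pi$ again together with Theorem \ref{thm:closest}. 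Combining these two displays yields both claimed forms of the decomposition.

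The main obstacle, such as it is, is the bookkeeping around the marginalization-compatibility identity $(P^{(S_i)}_\pi)^{(j)}_{\pi^{(S_i)}} = P^{(j)}_\pi$ and the fact that $\pi^{(S_i)}$ restricted further to coordinate $j$ gives $\pi^{(j)}$ — these are the glue ensuring the second-stage projection is consistent with the first. This is a routine unwinding of Definitions \ref{def:Pi} and \ref{def:P-S} (summing out coordinates in stages equals summing them out at once), analogous to \eqref{eq:PSij}, so I would state it as an immediate consequence rather than belabor it. No genuinely new estimate is needed; the corollary is a purely algebraic rearrangement of the two Pythagorean identities already established.
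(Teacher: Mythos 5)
Your overall two-stage plan is, in substance, the paper's own argument (the paper applies Theorem \ref{thm:Pythfact} once with $L_i=\otimes_{j\in S_i}P^{(j)}_\pi$ and then recognizes the three terms via Theorem \ref{thm:closest} and \eqref{eq:PSij}), but your justification of the first display has a genuine flaw. The claim ``$\otimes_{i=1}^n P^{(S_i)}_\pi=\otimes_{j=1}^d P^{(j)}_\pi$'' is false in general: $P^{(S_i)}_\pi$ need not be a product chain on $\mathcal{X}^{(S_i)}$, and the discrepancy between these two matrices is exactly the quantity $\mathbb{I}^{\pi}(\otimes_{i=1}^n P^{(S_i)}_\pi)=\sum_{i=1}^n\mathbb{I}^{\pi^{(S_i)}}(P^{(S_i)}_\pi)$ that the corollary isolates; if your identity held, the second term of your first display would vanish and the corollary would be trivialized (indeed contradicted). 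What is true, and what you need, is only the regrouping identity $\otimes_{i=1}^n\bigl(\otimes_{j\in S_i}P^{(j)}_\pi\bigr)=\otimes_{j=1}^d P^{(j)}_\pi$. Relatedly, Theorem \ref{thm:closest} item \eqref{it:Py} with $L_j=P^{(j)}_\pi$ returns only the tautology $\mathbb{I}^{\pi}(P)=\mathbb{I}^{\pi}(P)+0$; it cannot split $D^{\pi}_{KL}(P\|\otimes_{j=1}^d P^{(j)}_\pi)$ through the intermediate point $\otimes_{i=1}^n P^{(S_i)}_\pi$. The display you want is an instance of Theorem \ref{thm:Pythfact} with the choice $L_i=\otimes_{j\in S_i}P^{(j)}_\pi$, whose left-hand side equals $\mathbb{I}^{\pi}(P)$ by Theorem \ref{thm:closest} and the regrouping identity --- which is precisely the paper's one-line proof.

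Once the first display is obtained this way, your treatment of the cross term is correct in substance, with two repairs. First, the chain rule of Proposition \ref{prop:Ifproperty} item \eqref{it:Ifchain} is stated only for product $\pi$, whereas here $\pi$ is an arbitrary positive mass; the equality $D^{\pi}_{KL}(\otimes_{i=1}^n P^{(S_i)}_\pi\|\otimes_{i=1}^n L_i)=\sum_{i=1}^n D^{\pi^{(S_i)}}_{KL}(P^{(S_i)}_\pi\|L_i)$ nevertheless holds because the first argument is block-product, so summing over $y^{(S_l)}$ for $l\neq i$ collapses the weight to $\pi^{(S_i)}$ --- equivalently, apply Theorem \ref{thm:Pythfact} to $Q:=\otimes_{i=1}^n P^{(S_i)}_\pi$ after checking $Q^{(S_i)}_\pi=P^{(S_i)}_\pi$. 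Second, the marginals relevant to $\mathbb{I}^{\pi}(Q)$ are taken with respect to $\pi$, not $\otimes_{i=1}^n\pi^{(S_i)}$ (and Proposition \ref{prop:PSergodic} presumes $\pi$-stationarity of $P$, which the corollary does not assume); fortunately $Q^{(j)}_\pi=P^{(j)}_\pi$ by the same collapsing computation together with \eqref{eq:PSij}, so $D^{\pi}_{KL}(Q\|\otimes_{j=1}^d P^{(j)}_\pi)=\mathbb{I}^{\pi}(Q)$ as you claim. With these fixes your argument coincides with the paper's.
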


\begin{proof}
	Recall that $P^{(i)}_\pi$ is the $i$th marginal transition matrix of $P$ with respect to $\pi$. For arbitrary $(S_i)_{i=1}^n$, we take $L_i = \otimes_{j \in S_i} P^{(j)}_\pi$ in Theorem \ref{thm:Pythfact}. The desired result follows by recalling that with these choices,
	\begin{align*}
		D_{KL}^{\pi}(P \| \otimes_{i=1}^n L_i) = \mathbb{I}^{\pi}(P), \quad D_{KL}^{\pi^{(S_i)}}(P^{(S_i)}_\pi \| L_i) = \mathbb{I}^{\pi^{(S_i)}}(P^{(S_i)}_\pi).
	\end{align*}
\end{proof}

\tikzset{every picture/.style={line width=0.75pt}} 

\begin{figure}[H]\label{fig:product}
	\begin{center}
		\begin{tikzpicture}[x=0.75pt,y=0.75pt,yscale=-1,xscale=1]
			\draw   (101,198.5) .. controls (101,158.34) and (162.56,132) .. (213.83,132) .. controls (265.1,132) and (326.67,158.34) .. (326.67,198.5) .. controls (326.67,238.66) and (265.1,265) .. (213.83,265) .. controls (162.56,265) and (101,238.66) .. (101,198.5) -- cycle ;
			\draw   (218,193.5) .. controls (218,156.16) and (290.31,124) .. (354.83,124) .. controls (419.36,124) and (491.67,156.16) .. (491.67,193.5) .. controls (491.67,230.84) and (419.36,263) .. (354.83,263) .. controls (290.31,263) and (218,230.84) .. (218,193.5) -- cycle ;
			\draw    (270.67,196) .. controls (271.32,152.88) and (266.85,128.01) .. (276.08,81.85) ;
			\draw [shift={(276.67,79)}, rotate = 101.77] [fill={rgb, 255:red, 0; green, 0; blue, 0 }  ][line width=0.08]  [draw opacity=0] (10.72,-5.15) -- (0,0) -- (10.72,5.15) -- (7.12,0) -- cycle    ;
			\draw    (192.67,197) .. controls (193.33,153.44) and (220.12,119.68) .. (271.11,77.29) ;
			\draw [shift={(272.67,76)}, rotate = 140.41] [fill={rgb, 255:red, 0; green, 0; blue, 0 }  ][line width=0.08]  [draw opacity=0] (10.72,-5.15) -- (0,0) -- (10.72,5.15) -- (7.12,0) -- cycle    ;
			\draw   (275,72.5) .. controls (275,71.12) and (276.12,70) .. (277.5,70) .. controls (278.88,70) and (280,71.12) .. (280,72.5) .. controls (280,73.88) and (278.88,75) .. (277.5,75) .. controls (276.12,75) and (275,73.88) .. (275,72.5) -- cycle ;
			\draw    (359.67,196) .. controls (360.32,152.66) and (329.61,99.62) .. (285.68,75.1) ;
			\draw [shift={(283.67,74)}, rotate = 28.07] [fill={rgb, 255:red, 0; green, 0; blue, 0 }  ][line width=0.08]  [draw opacity=0] (10.72,-5.15) -- (0,0) -- (10.72,5.15) -- (7.12,0) -- cycle    ;
			\draw   (268,200.5) .. controls (268,199.12) and (269.12,198) .. (270.5,198) .. controls (271.88,198) and (273,199.12) .. (273,200.5) .. controls (273,201.88) and (271.88,203) .. (270.5,203) .. controls (269.12,203) and (268,201.88) .. (268,200.5) -- cycle ;
			\draw   (190,201.5) .. controls (190,200.12) and (191.12,199) .. (192.5,199) .. controls (193.88,199) and (195,200.12) .. (195,201.5) .. controls (195,202.88) and (193.88,204) .. (192.5,204) .. controls (191.12,204) and (190,202.88) .. (190,201.5) -- cycle ;
			\draw   (357,200.5) .. controls (357,199.12) and (358.12,198) .. (359.5,198) .. controls (360.88,198) and (362,199.12) .. (362,200.5) .. controls (362,201.88) and (360.88,203) .. (359.5,203) .. controls (358.12,203) and (357,201.88) .. (357,200.5) -- cycle ;
			\draw    (265.67,201) -- (199,201) ;
			\draw [shift={(196,201)}, rotate = 360] [fill={rgb, 255:red, 0; green, 0; blue, 0 }  ][line width=0.08]  [draw opacity=0] (10.72,-5.15) -- (0,0) -- (10.72,5.15) -- (7.12,0) -- cycle    ;
			
			\draw    (270.5,198) -- (354.83,200) ;
			\draw [shift={(357.83,200)}, rotate = 536.5699999999999] [fill={rgb, 255:red, 0; green, 0; blue, 0 }  ][line width=0.08]  [draw opacity=0] (10.72,-5.15) -- (0,0) -- (10.72,5.15) -- (7.12,0) -- cycle    ;
			
			\draw  [color={rgb, 255:red, 208; green, 2; blue, 27 }  ,draw opacity=1 ] (266.91,181) -- (266.91,193.9) -- (254,193.9) ;
			\draw  [color={rgb, 255:red, 208; green, 2; blue, 27 }  ,draw opacity=1 ] (354.83,180.82) -- (354.83,193.73) -- (341.93,193.73) ;
			\draw  [color={rgb, 255:red, 208; green, 2; blue, 27 }  ,draw opacity=1 ] (210.83,195.5) -- (197.93,195.5) -- (197.93,182.59) ;
			\draw  [color={rgb, 255:red, 208; green, 2; blue, 27 }  ,draw opacity=1 ] (288.13,193.9) -- (275.23,193.9) -- (274.77,181) ;
			
			\draw (280,49) node [anchor=north west][inner sep=0.75pt]   [align=left] {$P$};
			\draw (377,75) node [anchor=north west][inner sep=0.75pt]   [align=left] {$(\calL(\calX), D^{\pi}_{KL})$};
			\draw (146,207) node [anchor=north west][inner sep=0.75pt]   [align=left] {$\otimes_{i=1}^{n}P^{(S_i)}_\pi$};
			\draw (50,246) node [anchor=north west][inner sep=0.75pt]   [align=left] {$\calL_{\otimes_{i=1}^{n} S_i}(\calX)$};
			\draw (248,207) node [anchor=north west][inner sep=0.75pt]   [align=left] {$\otimes_{i=1}^{d}P^{(i)}_\pi$};
			\draw (455,239) node [anchor=north west][inner sep=0.75pt]   [align=left] {$\calL_{\otimes_{i=1}^{l} V_i}(\calX)$};
			\draw (360,207) node [anchor=north west][inner sep=0.75pt]   [align=left] {$\otimes_{i=1}^{l}P^{(V_i)}_\pi$};
			
		\end{tikzpicture}
		
		\caption{Visualizations of the set $\calL_{\otimes_{i=1}^{n} S_i}(\calX)$ and $\calL_{\otimes_{i=1}^{l} V_i}(\calX)$, where $(V_i)_{i=1}^l$ and $(S_i)_{i=1}^n$ are two partitions of $\llbracket d \rrbracket$. Note that $\mathcal{L}_{\otimes}(\mathcal{X}) \subseteq \calL_{\otimes_{i=1}^{n} S_i}(\calX) \cap \calL_{\otimes_{i=1}^{l} V_i}(\calX)$, and all the arrows are based upon the divergence $D^{\pi}_{KL}$. The Pythagorean identity of $P \in \mathcal{L}(\mathcal{X})$ is stated in Theorem \ref{thm:Pythfact}.}
	\end{center}
\end{figure}

\subsection{$(C_i)_{i=1}^n$-factorizable transition matrices with respect to a graph}\label{subsec:clique}

In the literature of graphical model and Markov random field, factorization of probability masses with respect to the cliques of a graph has been investigated, and culminates in the Hammersley-Clifford theorem, see for instance \cite[Chapter $11$]{W19}. In this vein, in this subsection we consider the problem of factorizability of a transition matrix with respect to the cliques of a graph.

Let us consider a given undirected graph $G = (V = \llbracket d \rrbracket, E)$. A set $C \subseteq \llbracket d \rrbracket$ is said to be a clique of $G$ if any pair of two vertices in $C$ are connected by an edge in $E$. Let $(C_i)_{i=1}^n$ be a set of cliques of the graph $G$, with possibly overlapping vertices, such that $\cup_{i=1}^n C_i = \llbracket d \rrbracket$.

We are ready to define the set of $(C_i)_{i=1}^n$-factorizable transition matrices with respect to the graph $G$:
\begin{definition}[$(C_i)_{i=1}^n$-factorizable transition matrices with respect to a graph $G$]\label{def:Cifactor}
	Let $(C_i)_{i=1}^n$ be a set of cliques of a graph $G$ on $\llbracket d \rrbracket$ such that $\cup_{i=1}^n C_i = \llbracket d \rrbracket$. A transition matrix $P \in \mathcal{L}(\mathcal{X})$ is said to be $(C_i)_{i=1}^n$-factorizable with respect to the graph $G$ and $(L_i)_{i=1}^n$ if there exists $L_i \in \mathcal{L}(\mathcal{X}^{(C_i)})$ such that $P$ can be written as, for $x,y \in \mathcal{X}$,
	\begin{align*}
		P(x,y) &\propto \prod_{i=1}^n L_i(x^{(C_i)}, y^{(C_i)}) \\
		&= \dfrac{1}{\sum_{y \in \mathcal{X}} \prod_{i=1}^n L_i(x^{(C_i)}, y^{(C_i)}) } \prod_{i=1}^n L_i(x^{(C_i)}, y^{(C_i)}) \\
		&=: \dfrac{1}{Z(x,(L_i)_{i=1}^n)} \prod_{i=1}^n L_i(x^{(C_i)}, y^{(C_i)}),
	\end{align*}
	where $Z(x,(L_i)_{i=1}^n)$ is the normalization constant of $P$ at $x$. We write $\mathcal{L}_{\otimes_{i=1}^n C_i}^G(\mathcal{X})$ to be the set of all $(C_i)_{i=1}^n$-factorizable transition matrices with respect to $G$. In particular, we note that $ \mathcal{L}_{\otimes}(\mathcal{X}) = \mathcal{L}_{\otimes_{i=1}^d \{i\}}^G(\mathcal{X})$ for any graph $G$.
\end{definition}

Let $(S_i)_{i=1}^n$ be a mutually exclusive partition of $\llbracket d \rrbracket$ as in Section \ref{subsec:partitionfactor}, and we choose a graph $G$ with $n$ disjoint blocks and the members of each $S_i$ form a clique within. With these choices we see that $\mathcal{L}_{\otimes_{i=1}^n S_i}(\mathcal{X}) = \mathcal{L}_{\otimes_{i=1}^n C_i}^G(\mathcal{X})$, and hence Definition \ref{def:Cifactor} is a generalization of Definition \ref{def:Sifactor}. These concepts are illustrated in Figure \ref{fig:CiSi}.

\begin{figure}[H]
	\centering
	\begin{minipage}{.5\textwidth}
		\centering
		\tikzset{every picture/.style={line width=0.75pt}} 
		
		\begin{tikzpicture}[x=0.75pt,y=0.75pt,yscale=-1,xscale=1]
			
			\draw   (149.5,270.25) .. controls (149.5,262.38) and (155.88,256) .. (163.75,256) .. controls (171.62,256) and (178,262.38) .. (178,270.25) .. controls (178,278.12) and (171.62,284.5) .. (163.75,284.5) .. controls (155.88,284.5) and (149.5,278.12) .. (149.5,270.25) -- cycle ;
			\draw   (149.5,367.25) .. controls (149.5,359.38) and (155.88,353) .. (163.75,353) .. controls (171.62,353) and (178,359.38) .. (178,367.25) .. controls (178,375.12) and (171.62,381.5) .. (163.75,381.5) .. controls (155.88,381.5) and (149.5,375.12) .. (149.5,367.25) -- cycle ;
			\draw   (207.5,317.25) .. controls (207.5,309.38) and (213.88,303) .. (221.75,303) .. controls (229.62,303) and (236,309.38) .. (236,317.25) .. controls (236,325.12) and (229.62,331.5) .. (221.75,331.5) .. controls (213.88,331.5) and (207.5,325.12) .. (207.5,317.25) -- cycle ;
			\draw   (267.5,271.25) .. controls (267.5,263.38) and (273.88,257) .. (281.75,257) .. controls (289.62,257) and (296,263.38) .. (296,271.25) .. controls (296,279.12) and (289.62,285.5) .. (281.75,285.5) .. controls (273.88,285.5) and (267.5,279.12) .. (267.5,271.25) -- cycle ;
			\draw   (267.5,367.25) .. controls (267.5,359.38) and (273.88,353) .. (281.75,353) .. controls (289.62,353) and (296,359.38) .. (296,367.25) .. controls (296,375.12) and (289.62,381.5) .. (281.75,381.5) .. controls (273.88,381.5) and (267.5,375.12) .. (267.5,367.25) -- cycle ;
			\draw    (176,278) -- (210,309.5) ;
			\draw    (229,327.5) -- (269,360.5) ;
			\draw    (163.75,284.5) -- (163.75,353) ;
			\draw    (281.75,285.5) -- (281.75,353) ;
			\draw    (176,359.25) -- (210,328.5) ;
			\draw    (233,308) -- (270,279.5) ;
			\draw  [dash pattern={on 4.5pt off 4.5pt}] (124,319.5) .. controls (124,278.63) and (149.97,245.5) .. (182,245.5) .. controls (214.03,245.5) and (240,278.63) .. (240,319.5) .. controls (240,360.37) and (214.03,393.5) .. (182,393.5) .. controls (149.97,393.5) and (124,360.37) .. (124,319.5) -- cycle ;
			\draw  [dash pattern={on 4.5pt off 4.5pt}] (201,320) .. controls (201,277.75) and (228.31,243.5) .. (262,243.5) .. controls (295.69,243.5) and (323,277.75) .. (323,320) .. controls (323,362.25) and (295.69,396.5) .. (262,396.5) .. controls (228.31,396.5) and (201,362.25) .. (201,320) -- cycle ;
			
			\draw (158,263) node [anchor=north west][inner sep=0.75pt]   [align=center] {$1$};
			\draw (158,360) node [anchor=north west][inner sep=0.75pt]   [align=left] {$2$};
			\draw (216,310) node [anchor=north west][inner sep=0.75pt]   [align=left] {$3$};
			\draw (276,360) node [anchor=north west][inner sep=0.75pt]    {$4$};
			\draw (276,263) node [anchor=north west][inner sep=0.75pt]   [align=left] {$5$};
			\draw (123,380) node [anchor=north west][inner sep=0.75pt]   [align=left] {$C_{1}$};
			\draw (302,382) node [anchor=north west][inner sep=0.75pt]   [align=left] {$C_{2}$};

		\end{tikzpicture}
	\end{minipage}%
	\begin{minipage}{.5\textwidth}
		\centering
		\tikzset{every picture/.style={line width=0.75pt}} 
		
		\begin{tikzpicture}[x=0.75pt,y=0.75pt,yscale=-1,xscale=1]
			
			\draw   (425.5,266.25) .. controls (425.5,258.38) and (431.88,252) .. (439.75,252) .. controls (447.62,252) and (454,258.38) .. (454,266.25) .. controls (454,274.12) and (447.62,280.5) .. (439.75,280.5) .. controls (431.88,280.5) and (425.5,274.12) .. (425.5,266.25) -- cycle ;
			\draw   (425.5,363.25) .. controls (425.5,355.38) and (431.88,349) .. (439.75,349) .. controls (447.62,349) and (454,355.38) .. (454,363.25) .. controls (454,371.12) and (447.62,377.5) .. (439.75,377.5) .. controls (431.88,377.5) and (425.5,371.12) .. (425.5,363.25) -- cycle ;
			\draw   (483.5,313.25) .. controls (483.5,305.38) and (489.88,299) .. (497.75,299) .. controls (505.62,299) and (512,305.38) .. (512,313.25) .. controls (512,321.12) and (505.62,327.5) .. (497.75,327.5) .. controls (489.88,327.5) and (483.5,321.12) .. (483.5,313.25) -- cycle ;
			\draw   (543.5,267.25) .. controls (543.5,259.38) and (549.88,253) .. (557.75,253) .. controls (565.62,253) and (572,259.38) .. (572,267.25) .. controls (572,275.12) and (565.62,281.5) .. (557.75,281.5) .. controls (549.88,281.5) and (543.5,275.12) .. (543.5,267.25) -- cycle ;
			\draw   (543.5,363.25) .. controls (543.5,355.38) and (549.88,349) .. (557.75,349) .. controls (565.62,349) and (572,355.38) .. (572,363.25) .. controls (572,371.12) and (565.62,377.5) .. (557.75,377.5) .. controls (549.88,377.5) and (543.5,371.12) .. (543.5,363.25) -- cycle ;
			\draw    (452,274) -- (486,305.5) ;
			\draw    (505,323.5) -- (545,356.5) ;
			\draw    (439.75,280.5) -- (439.75,349) ;
			\draw    (557.75,281.5) -- (557.75,349) ;
			\draw    (452,355.25) -- (486,324.5) ;
			\draw    (509,304) -- (546,275.5) ;
			\draw  [dash pattern={on 4.5pt off 4.5pt}] (400,315.5) .. controls (400,274.63) and (426.42,241.5) .. (459,241.5) .. controls (491.58,241.5) and (518,274.63) .. (518,315.5) .. controls (518,356.37) and (491.58,389.5) .. (459,389.5) .. controls (426.42,389.5) and (400,356.37) .. (400,315.5) -- cycle ;
			\draw  [dash pattern={on 4.5pt off 4.5pt}] (529,314) .. controls (529,272.85) and (541.98,239.5) .. (558,239.5) .. controls (574.02,239.5) and (587,272.85) .. (587,314) .. controls (587,355.15) and (574.02,388.5) .. (558,388.5) .. controls (541.98,388.5) and (529,355.15) .. (529,314) -- cycle ;
			
			\draw (434,260) node [anchor=north west][inner sep=0.75pt]   [align=left] {$1$};
			\draw (434,356) node [anchor=north west][inner sep=0.75pt]   [align=left] {$2$};
			\draw (492,306) node [anchor=north west][inner sep=0.75pt]   [align=left] {$3$};
			\draw (552,356) node [anchor=north west][inner sep=0.75pt]   [align=left] {$4$};
			\draw (552,260) node [anchor=north west][inner sep=0.75pt]   [align=left] {$5$};
			\draw (403,378) node [anchor=north west][inner sep=0.75pt]   [align=left] {$S_{1}$};
			\draw (580,374) node [anchor=north west][inner sep=0.75pt]   [align=left] {$S_{2}$};

		\end{tikzpicture}
	\end{minipage}
	\caption{Illustrations of $C_1,C_2,S_1,S_2$ on a given $5$-node graph with $d = 5$. We take $C_1 = \{1,2,3\}$, $C_2 = \{3,4,5\}$, $S_1 = \{1,2,3\}$ and $S_2 = \{4,5\}$. Note that these sets are all cliques of the graph and $S_1$,$S_2$ together form a partition of $\llbracket 5 \rrbracket$.}
	\label{fig:CiSi}
\end{figure}

To seek the closest $(C_i)_{i=1}^n$-factorizable transition matrix with respect to the graph $G$ and KL divergence, one such candidate is given by

\begin{definition}
	Let $\pi \in \mathcal{P}(\mathcal{X})$ be a positive probability mass and $P \in \mathcal{L}(\mathcal{X})$. We define, for all $x,y \in \mathcal{X}$,
	\begin{align*}
		\mathbf{P}_\pi(x,y) &\propto \prod_{i=1}^n P^{(C_i)}_{\pi}(x^{(C_i)}, y^{(C_i)}) \\
		&= \dfrac{1}{Z(x,(P^{(C_i)}_\pi)_{i=1}^n)} \prod_{i=1}^n P^{(C_i)}_\pi(x^{(C_i)}, y^{(C_i)}),
	\end{align*}
	where we recall that $P^{(C_i)}_\pi$ is the keep-$C_i$-in transition matrix with respect to $\pi$ and $Z(x,(P^{(C_i)}_\pi)_{i=1}^n)$ is introduced in Definition \ref{def:Cifactor}. Note that $\mathbf{P}_\pi$ depends on $(C_i)_{i=1}^n$, $P$ and $\pi$, and $\mathbf{P}_\pi \in \mathcal{L}_{\otimes_{i=1}^n C_i}^G(\mathcal{X})$.
\end{definition}

We prove a Pythagorean inequality, which allows us to conclude that $\mathbf{P}_\pi$ is indeed the closest $(C_i)_{i=1}^n$-factorizable transition matrix with normalization constants greater than or equal to that of $\mathbf{P}_\pi$, with respect to the graph $G$ and KL divergence.

\begin{theorem}(Pythagorean inequality)\label{thm:Pythfact2}
	Let $(C_i)_{i=1}^n$ be a set of cliques of a graph $G$ on $\llbracket d \rrbracket$ such that $\cup_{i=1}^n C_i = \llbracket d \rrbracket$. Let $\pi \in \mathcal{P}(\mathcal{X})$ be a positive probability mass, $P \in \mathcal{L}(\mathcal{X})$, $M \in \mathcal{L}_{\otimes_{i=1}^n C_i}^G(\mathcal{X})$, $L_i \in \mathcal{L}(\mathcal{X}^{(C_i)})$ for $i \in \llbracket n \rrbracket$ such that $M$ is $(C_i)_{i=1}^n$-factorizable with respect to the graph $G$ and $(L_i)_{i=1}^n$. For all $M$ such that $Z(x,(L_i)_{i=1}^n) \geq Z(x,(P^{(C_i)}_\pi)_{i=1}^n)$ for all $x$, we have
	\begin{align*}
		D_{KL}^{\pi}(P \| M) &\geq D_{KL}^{\pi}(P \| \mathbf{P}_\pi) + \sum_{i=1}^n D_{KL}^{\pi^{(C_i)}}(P^{(C_i)}_\pi \| L_i),
	\end{align*}
	where $P^{(C_i)}_\pi$ is the keep-$C_i$-in transition matrices of $P$ with respect to $\pi$ in Definition \ref{def:P-S}, while $D^{\pi^{(C_i)}}_{KL}(P^{(C_i)}_\pi;L_i)$ is weighted by $\pi^{(C_i)}$, the keep-$C_i$-in marginal distribution of $\pi$. In other words, $\mathbf{P}_\pi$ is the unique minimizer of 
	\begin{align*}
		\min_{M \in \mathcal{L}_{\otimes_{i=1}^n C_i}^G(\mathcal{X});~Z(x,(L_i)_{i=1}^n) \geq Z(x,(P^{(C_i)}_\pi)_{i=1}^n) \, \forall x} D_{KL}^{\pi}(P \| M) = D_{KL}^{\pi}(P \| \mathbf{P}_\pi).
	\end{align*}
\end{theorem}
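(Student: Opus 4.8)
The plan is to mimic the computation used in the proof of Theorem~\ref{thm:Pythfact} (the partition-based Pythagorean identity), with the key new feature being the normalization constants $Z(x,\cdot)$ that appear because a clique-factorizable chain is a \emph{normalized} product rather than a pure tensor product. First I would write out $D_{KL}^{\pi}(P\|M)$ using the definition together with $M(x,y) = \frac{1}{Z(x,(L_i)_{i=1}^n)}\prod_{i=1}^n L_i(x^{(C_i)},y^{(C_i)})$, and similarly record $\mathbf{P}_\pi(x,y) = \frac{1}{Z(x,(P^{(C_i)}_\pi)_{i=1}^n)}\prod_{i=1}^n P^{(C_i)}_\pi(x^{(C_i)},y^{(C_i)})$. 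The natural move is the ``add and subtract'' trick:
\begin{align*}
	D_{KL}^{\pi}(P\|M) = D_{KL}^{\pi}(P\|\mathbf{P}_\pi) + \sum_{x,y}\pi(x)P(x,y)\ln\!\left(\frac{\mathbf{P}_\pi(x,y)}{M(x,y)}\right).
\end{align*}
Substituting the two factorized forms, the cross term splits as
\begin{align*}
	\sum_{x,y}\pi(x)P(x,y)\sum_{i=1}^n \ln\!\left(\frac{P^{(C_i)}_\pi(x^{(C_i)},y^{(C_i)})}{L_i(x^{(C_i)},y^{(C_i)})}\right) + \sum_{x}\pi(x)\ln\!\left(\frac{Z(x,(L_i)_{i=1}^n)}{Z(x,(P^{(C_i)}_\pi)_{i=1}^n)}\right),
\end{align*}
using $\sum_y P(x,y)=1$ in the second sum.

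For the first sum, I would marginalize coordinate block by block exactly as in Theorem~\ref{thm:Pythfact}: summing $\pi(x)P(x,y)$ over all coordinates outside $C_i$ produces $\pi^{(C_i)}(x^{(C_i)})P^{(C_i)}_\pi(x^{(C_i)},y^{(C_i)})$ by Definition~\ref{def:P-S}, so each term collapses to $D_{KL}^{\pi^{(C_i)}}(P^{(C_i)}_\pi\|L_i)$, giving the claimed $\sum_{i=1}^n D_{KL}^{\pi^{(C_i)}}(P^{(C_i)}_\pi\|L_i)$. For the second sum, the hypothesis $Z(x,(L_i)_{i=1}^n)\ge Z(x,(P^{(C_i)}_\pi)_{i=1}^n)$ for all $x$ makes each logarithm nonnegative, so $\sum_x \pi(x)\ln(\cdots)\ge 0$; discarding this nonnegative term yields the stated inequality $D_{KL}^{\pi}(P\|M)\ge D_{KL}^{\pi}(P\|\mathbf{P}_\pi)+\sum_{i=1}^n D_{KL}^{\pi^{(C_i)}}(P^{(C_i)}_\pi\|L_i)$. (One should also handle the degenerate case where $M(x,y)=0$ while $P(x,y)>0$: then $P^{(C_i)}_\pi(x^{(C_i)},y^{(C_i)})>0$ for some $i$ forces $L_i(x^{(C_i)},y^{(C_i)})=0$, so the right-hand side is $+\infty$ and the inequality is trivial.)

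The uniqueness statement then follows because the right-hand side is minimized precisely when $\sum_{i=1}^n D_{KL}^{\pi^{(C_i)}}(P^{(C_i)}_\pi\|L_i)=0$ and the dropped $Z$-term is zero; by nonnegativity of $D_{KL}$ (Proposition~\ref{prop:Ifproperty}) and positivity of $\pi^{(C_i)}$ this holds iff $L_i=P^{(C_i)}_\pi$ for every $i$, in which case the $Z$-constants coincide and $M=\mathbf{P}_\pi$. I expect the main obstacle to be bookkeeping rather than anything deep: one must be careful that the block-by-block marginalization is legitimate even though the $C_i$ overlap (the sum $\sum_{x,y}\pi(x)P(x,y)(\cdots)$ is over the full space for each $i$ separately, so overlaps cause no double-counting issue there), and one must track the direction of the $Z$-inequality so that the discarded term has the correct sign — this is exactly why the theorem is only a Pythagorean \emph{inequality}, restricted to the sublevel set $\{Z(x,(L_i)_{i=1}^n)\ge Z(x,(P^{(C_i)}_\pi)_{i=1}^n)\ \forall x\}$, rather than an identity over all of $\mathcal{L}_{\otimes_{i=1}^n C_i}^G(\mathcal{X})$.
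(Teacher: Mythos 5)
Your proposal is correct and follows essentially the same route as the paper's proof: the add-and-subtract decomposition of $D_{KL}^{\pi}(P\|M)$ around $\mathbf{P}_\pi$, discarding the nonnegative term $\sum_x \pi(x)\ln\bigl(Z(x,(L_i)_{i=1}^n)/Z(x,(P^{(C_i)}_\pi)_{i=1}^n)\bigr)$ via the hypothesis on the normalization constants, and collapsing each block term to $D_{KL}^{\pi^{(C_i)}}(P^{(C_i)}_\pi\|L_i)$ by marginalizing $\pi(x)P(x,y)$ over the coordinates outside $C_i$ as in Theorem \ref{thm:Pythfact}. Your additional remarks on the degenerate case $M(x,y)=0$, the harmlessness of overlapping cliques, and the uniqueness of the minimizer are sound and in fact spell out details the paper's terse proof leaves implicit.
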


\begin{proof}
	\begin{align*}
		D_{KL}^{\pi}(P \| M) &= D_{KL}^{\pi}(P \| \mathbf{P}_\pi) + \sum_{x,y} \pi(x) P(x,y) \ln\left(\dfrac{Z(x,(L_i)_{i=1}^n)}{Z(x,(P^{(C_i)}_\pi)_{i=1}^n)}\dfrac{(\otimes_{i=1}^n P^{(S_i)}_\pi)(x,y)}{(\otimes_{i=1}^n L_i)(x,y)}\right) \\
		&\geq D_{KL}^{\pi}(P \| \mathbf{P}_\pi) + \sum_{i=1}^n \sum_{x^{(C_i)},y^{(C_i)}}  \pi^{(C_i)}(x^{(C_i)}) P^{(C_i)}_\pi(x^{(C_i)},y^{(C_i)}) \ln\left(\dfrac{P^{(C_i)}_\pi(x^{(C_i)},y^{(C_i)})}{L_i(x^{(C_i)},y^{(C_i)})}\right) \\
		&= D_{KL}^{\pi}(P \| \mathbf{P}_\pi) + \sum_{i=1}^n D_{KL}^{\pi^{(C_i)}}(P^{(C_i)}_\pi \|L_i).
	\end{align*}
\end{proof}

\subsection{Comparisons of mixing and hitting time parameters between $P$ and its information projections}\label{subsec:comparisonfunctionalconstants}

Let $S \subseteq \llbracket d \rrbracket$. In Section \ref{subsec:partitionfactor}, we have seen that $P^{(S)} \otimes P^{(-S)}$, the tensor product of the keep-$S$-in and leave-$S$-out transition matrix of a given $\pi$-stationary $P \in \mathcal{L}(\mathcal{X})$, arises naturally as an information projection of $P$ onto the space $\mathcal{L}_{S \otimes \llbracket d \rrbracket \backslash S}(\mathcal{X})$. The objective of this subsection is to investigate the relationship of hitting and mixing time parameters such as commute time, right spectral gap, log-Sobolev constant and Cheeger's constant between $P$ and its information projections. These parameters play important roles in bounding the hitting or mixing time of $P$, see for instance \cite{levin2017markov,SaloffCoste1997,Montenegro2006,AF02} and the references therein.

To this end, let us fix the notations. Throughout this subsection, we assume that $X= (X_n)_{n \in \mathbb{N}}$ is an ergodic $\pi$-reversible Markov chain with transition matrix $P \in \mathcal{L}(\pi)$. In view of Proposition \ref{prop:PSergodic}, we thus see that $X^{(S)} := (X_n^{(S)})_{n \in \mathbb{N}}$ is also an ergodic $\pi^{(S)}$-reversible Markov chain with transition matrix $P^{(S)}$. We write, for $f: \mathcal{X} \to \mathbb{R}$,
\begin{align*}
	\mathbb{E}_{\pi}(f) &:= \sum_{x \in \mathcal{X}} \pi(x) f(x), \quad
	\mathrm{Var}_{\pi}(f) := \sum_{x \in \mathcal{X}} \pi(x) (f(x) - \mathbb{E}_{\pi}(f))^2, \\
	\mathrm{Ent}_{\pi}(f) &:= \mathbb{E}_{\pi} \left(f \ln \dfrac{f}{\mathbb{E}_{\pi}(f)}\right), \quad
	\mathcal{D}_{\pi}(f,f) := \dfrac{1}{2} \sum_{x,y \in \mathcal{X}} \pi(x) P(x,y) (f(x) - f(y))^2,
\end{align*}
to be, respectively, the expectation, variance, entropy of $f$ with respect to $\pi$ and the Dirichlet form of $P$ with respect to $f, \pi$. We are interested in the following list of hitting and mixing time parameters associated with $X$ and $X^{(S)}$:
\begin{itemize}
	\item(Right spectral gap, relaxation time and log-Sobolev constant)
	The right spectral gap and the log-Sobolev constant of $P$ are defined respectively to be
	\begin{align*}
		\gamma(P) := \inf_{f;~ \mathrm{Var}_{\pi}(f) \neq 0} \dfrac{\mathcal{D}_{\pi}(f,f)}{\mathrm{Var}_{\pi}(f)}, \quad \alpha(P) := \inf_{f;~ \mathrm{Ent}_{\pi}(f^2) \neq 0} \dfrac{\mathcal{D}_{\pi}(f,f)}{\mathrm{Ent}_{\pi}(f^2)}.
	\end{align*}
	Note that since $P$ is assumed to be reversible and ergodic, the right spectral gap can be written as $\gamma(P) = 1 - \lambda_2(P)$, where $\lambda_2(P) < 1$ is the second largest eigenvalue of $P$. For $S \subseteq \llbracket d \rrbracket$, we shall analogously consider the right spectral gap $\gamma(P^{(S)})$ and the log-Sobolev constant $\alpha(P^{(S)})$ of $P^{(S)}$ by replacing $\pi$ in the definitions above with $\pi^{(S)}$ and $\mathcal{X}$ with $\mathcal{X}^{(S)}$. The relaxation time of the continuized chain of $P$ is defined to be $t_{rel}(P) := 1/\gamma(P)$. 
	
	\item(Cheeger's constant)
	Let $\emptyset \neq A \subseteq \mathcal{X} = \bigtimes_{i=1}^d \mathcal{X}^{(i)}$ and $A^c := \mathcal{X}\backslash A$. We define
	\begin{align*}
		\Phi_A(P) := \dfrac{(\pi \boxtimes P)(A, A^c)}{\pi(A)},
	\end{align*}
	and hence the Cheeger's constant of $P$ \cite[Chapter $7.2$]{levin2017markov} is defined to be
	\begin{align*}
		\Phi(P) := \min_{A \subset \mathcal{X};~ 0 < \pi(A) \leq 1/2} \Phi_A(P).
	\end{align*}
	Analogously, we define the Cheeger's constant of the keep-$S$-in transition matrix $P^{(S)}$. Let $\emptyset \neq A^{(S)} \subseteq \mathcal{X}^{(S)}$ and $A^{(S)c} = \mathcal{X}^{(S)} \backslash A^{(S)}$. We then have
	\begin{align*}
		\Phi_{A^{(S)}}(P^{(S)}) &= \dfrac{(\pi^{(S)} \boxtimes P^{(S)})(A^{(S)}, A^{(S)c})}{\pi^{(S)}(A^{(S)})}, \\
		\Phi(P^{(S)}) &= \min_{A^{(S)} \subset \mathcal{X}^{(S)};~ 0 < \pi^{(S)}(A^{(S)}) \leq 1/2} \Phi_{A^{(S)}}(P^{(S)}).
	\end{align*}
	
	\item(Commute time and average hitting time) Let $x,y \in \mathcal{X}$. The hitting time to the state $x$ (resp.~$x^{(S)}$) of the Markov chain $X$ (resp.~$X^{(S)}$) are defined to be
	\begin{align*}
		\tau_x(P) := \inf \{n \geq 0;~ X_n = x\},\quad \tau_{x^{(S)}}(P^{(S)}) = \inf \{n \geq 0;~ X_n^{(S)} = x^{(S)}\},
	\end{align*}
	where the usual convention of $\inf \emptyset := 0$ applies. The mean commute time between $x$ and $y$ of $X$ are given by
	\begin{align}\label{eq:varcommute}
		\mathbb{E}_x(\tau_y(P)) + \mathbb{E}_y(\tau_x(P)) &= \sup_{0 \leq f \leq 1;~f(x)=1,f(y)=0} \dfrac{1}{\mathcal{D}_{\pi}(f,f)}.
	\end{align}
	Analogously one can define the mean commute time between $x^{(S)}$ and $y^{(S)}$ of $X^{(S)}$ to be
	$$\mathbb{E}_{x^{(S)}}(\tau_{y^{(S)}}(P^{(S)})) + \mathbb{E}_{y^{(S)}}(\tau_{x^{(S)}}(P^{(S)})).$$
	The maximal mean commute time $t_c$ is defined to be
	\begin{align*}
		t_c(P) &:= \max_{x,y \in \mathcal{X}} \mathbb{E}_x(\tau_y(P)) + \mathbb{E}_y(\tau_x(P)), \\
		t_c(P^{(S)}) &= \max_{x^{(S)},y^{(S)} \in \mathcal{X}^{(S)}} \mathbb{E}_{x^{(S)}}(\tau_{y^{(S)}}(P^{(S)})) + \mathbb{E}_{y^{(S)}}(\tau_{x^{(S)}}(P^{(S)})).
	\end{align*}
	The average hitting time $t_{av}$ is defined to be
	\begin{align*}
		t_{av}(P) &:= \sum_{x,y \in \mathcal{X}} \pi(x)\pi(y) \mathbb{E}_x(\tau_y(P)), \\
		t_{av}(P^{(S)}) &= \sum_{x^{(S)},y^{(S)} \in \mathcal{X}^{(S)}} \pi^{(S)}(x^{(S)})\pi^{(S)}(y^{(S)}) \mathbb{E}_{x^{(S)}}(\tau_{y^{(S)}}(P^{(S)})),
	\end{align*}
	
\end{itemize}

We proceed to develop results to compare these parameters between $P$ and its information projections. For instance, in the case of the right spectral gap, we are interested in bounding $\gamma(P)$ with $\gamma(P^{(S)})$ or $\gamma(P^{(S)} \otimes P^{(-S)})$.

The main result of this subsection recalls a contraction principle in \cite[Proposition $4.44$]{AF02}: the hitting and mixing time parameters such as $\gamma, \alpha, \Phi, t_c, t_{av}$ are at least ``faster" for $P^{(S)}$ than the original chain $P$. We also establish new monotonicity results for the parameters $S \mapsto \mathbb{I}^{\pi^{(S)}}(P^{(S)})$ and $S \mapsto D_{KL}^{\pi^{(S)}}(P^{(S)} \| \Pi^{(S)})$ via the partition lemma presented in Theorem \ref{thm:partitionMC}.

\begin{corollary}[Contraction principle and monotonicity]\label{cor:contraction}
	Let $\emptyset \neq T \subseteq S \subseteq \llbracket d \rrbracket$ and $P \in \mathcal{L}(\pi)$ be an ergodic transition matrix. We have
	\begin{align*}
		\gamma(P) &\leq \gamma(P^{(S)}) \leq \gamma(P^{(T)}), \\
		\alpha(P) &\leq \alpha(P^{(S)}) \leq \alpha(P^{(T)}), \\
		\Phi(P) &\leq \Phi(P^{(S)}) \leq \Phi(P^{(T)}), \\
		t_c(P) &\geq t_c(P^{(S)}) \geq t_c(P^{(T)}), \\
		t_{av}(P) &\geq t_{av}(P^{(S)}) \geq t_{av}(P^{(T)}), \\
		\mathbb{I}^{\pi}(P) &\geq \mathbb{I}^{\pi^{(S)}}(P^{(S)}) \geq \mathbb{I}^{\pi^{(T)}}(P^{(T)}), \\
		D_{KL}^{\pi}(P \| \Pi) &\geq D_{KL}^{\pi^{(S)}}(P^{(S)} \| \Pi^{(S)}) \geq D_{KL}^{\pi^{(T)}}(P^{(T)} \| \Pi^{(T)}),
	\end{align*}
		%
	where $\Pi \in \mathcal{L}(\pi)$ is the transition matrix with each row being $\pi$. All the above equalities hold if $T = \llbracket d \rrbracket$. The last two inequalities also hold for general $\pi$-stationary $P$ \textcolor{black}{without reversibility}.
\end{corollary}

\begin{remark}
	We note that it is perhaps possible to obtain tighter bounds in Corollary \ref{cor:contraction} by considering the corresponding hitting or mixing time parameters of the ``restriction chains" as in \cite{JSTV04}, see Section \ref{subsubsec:projectionsampler}.
\end{remark}


\begin{proof}[Proof of Corollary \ref{cor:contraction}]
	Once we have obtained the inequalities governing $P$ and $P^{(S)}$, we replace $P$ by $P^{(S)}$ and note that $\left(P^{(S)}\right)^{(T)} = P^{(T)}$ to reach at the inequalities governing $P^{(S)}$ and $P^{(T)}$. \textcolor{black}{By recalling Remark \ref{rk:condite}, $P^{(S)}$ can be written as 
    \begin{equation*}
        P^{(S)}(x^{(S)}, y^{(S)})=\sum_{y^{(-S)}}\mathbb E_{x^{(-S)}\sim \pi(\cdot|x^{(S)})}\left[P\left((x^{(S)}, x^{(-S)}),(y^{(S)}, y^{(-S)})\right)\right],
    \end{equation*}
    hence
    \begin{align*}
        &\quad \left(P^{(S)}\right)^{(T)}(x^{(T)},y^{(T)})\\
        &=\sum_{y^{(S\setminus T)}}\mathbb E_{x^{(S\setminus T)}\sim \pi^{(S)}(\cdot|x^{(T)})}\left[P^{(S)}\left((x^{(T)}, x^{(S\setminus T)}),(y^{(T)}, y^{(S\setminus T)})\right)\right]\\
        &=\sum_{y^{(S\setminus T)}}\mathbb E_{x^{(S\setminus T)}\sim \pi^{(S)}(\cdot|x^{(T)})}\left[\sum_{y^{(-S)}}\mathbb E_{x^{(-S)}\sim \pi(\cdot|x^{(S)})}\left[P\left((x^{(T)}, x^{(S\setminus T)}, x^{(-S)}),(y^{(T)}, y^{(S\setminus T)}, y^{(-S)})\right)\right]\right]\\
        &=\sum_{y^{(-T)}}\mathbb E_{x^{(-T)}\sim \pi(\cdot|x^{(T)})}\left[P\left((x^{(T)}, x^{(-T)}),(y^{(T)}, y^{(-T)})\right)\right]\\
        &=P^{(T)}(x^{(T)}, y^{(T)}).
    \end{align*}
    }
    As a result, it suffices to prove only the inequalities between $P$ and $P^{(S)}$.
	
	The case of $\gamma, \alpha, \Phi, t_c, t_{av}$ have already been analyzed in \cite[Proposition $4.44$]{AF02}.

	Using the partition lemma in Theorem \ref{thm:partitionMC}, we see that
	\begin{align*}
		\mathbb{I}^{\pi}(P) = D^\pi_{KL}(P \| \otimes_{i=1}^d P^{(i)}_\pi) \geq D^{\pi^{(S)}}_{KL}(P^{(S)}_\pi \| \otimes_{i \in S} P^{(i)}_\pi) = \mathbb{I}^{\pi^{(S)}}(P^{(S)}).
	\end{align*}
	Replacing $P$ with $P^{(S)}$ and $S$ with $T$ in the above equations give the second desired inequality.
	
	Finally, using again the partition lemma twice as in Theorem \ref{thm:partitionMC} leads to
	\begin{align*}
		D_{KL}^{\pi}(P \| \Pi) \geq D_{KL}^{\pi^{(S)}}(P^{(S)} \| \Pi^{(S)}) \geq D_{KL}^{\pi^{(T)}}(P^{(T)} \| \Pi^{(T)}).
	\end{align*}
\end{proof}


\textcolor{black}{
For non-reversible Markov chains, these above quantities can be far from sharply bounding mixing times. A good alternative is multiplicative spectral gap \cite[Chapter 1]{Montenegro2006}, which is the gap between $1$ and second largest singular value. For a finite Markov chain with transition matrix $P$, the multiplicative spectral gap is defined as 
\begin{equation*}
    \gamma_M(P):=\gamma(\sqrt{PP^*})=1-\left\|P\right\|_{\ell_0^2(\pi)\to \ell_0^2(\pi)},
\end{equation*}
where 
\begin{equation*}
    \left\|P\right\|_{\ell_0^2(\pi)\to \ell_0^2(\pi)}:=\sup_{f\in \ell_0^2(\pi)}\frac{\left\|Pf\right\|_2}{\left\|f\right\|_2}, \quad \ell_0^2(\pi)=\left\{f\in \ell^2(\pi):\pi(f)=0\right\}.
\end{equation*}
Next, we provide the contraction principle based on the multiplicative spectral gap for non-reversible chains.
\begin{corollary}\label{cor:multiplicative spectral gap}
    Let $\emptyset \neq T \subseteq S \subseteq \llbracket d \rrbracket$ and $P$ be an ergodic transition matrix. Without assuming the reversibility of $P$, we have 
    \begin{equation*}
        \gamma_M(P) \leq \gamma_M(P^{(S)}) \leq \gamma_M(P^{(T)}).
    \end{equation*}
\end{corollary}
\begin{proof}
    It suffices to prove the first inequality. According to Remark \ref{rk:condite}, the projected chain $P^{(S)}$ can be used to characterize the following movement on $\mathcal{X}^{(S)}$:
    \begin{enumerate}[label=(\roman*)]
        \item Starting from $x^{(S)}$, draw $x^{(-S)}\sim \pi(\cdot|x^{(S)})$;
        \item Draw $(y^{(S)}, y^{(-S)})\sim P\left((x^{(S)}, x^{(-S)}),\cdot\right)$;
        \item Update $x^{(S)}\leftarrow y^{(S)}$. 
    \end{enumerate}
    Next, we define $K_S: \ell^2(\pi)\to \ell^2(\pi^{(S)})$ and $J_S: \ell^2(\pi^{(S)})\to \ell^2(\pi)$ as 
    \begin{gather*}
        K_Sf(x^{(S)}):=\mathbb E_{x^{(-S)}\sim \pi(\cdot|x^{(S)})}\left[f(x^{(S)}, x^{(-S)})\right], \quad f\in \ell^2(\pi),\\
        J_Sg(x^{(S)}, x^{(-S)}):= g(x^{(S)}), \quad g\in \ell^2(\pi^{(S)}),
    \end{gather*}
    then it is easy to see that $P^{(S)}=K_SPJ_S$. Moreover, $K_S$ and $J_S$ are adjoint operators, since for any $f\in \ell^2(\pi)$ and $g\in \ell^2(\pi^{(S)})$, 
    \begin{align*}
        \left\langle K_S f, g\right\rangle_{\pi^{(S)}}&=\sum_{x^{(S)}}g(x^{(S)})\pi^{(S)}(x^{(S)})\sum_{x^{(-S)}}f(x^{(S)}, x^{(-S)})\pi(x^{(-S)}|x^{(S)})\\
        &=\sum_{x}f(x)g(x^{(S)})\pi(x)\\
        &=\left\langle f, J_Sg\right\rangle_\pi.
    \end{align*}
    Observing that $J_S$ is an isometric embedding with $\left\|J_S\right\|_{\ell_0^2(\pi^{(S)})\to \ell_0^2(\pi)}=1$, we have 
    \begin{align*}
        \left\|P^{(S)}\right\|_{\ell_0^2(\pi^{(S)})\to \ell_0^2(\pi^{(S)})}&=\left\|K_SPJ_S\right\|_{\ell_0^2(\pi^{(S)})\to \ell_0^2(\pi^{(S)})}\\
        &\leq \left\|K_S\right\|_{\ell_0^2(\pi)\to \ell_0^2(\pi^{(S)})}\cdot \left\|P\right\|_{\ell_0^2(\pi)\to \ell_0^2(\pi)}\cdot \left\|J_S\right\|_{\ell_0^2(\pi^{(S)})\to \ell_0^2(\pi)}\\
        &=\left\|J_S\right\|_{\ell_0^2(\pi^{(S)})\to \ell_0^2(\pi)}\cdot \left\|P\right\|_{\ell_0^2(\pi)\to \ell_0^2(\pi)}\cdot \left\|J_S\right\|_{\ell_0^2(\pi^{(S)})\to \ell_0^2(\pi)}\\
        &=\left\|P\right\|_{\ell_0^2(\pi)\to \ell_0^2(\pi)},
    \end{align*}
    where we have used the fact that adjoint operators share the same norm. Then the result follows.
\end{proof}
}

The next result compares \textcolor{black}{$\gamma_M(P)$} and \textcolor{black}{$\gamma_M(P^{(S)} \otimes P^{(-S)})$} to obtain

\textcolor{black}{
\begin{corollary}\label{cor:mspectralgap}
	Let $\emptyset \neq S \subseteq \llbracket d \rrbracket$. Let $P$ be an ergodic and $\pi$-stationary transition matrix. We have 
    \begin{equation*}
        \gamma_M(P) \leq \gamma_M(P^{(S)} \otimes P^{(-S)}).
    \end{equation*}
    If we further assume that $P$ is lazy and reversible, then we have
	\begin{align*}
		\gamma(P) \leq \gamma(P^{(S)} \otimes P^{(-S)}).
	\end{align*}
	In particular, this yields $t_{rel}(P) \geq t_{rel}(P^{(S)} \otimes P^{(-S)})$. 
\end{corollary}
}
\begin{proof}
    \textcolor{black}{It is easy to verify that $\left(P^{(S)} \otimes P^{(-S)}\right)^*=\left(P^{(S)}\right)^*\otimes \left(P^{(-S)}\right)^*$ and 
    \begin{equation*}
        \left(P^{(S)} \otimes P^{(-S)}\right) \left(P^{(S)} \otimes P^{(-S)}\right)^*=P^{(S)}\left(P^{(S)}\right)^*\otimes P^{(-S)}\left(P^{(-S)}\right)^*.
    \end{equation*}
    Recalling for two reversible transition matrices $Q_1\in \mathcal{L}(\pi_1)$ and $Q_2\in \mathcal{L}(\pi_2)$ with non-negative eigenvalues, it is well known that 
    \begin{equation*}
        \lambda_2(Q_1\otimes Q_2)=\max \left\{\lambda_2(Q_1), \lambda_2(Q_2)\right\},
    \end{equation*}
    then the first result comes from Corollary \ref{cor:multiplicative spectral gap}. If $P$ is further assumed to be lazy and reversible, both $P^{(S)}, P^{(-S)}$ are also lazy by Proposition \ref{prop:PSergodic}, and hence $\lambda_2(P^{(S)} \otimes P^{(-S)}) = \max\{\lambda_2(P^{(S)}), \lambda_2(P^{(-S)})\}$. The desired result follows from Corollary \ref{cor:contraction}.
    }
\end{proof}

\subsection{Some submodular functions arising in the information theory of multivariate Markov chains}\label{subsec:submodular}

This subsection is devoted to prove that the mappings $S \mapsto H(P^{(S)})$ and $S \mapsto D(P \| P^{(S)} \otimes P^{(-S)})$ are submodular in $S$. These two properties are analogous to the counterpart properties of the Shannon entropy and the mutual information of random variables: they are respectively monotonically non-decreasing submodular and submodular functions (see for example \cite[Chapter $1.4, 1.5$]{polyanskiy2022information}). Note that in the i.i.d. case, the submodularity of Shannon entropy implies the Han's inequality via the notion of self-bounding function, see for example \cite[Corollary~2]{sason2022information}.

\begin{proposition}
	Let $S \subseteq \llbracket d \rrbracket$. Let $P \in \calL(\calX)$ with 
	stationary distribution $\pi$. We have
	\begin{enumerate}
		
		\item(Submodularity of the entropy rate of $P$)\label{it:submentropy}  The mapping $S \mapsto H(P^{(S)})$ is submodular.
		
		\item(Submodularity of the distance to $(S, \llbracket d \rrbracket \backslash S)$-factorizability of $P$)\label{it:submfact} The mapping $S \mapsto D(P \| P^{(S)} \otimes P^{(-S)})$ is submodular, where we recall that $D(P \| P^{(S)} \otimes P^{(-S)})$ is the distance to $(S, \llbracket d \rrbracket \backslash S)$-factorizability of $P$ in Section \ref{subsec:partitionfactor}.
		
		\item(Supermodularity and monotonicity of the distance to independence)\label{it:supermd} The mapping $S \mapsto \mathbb{I}(P^{(S)})$ is monotonically non-decreasing and supermodular.
		
	\end{enumerate}
\end{proposition}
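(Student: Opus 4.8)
The plan is to express all three quantities in terms of the entropy rates $H(P^{(S)})$ of the keep-$S$-in chains, reduce items~\eqref{it:submfact} and \eqref{it:supermd} to item~\eqref{it:submentropy} by two exact identities, and then prove item~\eqref{it:submentropy} from the chain rule of entropy.

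\textbf{Setup and identities.} Write $(X_0,X_1)\sim\pi\boxtimes P$ for a consecutive pair and $X^{(U)}_t=(X^j_t)_{j\in U}$ for $t\in\{0,1\}$ and $U\subseteq\llbracket d\rrbracket$. Definition~\ref{def:P-S} gives $\pi^{(S)}(x^{(S)})\,P^{(S)}(x^{(S)},y^{(S)})=\sum_{x^{(-S)},y^{(-S)}}\pi(x)P(x,y)$, so $P^{(S)}(x^{(S)},\cdot)$ is the conditional law of $X^{(S)}_1$ given $X^{(S)}_0=x^{(S)}$; hence $H(P^{(S)})=H(X^{(S)}_1\mid X^{(S)}_0)$, and in particular $H(P^{(\emptyset)})=0$. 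Expanding the KL divergences, collecting the $-\ln$ terms into entropy rates, and using that the $S$-marginal of $\pi\boxtimes P$ equals $\pi^{(S)}\boxtimes P^{(S)}$ (together with \eqref{eq:PSij} and Theorem~\ref{thm:closest} for the second line) yields
\begin{align*}
D(P\,\|\,P^{(S)}\otimes P^{(-S)})&=H(P^{(S)})+H(P^{(-S)})-H(P),\\
\mathbb{I}(P^{(S)})&=\textstyle\sum_{i\in S}H(P^{(i)})-H(P^{(S)}).
\end{align*}

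\textbf{Reductions.} Assume item~\eqref{it:submentropy}. Since $U\mapsto\llbracket d\rrbracket\setminus U$ swaps $\cup$ and $\cap$, the map $S\mapsto H(P^{(-S)})$ is submodular whenever $S\mapsto H(P^{(S)})$ is; adding the two and then the constant $-H(P)$ preserves submodularity, which is item~\eqref{it:submfact}. For item~\eqref{it:supermd}: monotonicity of $S\mapsto\mathbb{I}(P^{(S)})$ is a special case of Corollary~\ref{prop:contraction}, with $T=\emptyset$ handled by $0=\mathbb{I}(P^{(\emptyset)})\le\mathbb{I}(P^{(S)})$; and supermodularity follows from the second identity, because $S\mapsto\sum_{i\in S}H(P^{(i)})$ is modular while $S\mapsto -H(P^{(S)})$ is supermodular by item~\eqref{it:submentropy}, and modular-plus-supermodular is supermodular.

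\textbf{Proof of item~\eqref{it:submentropy}.} I would show the incremental gains $S\mapsto H(P^{(S\cup\{j\})})-H(P^{(S)})$ are non-increasing for each fixed $j\notin S$, which is equivalent to submodularity. By the chain rule of entropy,
\begin{align*}
H(P^{(S\cup\{j\})})-H(P^{(S)})=H\bigl(X^j_1\mid X^{(S)}_1,X^{(S)}_0,X^j_0\bigr)-I\bigl(X^{(S)}_1;X^j_0\mid X^{(S)}_0\bigr).
\end{align*}
The first term is non-increasing in $S$ because conditioning reduces entropy, so everything rests on showing $S\mapsto I(X^{(S)}_1;X^j_0\mid X^{(S)}_0)$ is non-decreasing; adjoining a coordinate set $R$ (disjoint from $S$, with $j\notin S\cup R$) and applying the chain rule for mutual information reduces this to $I(X^{(S)}_1;X^j_0\mid X^{(S)}_0,X^{(R)}_0)\ge I(X^{(S)}_1;X^j_0\mid X^{(S)}_0)$. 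This comparison is the crux and the main obstacle: it is immediate when $\pi=\otimes_i\pi^{(i)}$, since then $X^j_0$ is independent of $(X^{(S)}_0,X^{(R)}_0)$ and the gap equals $I(X^j_0;X^{(R)}_0\mid X^{(S)}_1,X^{(S)}_0)\ge 0$, whereas for general stationary $\pi$ it has to be extracted from stationarity more carefully (using, e.g., $H(X^{(U)}_0)=H(X^{(U)}_1)$ for all $U$, which comes from $\pi P=\pi$). Once it is in place, submodularity of $S\mapsto H(P^{(S)})$ — and hence items~\eqref{it:submfact} and \eqref{it:supermd} — follows. An alternative route is to prove the supermodularity in \eqref{it:supermd} directly, by viewing $\mathbb{I}(P^{(S)})=D^{\pi^{(S)}}_{KL}(P^{(S)}\|\otimes_{i\in S}P^{(i)})$ as a mutual-information-type functional of the edge measure and adapting the classical submodularity proof for mutual information, then reading off \eqref{it:submentropy} and \eqref{it:submfact} from the identities; the same conditional-mutual-information comparison resurfaces as the essential point.
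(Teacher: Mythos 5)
Your two identities $D(P\|P^{(S)}\otimes P^{(-S)})=H(P^{(S)})+H(P^{(-S)})-H(P)$ and $\mathbb{I}(P^{(S)})=\sum_{i\in S}H(P^{(i)})-H(P^{(S)})$, and the reductions of items \eqref{it:submfact} and \eqref{it:supermd} to item \eqref{it:submentropy} (complementation preserves submodularity, sums of submodular functions are submodular, modular plus supermodular is supermodular, monotonicity via Corollary \ref{prop:contraction}), are exactly what the paper does. The problem is item \eqref{it:submentropy} itself, which your proposal does not prove. You reduce submodularity of $S\mapsto H(P^{(S)})$ to the claim that $S\mapsto I(X^{(S)}_1;X^j_0\mid X^{(S)}_0)$ is non-decreasing, verify it only when $\pi$ is a product measure, and explicitly leave the general stationary case open; this is a genuine gap, and moreover the intermediate claim you single out is false for general stationary $\pi$. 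Take $d=3$ with binary coordinates, $W,U,V$ i.i.d.\ uniform bits, $X_0=(W,W,U)$ and $X_1=(V,V,W)$; this is a legitimate stationary pair because both time-marginals have the law ``two equal uniform bits together with an independent uniform bit''. With $j=1$, $S=\{3\}$, $T=\{2,3\}$ one gets $I(X^{(S)}_1;X^1_0\mid X^{(S)}_0)=I(W;W\mid U)=\ln 2$ while $I(X^{(T)}_1;X^1_0\mid X^{(T)}_0)=I((V,W);W\mid (W,U))=0$, so the conditional mutual information strictly decreases. In this example submodularity itself still holds, because the other term $H(X^1_1\mid X^{(S)}_1,X^{(S)}_0,X^1_0)$ drops by exactly the same amount; the cancellation between your two terms is therefore essential, and a term-by-term monotonicity argument of the kind you outline cannot be completed for general stationary $\pi$.

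For comparison, the paper proves item \eqref{it:submentropy} by rearranging the submodularity inequality into $D_{KL}^{\pi^{(T\cup\{i\})}}(P^{(T\cup\{i\})}\|P^{(T)}\otimes P^{(i)})\ge D_{KL}^{\pi^{(S\cup\{i\})}}(P^{(S\cup\{i\})}\|P^{(S)}\otimes P^{(i)})$ --- precisely the inequality your identities produce --- and invoking the partition lemma, Theorem \ref{thm:partitionMC}. That invocation is clean exactly when the keep-$(S\cup\{i\})$-in projection of the reference chain $P^{(T)}\otimes P^{(i)}$ is again $P^{(S)}\otimes P^{(i)}$, which is the case when $\pi$ is of product form; for genuinely non-product stationary $\pi$ the obstacle you flag is real (a small positive-$\pi$ perturbation of the companion example $X_0=(W,W,U)$, $X_1=(U,U,W)$ even violates the rearranged KL inequality, so product-type structure on $\pi$ is what makes both your product-case argument and the paper's one-line invocation go through). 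As submitted, then, your proposal establishes item \eqref{it:submentropy} only in the product-$\pi$ regime, and since items \eqref{it:submfact} and \eqref{it:supermd} are derived from it, the proof of the proposition is incomplete.
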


\begin{proof}
	We first prove item \eqref{it:submentropy}. Let $S \subseteq T \subseteq \llbracket d \rrbracket$ and suppose $i \in \llbracket d \rrbracket \backslash T$. By the definition of submodularity, it suffices to show that
	\begin{align*}
		H(P^{(S\cup \{i\})}) - H(P^{(S)}) \geq H(P^{(T\cup \{i\})}) - H(P^{(T)}).
	\end{align*}
	Rearranging these terms implies that it is equivalent to show that
	\begin{align*}
		D^{\pi^{(T\cup \{i\})}}_{KL}(P^{(T\cup \{i\})} \| P^{(T)} \otimes P^{(i)}) \geq D^{\pi^{(S \cup \{i\})}}_{KL}(P^{(S \cup \{i\})} \| P^{(S)} \otimes P^{(i)}).
	\end{align*}
	This holds owing to the partition lemma in Theorem \ref{thm:partitionMC}. 
	
	
	Next, we prove item \eqref{it:submfact}. By property of submodular function, the mapping $S \mapsto H(P^{(-S)})$ is also submodular, and hence
	$$D(P \| P^{(S)} \otimes P^{(-S)}) = H(P^{(S)}) + H(P^{(-S)})- H(P)$$
	is submodular since the sum of two submodular functions is submodular.
	
	Finally, we prove item \eqref{it:supermd}. The monotonicity is shown in Corollary \ref{cor:contraction}. We also note that
	\begin{align*}
		\mathbb{I}(P^{(S)}) = \sum_{i \in S} H(P^{(i)}) - H(P^{(S)}),
	\end{align*}
	which is a sum of a modular function and a supermodular function $- H(P^{(S)})$. As the sum of two supermodular functions is supermodular, $\mathbb{I}(P^{(S)})$ is supermodular.

\end{proof}

\section{Applications of projection chains to the design of MCMC samplers}\label{sec:projectionsamplers}

The main aim of this section is to concretely illustrate and apply the notion of projection chains (i.e. keep-$S$-in or leave-$S$-out chains) to design MCMC samplers. As this part is of independent interest, we have decided to single it out as an individual section. In particular, in Section \ref{subsubsec:projectionsampler} and Section \ref{subsec:ltemp} we shall design an improved projection sampler over the original swapping algorithm and analyze its mixing time.

We stress that the technique of projection chains is not limited to the swapping algorithm. In view of Corollary \ref{cor:contraction}, the technique is broadly applicable to speedup mixing of multivariate Markov chains (such as particle-based MCMC) with stationary distribution $\pi$ and we are only interested in sampling from $\pi^{(S)}$, under the assumption that certain conditional distributions can be sampled from (recall Remark \ref{rk:condite}). In this section, we focus on a particular reversible multivariate Markov chain (swapping algorithm), as finer or more precise mixing time results can usually be obtained for reversible chains.

\subsection{An improved projection sampler based on the swapping algorithm and its mixing time analysis}\label{subsubsec:projectionsampler}

We first consider the special case of $d = 2$-temperature swapping algorithm. The main results of this section (\textcolor{black}{Corollaries} \ref{cor:projectionsamplerN} and \ref{cor:projectionsamplermain} below) state that various mixing time parameters of the keep-$\{2\}$-in (or leave-$\{1\}$-out) chain based on a two-temperature swapping algorithm are improved by at least a factor of the dimension of the underlying state space than the original swapping algorithm. We also offer an intuitive explanation on why this projection sampler accelerates over the original algorithm in Remark \ref{rk:intuitive2}.

To this end, let us fix the notations and briefly recall the dynamics of the swapping algorithm. Much of our exposition in this example follows that in \cite{BR16}. Let $P_0$ be an ergodic and reversible chain with stationary distribution being the discrete uniform $\pi_0$ on $\mathcal{X}$. Denote the Boltzmann-Gibbs distribution associated with energy function $\mathcal{H}: \mathcal{X} \to \mathbb{R}$ at inverse temperature $\beta \geq 0$ to be
\begin{align*}
	\pi_{\beta}(x) \propto e^{-\beta \mathcal{H}(x)}.
\end{align*}
Let $0=: \beta_1 < \ldots < \beta_d := \beta$ be a sequence of inverse temperatures with $d \geq 2$, and we denote by $\mathcal{X}_{sw} := \mathcal{X}^{d}$, the $d$ products of the original state space $\mathcal{X}$, to be the state space of the swapping chain. Let $P_{sw}$ be the transition matrix of the swapping chain, whose stationary distribution $\pi_{sw}$ is of product form with
$$\pi_{sw} := {\otimes}_{i=1}^d \pi_{\beta_i}.$$

At each step, the swapping chain chooses uniformly at random between the following two moves:
\begin{itemize}
	\item(Level move): In a level move, the swapping chain chooses an inverse temperature $\beta_i$ uniformly at random. The swapping chain moves the $i$th coordinate according to a Metropolis-Hastings chain at inverse temperature $\beta_i$, while the remaining coordinates are kept fixed.
	
	\item(Swap move): In a swap move, the swapping chain chooses an index $i \in \llbracket d-1 \rrbracket$ and swaps the coordinate $x_{i}$ and $x_{i+1}$ with a suitable acceptance probability. Precisely, we have for all $i \in \llbracket d-1 \rrbracket$, $x = (x_1,\ldots,x_d), y = (x_1,\ldots,x_{i+1},x_{i},\ldots,x_d)$,
	\begin{align*}
		P_{sw}(x,y) &= \dfrac{1}{2(d-1)} \min\bigg\{1,\dfrac{\pi_{sw}(y)}{\pi_{sw}(x)}\bigg\} \\
		&= \dfrac{1}{2(d-1)} \min\bigg\{1,\dfrac{\pi_{\beta_i}(x_{i+1})\pi_{\beta_{i+1}}(x_{i})}{\pi_{\beta_i}(x_{i})\pi_{\beta_{i+1}}(x_{i+1})}\bigg\} \\
		&= \dfrac{1}{2(d-1)} e^{-(\beta_{i+1}-\beta_i)(\mathcal{H}(x_{i})-\mathcal{H}(x_{i+1}))_+}.
	\end{align*}
\end{itemize}

In the special case of $d = 2$, the swapping algorithm amounts to running two Markov chains (aka particles) simultaneously with one at inverse temperature $0$ and the other one at the target $\beta$ coupled with swapping moves between the states of these two chains. With these choices, $\pi_{sw} = \pi_0 \otimes \pi_{\beta}$, the product distribution of $\pi_{0}$ and $\pi_{\beta}$. The keep-$\{2\}$-in (or $2$nd marginal as in Definition \ref{def:Pi}) Markov chain with transition matrix $P_{sw}^{(2)}$ can be written as, for $x^2, y^2 \in \mathcal{X}$,
\begin{align*}
	P_{sw}^{(2)}(x^2,y^2) = \sum_{x^1,y^1 \in \mathcal{X}} \pi_0(x^1) P_{sw}((x^1,x^2),(y^1,y^2)).
\end{align*}
Thus, to simulate one step of the keep-$\{2\}$-in chain $P_{sw}^{(2)}$ with a starting state $x^2$, we first draw a random state $x^1$ according to the discrete uniform $\pi_0$, then the Markov chain is evolved according to the swapping chain $P_{sw}$ from $(x^1,x^2)$ to $(y^1,y^2)$. In view of Remark \ref{rk:condite}, note that we implicitly assume we are able to sample from $\pi_0$, which is possible for instance in Ising models where the state space can be of the form $\mathcal{X} = \{0,1\}^N$. This assumption however can be unrealistic when the state space is more complicated such that one may not be able to sample from $\mathcal{X}$ uniformly.

Observe that the state space of this two-temperature swapping chain can be partitioned as disjoint unions of $\Omega_x$ given by 
\begin{align*}
	\mathcal{X}^2 &= \cup_{x \in \mathcal{X}} \Omega_x, \quad \Omega_x := \mathcal{X} \times \{x\}.
\end{align*}
In the terminologies of \cite{JSTV04}, the projection chain (i.e. the notation $\overline{P}$ therein) of $P_{sw}$ with respect to the above partition is $P_{sw}^{(2)}$, while the restriction chains (i.e. the notation $P_i$ therein) on the state space $\Omega_x$ can be shown to be, for each $x \in \mathcal{X}$,
$$(P_{sw})_x := \dfrac{1}{4} P_0 + \dfrac{3}{4} I.$$
Note that the right hand side of the above expression does not depend on $x$. Thus, the right spectral gap and log-Sobolev constant of $(P_{sw})_x$ are 
\begin{align}\label{eq:restrictiongamma}
	\gamma((P_{sw})_x) = \dfrac{1}{4} \gamma(P_0), \quad \alpha((P_{sw})_x) = \dfrac{1}{4} \alpha(P_0).
\end{align}

Since the projection chain $P_{sw}^{(2)}$ and the restriction chains $(P_{sw})_x$ are ergodic, Theorem $1$ and Theorem $4$ in \cite{JSTV04} can be applied in this setting to yield

\begin{corollary}[Relaxation time and log-Sobolev time of $P_{sw}^{(2)}$ are at least three times faster than that of $P_{sw}$]
	\begin{align*}
		\gamma(P_{sw}) &\leq \dfrac{1}{3} \gamma(P_{sw}^{(2)}), \quad
		\alpha(P_{sw}) \leq \dfrac{1}{3} \alpha(P_{sw}^{(2)}).
	\end{align*}
\end{corollary}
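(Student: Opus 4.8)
The plan is to specialize the decomposition bounds of \cite{JSTV04} to the block decomposition $\mathcal{X}^2 = \bigcup_{x \in \mathcal{X}} \Omega_x$, $\Omega_x = \mathcal{X} \times \{x\}$, that has already been set up above. First I would record the two ingredients that the decomposition theorems require. On the one hand, the projection chain of $P_{sw}$ associated with this partition is $\overline{P_{sw}} = P_{sw}^{(2)}$, which is ergodic because $P_{sw}$ is ergodic and $\pi_0$ is a positive probability mass (Proposition \ref{prop:PSergodic}). On the other hand, every restriction chain equals $(P_{sw})_x = \frac{1}{4} P_0 + \frac{3}{4} I$; it is ergodic (since $P_0$ is), and being a lazy multiple of $P_0$ it has right spectral gap $\gamma((P_{sw})_x) = \frac{1}{4}\gamma(P_0)$ and log-Sobolev constant $\alpha((P_{sw})_x) = \frac{1}{4}\alpha(P_0)$. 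The fact that these restriction-chain constants do not depend on $x$ is what makes the minimum over blocks trivial to evaluate.

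With these ingredients in hand I would invoke \cite[Theorem~1]{JSTV04} for the spectral gap and \cite[Theorem~4]{JSTV04} for the log-Sobolev constant. Each of these bounds the relevant functional constant of the full chain $P_{sw}$ in terms of the same constant for the projection chain $P_{sw}^{(2)}$ together with the (common) restriction chain $\frac{1}{4} P_0 + \frac{3}{4} I$. Substituting $\gamma((P_{sw})_x) = \frac{1}{4}\gamma(P_0)$ and $\alpha((P_{sw})_x) = \frac{1}{4}\alpha(P_0)$ into those inequalities and simplifying should collapse the general two-term bounds into the clean multiplicative statements $\gamma(P_{sw}) \le \frac{1}{3}\gamma(P_{sw}^{(2)})$ and $\alpha(P_{sw}) \le \frac{1}{3}\alpha(P_{sw}^{(2)})$, the factor $\frac{1}{3}$ being the universal constant produced by the JSTV04 decomposition for a single-direction partition. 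The reformulation in terms of relaxation time and log-Sobolev time asserted in the statement is then immediate from $t_{rel}(P) = 1/\gamma(P)$.

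I expect the bookkeeping in this last step to be the only delicate part: one must check carefully that the hypotheses of \cite[Theorems~1 and~4]{JSTV04} (reversibility, and ergodicity of both the projection chain and every restriction chain) are genuinely met in this example, and then verify that the explicit structure $(P_{sw})_x = \frac{1}{4} P_0 + \frac{3}{4} I$ feeds through the decomposition inequalities so as to produce exactly the factor $3$, with no residual dependence on $\gamma(P_0)$ or $\alpha(P_0)$. The log-Sobolev half of the statement is otherwise a verbatim repetition of the spectral-gap half with Theorem~4 in place of Theorem~1.
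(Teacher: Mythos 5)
You reproduce the paper's route faithfully up to the decisive point: the partition $\mathcal{X}^2=\bigcup_{x\in\mathcal{X}}\Omega_x$, the identification of the projection chain with $P_{sw}^{(2)}$ and of every restriction chain with $\frac{1}{4}P_0+\frac{3}{4}I$ (hence $\gamma((P_{sw})_x)=\frac{1}{4}\gamma(P_0)$, $\alpha((P_{sw})_x)=\frac{1}{4}\alpha(P_0)$), the ergodicity checks via Proposition \ref{prop:PSergodic}, and the citation of Theorems 1 and 4 of \cite{JSTV04} --- this is exactly what the paper does. The gap is the step you yourself defer as ``bookkeeping'': how those two theorems produce the factor $1/3$. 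Your sketch of that step mischaracterizes them. The decomposition bounds of \cite{JSTV04} take three inputs --- the projection-chain constant, the worst restriction-chain constant, and the escape parameter $\Gamma$ of \eqref{eq:Gamma}, which your proposal never introduces although it is a required input to both theorems --- and they have the shape $\gamma(P_{sw})\ge\min\{\gamma(P_{sw}^{(2)})/3,\ \gamma(P_{sw}^{(2)})\gamma_{\min}/(3\Gamma+\gamma(P_{sw}^{(2)}))\}$ together with its log-Sobolev analogue. Substituting $\gamma((P_{sw})_x)=\frac{1}{4}\gamma(P_0)$ therefore does not make the bounds ``collapse'' with ``no residual dependence on $\gamma(P_0)$ or $\alpha(P_0)$'': that substitution, combined with $\Gamma=3/4$, is precisely what yields the $N$-dependent constants of Corollary \ref{cor:projectionsamplerN}, while the factor $1/3$ in the present corollary is the free-standing $\gamma(P_{sw}^{(2)})/3$ term and owes nothing to the restriction data.

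More seriously, the inequalities you must produce run in the opposite direction to the bounds you propose to substitute into: a lower bound on $\gamma(P_{sw})$ of the above shape cannot, by mere simplification, deliver the upper bounds $\gamma(P_{sw})\le\frac{1}{3}\gamma(P_{sw}^{(2)})$ and $\alpha(P_{sw})\le\frac{1}{3}\alpha(P_{sw}^{(2)})$; the only comparison in that direction supplied by the general theory is the contraction principle $\gamma(P_{sw})\le\gamma(P_{sw}^{(2)})$, $\alpha(P_{sw})\le\alpha(P_{sw}^{(2)})$ of Corollary \ref{prop:contraction}, with no factor $3$. So the entire content of the corollary sits exactly in the step your proposal leaves blank, and it is not routine: a complete argument would have to quote the precise statements of \cite[Theorems 1 and 4]{JSTV04} and exhibit explicitly how the asserted factor-$3$ comparison between $P_{sw}$ and its projection follows in this example. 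The paper itself offers only the bare citation here, so this missing derivation is where any genuine proof must do its work; as written, your proposal asserts the conclusion rather than proving it.
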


In view of the above result, it is thus advantageous to use the projection sampler $P_{sw}^{(2)}$ rather than the original $P_{sw}$ to sample from $\pi_{\beta}$, as the former is at least three times faster than the latter in terms of relaxation or log-Sobolev time. Note that this result is also an improvement towards the general contraction principle presented in Corollary \ref{cor:contraction}.

Let us now specialize into $\mathcal{X} = \{0,1\}^N$ with $N \in \mathbb{N}$, and consider $P_0$ being the transition matrix of the simple random walk on the hypercube $\mathcal{X}$. In other words, at each time a coordinate is chosen uniformly at random, and the entry of the chosen coordinate is flipped to the other with probability $1$ while keeping all other coordinates unchanged. It is well-known (see e.g. \cite[Section $4.5$]{JSTV04}) that $\alpha(P_0) = \gamma(P_0)/2 = \frac{1}{N+1}$, and hence, in view of \eqref{eq:restrictiongamma}, we arrive at
\begin{align}\label{eq:restrictiongamma2}
	\gamma((P_{sw})_x) = \dfrac{1}{2} \dfrac{1}{N+1}, \quad \alpha((P_{sw})_x) = \dfrac{1}{4} \dfrac{1}{N+1}.
\end{align}

Denote the parameter $\Gamma$ (i.e. the notation $\gamma$ in \cite{JSTV04}) to be
\begin{align}\label{eq:Gamma}
	\Gamma := \max_{x \in \mathcal{X}} \max_{y \in \mathcal{X}} \left(1 - \sum_{z \in \mathcal{X}} P_{sw}((y,x),(z,x))\right).
\end{align}
This parameter $\Gamma$ measures the probability of escaping from one block of partition $\Omega_x$ maximized over all states $x$. Let $x^* = \argmax \mathcal{H}(x)$ and $y^*$ be chosen such that $y^* \neq x^*$, then it can readily be seen that $\Gamma$ is attained with these choices and
\begin{align*}
	\Gamma = 1 - \sum_{z \in \mathcal{X}} P_{sw}((y^*,x^*),(z,x^*)) = 1 - \dfrac{1}{4} = \dfrac{3}{4}.
\end{align*}

Using again Theorem $1$ and Theorem $4$ in \cite{JSTV04} leads to

\begin{corollary}[Relaxation time and log-Sobolev time of $P_{sw}^{(2)}$ are at least $N$ times faster than that of $P_{sw}$]\label{cor:projectionsamplerN}
	On the state space $\mathcal{X} = \{0,1\}^N$ with $P_0$ being the simple random walk on $\mathcal{X}$, we have
	\begin{align*}
		\dfrac{1}{\gamma(P_{sw})} &= 2(N+1) + \dfrac{9}{2}(N+1) \dfrac{1}{\gamma(P_{sw}^{(2)})}, \\
		\dfrac{1}{\alpha(P_{sw})} &= 4(N+1) + 9(N+1) \dfrac{1}{\alpha(P_{sw}^{(2)})}.
	\end{align*}
\end{corollary}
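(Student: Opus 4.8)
The plan is to obtain both displays as direct instances of the decomposition bounds in \cite{JSTV04}, since every ingredient those theorems require has already been assembled above. Recall that the state space $\mathcal{X}^2$ of the two-temperature swapping chain is partitioned into the blocks $\Omega_x = \mathcal{X}\times\{x\}$, $x\in\mathcal{X}$; with respect to this partition the projection chain (the chain $\overline{P}$ of \cite{JSTV04}) is exactly $P_{sw}^{(2)}$, while the restriction chains are $(P_{sw})_x = \frac14 P_0 + \frac34 I$, which are lazy, ergodic and $\pi_0$-reversible and whose functional constants are computed in \eqref{eq:restrictiongamma}--\eqref{eq:restrictiongamma2} as $\gamma((P_{sw})_x) = \frac12\frac{1}{N+1}$ and $\alpha((P_{sw})_x) = \frac14\frac{1}{N+1}$, uniformly in $x$. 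The escape parameter $\Gamma$ of \eqref{eq:Gamma} was evaluated to be $\Gamma = 3/4$. Since $P_{sw}$ is reversible with respect to $\pi_{sw}=\pi_0\otimes\pi_\beta$, the projection chain $P_{sw}^{(2)}$ is ergodic and $\pi_\beta$-reversible by Proposition \ref{prop:PSergodic}, and each restriction chain is ergodic, so the hypotheses of Theorems 1 and 4 in \cite{JSTV04} are in force.

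First I would write out the spectral-gap decomposition bound (Theorem 1 of \cite{JSTV04}) for $P_{sw}$ relative to this partition; after taking reciprocals it takes the form $1/\gamma(P_{sw}) \le 1/\gamma_{\min} + 3\Gamma/(\gamma_{\min}\,\gamma(P_{sw}^{(2)}))$, where $\gamma_{\min}=\min_x \gamma((P_{sw})_x)$. Substituting $\gamma_{\min}=\frac12\frac{1}{N+1}$ and $\Gamma=3/4$ turns the first term into $2(N+1)$ and the coefficient of $1/\gamma(P_{sw}^{(2)})$ into $3\Gamma/\gamma_{\min}=\frac92(N+1)$, which is the first displayed relation. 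The second display comes out the same way from the log-Sobolev decomposition bound (Theorem 4 of \cite{JSTV04}): its reciprocal form reads $1/\alpha(P_{sw}) \le 1/\alpha_{\min} + 3\Gamma/(\alpha_{\min}\,\alpha(P_{sw}^{(2)}))$, and inserting $\alpha_{\min}=\frac14\frac{1}{N+1}$, $\Gamma=3/4$ gives the first term $4(N+1)$ and the coefficient $3\Gamma/\alpha_{\min}=9(N+1)$.

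The only genuine work is bookkeeping: one must track the normalization conventions of \cite{JSTV04} (discrete- versus continuous-time generators, the exact placement of the $\Gamma$-factor, and the fact that the ``$\min$ over restriction chains'' is here attained simultaneously by every block, as all $(P_{sw})_x$ coincide) so that the constants $2,\frac92$ and $4,9$ emerge precisely as stated. I would also flag one point for care: the decomposition theorems of \cite{JSTV04} yield one-sided estimates, so what they supply directly are the inequalities ``$\le$'' in place of the equalities written above; if genuine equality is intended, a matching bound in the reverse direction (for instance an upper bound on $\gamma(P_{sw})$ from a test function constant on each block $\Omega_x$, together with the product structure of the restriction dynamics) must be added. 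The main obstacle, such as it is, is thus purely this reconciliation of constants and of the inequality direction, rather than any fresh estimate.
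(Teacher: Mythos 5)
Your proposal is correct and follows essentially the same route as the paper, which likewise obtains both displays by plugging the restriction-chain constants from \eqref{eq:restrictiongamma2}, the escape parameter $\Gamma = 3/4$ from \eqref{eq:Gamma}, and the projection chain $P_{sw}^{(2)}$ into Theorems 1 and 4 of \cite{JSTV04}, with only the constant bookkeeping you describe. Your closing caveat is well taken: those decomposition theorems deliver exactly the one-sided reciprocal bounds you wrote (with ``$\leq$''), and the paper's proof supplies no matching reverse estimate, so the equalities as displayed should indeed be read as the inequalities coming from \cite{JSTV04}.
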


From the viewpoint of MCMC, using the projection sampler $P_{sw}^{(2)}$ can save a factor of $N$, the dimension of $\mathcal{X}$, when compared with the original swapping chain $P_{sw}$.

We proceed to compare the (worst-case $L^2$) mixing time of the continuized chain of $P_{sw}$ and $P_{sw}^{(2)}$. For $t \geq 0$, define the heat kernel of a $P \in \mathcal{L}(\mathcal{X})$ to be
$$\mathbf{H}_t(P) := e^{t(P - I)}.$$
If $P$ is $\pi$-stationary, the mixing time of the continuized chain of $P$ is defined to be
\begin{align*}
	T_{mix}(P,\varepsilon) := \inf\bigg\{t \geq 0;~ \max_{x \in \mathcal{X}} \sqrt{\sum_{y \in \mathcal{X}} \pi(y)\left(\dfrac{\mathbf{H}_t(P)(x,y)}{\pi(y)} -1\right)^2} < \varepsilon\bigg\}.
\end{align*}
A celebrated result \cite[Page $697$]{DSC96} gives that
\begin{align}\label{eq:logsobolevmix}
	\dfrac{1}{2 \alpha(P)}\leq T_{mix}(P, 1/e) \leq \dfrac{4 + \log \log (1/\min_x \pi(x))}{\alpha(P)}.
\end{align}
Using \eqref{eq:logsobolevmix} together with Corollary \ref{cor:projectionsamplerN} gives

\begin{corollary}\label{cor:projectionsamplermain}
	In the setting of Corollary \ref{cor:projectionsamplerN}, let $\mathrm{Osc}(\mathcal{H}) = \max_x \mathcal{H}(x) - \min_x \mathcal{H}(x)$ be the oscillation of the function $\mathcal{H}$. We have
	\begin{align*}
		T_{mix}(P_{sw}, 1/e) &\geq \dfrac{9(N+1)}{2\alpha(P_{sw}^{(2)})} = \Omega\left(\dfrac{N}{\alpha(P_{sw}^{(2)})}\right), \\
		T_{mix}(P_{sw}^{(2)}, 1/e) &\leq \dfrac{4+\log(\beta \mathrm{Osc}(\mathcal{H}) + N \log 2)}{\alpha(P_{sw}^{(2)})}. 
	\end{align*}
	In particular, if $\mathrm{Osc}(H) = \mathcal{O}(N^k)$ for some $k > 0$, then for large enough $N$ we have
	$$T_{mix}(P_{sw}^{(2)}, 1/e) = \mathcal{O}\left(\dfrac{\log(\beta N)}{\alpha(P_{sw}^{(2)})}\right).$$
\end{corollary}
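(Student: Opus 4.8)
The plan is to assemble the statement from three ingredients that are already available: the Diaconis--Saloff-Coste log-Sobolev mixing time estimates \eqref{eq:logsobolevmix}, the exact relations between the log-Sobolev constants of $P_{sw}$ and $P_{sw}^{(2)}$ recorded in Corollary \ref{cor:projectionsamplerN}, and an elementary lower bound on $\min_{x\in\mathcal{X}}\pi_{\beta}(x)$ in terms of the oscillation of $\mathcal{H}$.

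\emph{Lower bound on $T_{mix}(P_{sw},1/e)$.} First I would apply the left inequality in \eqref{eq:logsobolevmix} to $P_{sw}$, which gives $T_{mix}(P_{sw},1/e)\geq 1/(2\alpha(P_{sw}))$. Then, using the identity $1/\alpha(P_{sw}) = 4(N+1) + 9(N+1)/\alpha(P_{sw}^{(2)})$ from Corollary \ref{cor:projectionsamplerN} and discarding the nonnegative term $4(N+1)$, one gets $1/\alpha(P_{sw})\geq 9(N+1)/\alpha(P_{sw}^{(2)})$, and hence $T_{mix}(P_{sw},1/e)\geq 9(N+1)/(2\alpha(P_{sw}^{(2)})) = \Omega(N/\alpha(P_{sw}^{(2)}))$.

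\emph{Upper bound on $T_{mix}(P_{sw}^{(2)},1/e)$.} The key preliminary step is to identify the stationary distribution of $P_{sw}^{(2)}$: since $P_{sw}$ is $\pi_{sw}=\pi_0\otimes\pi_{\beta}$-stationary, the keep-$\{2\}$-in chain $P_{sw}^{(2)}$ is $\pi_{sw}^{(2)}=\pi_{\beta}$-stationary (cf.\ Proposition \ref{prop:PSergodic} and the remarks following Definition \ref{def:Pi}). Next I would bound $\min_x\pi_{\beta}(x)$ from below: writing $\pi_{\beta}(x)=e^{-\beta\mathcal{H}(x)}/Z_{\beta}$ with $Z_{\beta}=\sum_y e^{-\beta\mathcal{H}(y)}\leq 2^N e^{-\beta\min_y\mathcal{H}(y)}$, one obtains $\min_x\pi_{\beta}(x)\geq 2^{-N}e^{-\beta\,\mathrm{Osc}(\mathcal{H})}$, so that $\log(1/\min_x\pi_{\beta}(x))\leq N\log 2 + \beta\,\mathrm{Osc}(\mathcal{H})$. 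Substituting this into the right inequality in \eqref{eq:logsobolevmix} applied to $P_{sw}^{(2)}$ yields $T_{mix}(P_{sw}^{(2)},1/e)\leq (4+\log(\beta\,\mathrm{Osc}(\mathcal{H})+N\log 2))/\alpha(P_{sw}^{(2)})$, as claimed.

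\emph{Asymptotics and the main difficulty.} For the final assertion, if $\mathrm{Osc}(\mathcal{H})=\mathcal{O}(N^k)$ then, treating $\beta$ as fixed, $\beta\,\mathrm{Osc}(\mathcal{H})+N\log 2=\mathcal{O}(N^{\max(k,1)})$, so $4+\log(\beta\,\mathrm{Osc}(\mathcal{H})+N\log 2)=\mathcal{O}(\log N)=\mathcal{O}(\log(\beta N))$ for large $N$, giving the stated bound. I do not expect a genuine obstacle: the argument is essentially bookkeeping built on Corollary \ref{cor:projectionsamplerN} and \eqref{eq:logsobolevmix}. The only two points that need a little care are (i) the lower bound on $\min_x\pi_{\beta}(x)$ in terms of $\mathrm{Osc}(\mathcal{H})$ and $|\mathcal{X}|=2^N$, and (ii) being explicit that $\beta$ is held fixed (so that $\log N$ and $\log(\beta N)$ coincide up to constants) in the last asymptotic simplification.
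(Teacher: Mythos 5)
Your proposal is correct and follows essentially the same route as the paper, which obtains the corollary by combining the Diaconis--Saloff-Coste bounds \eqref{eq:logsobolevmix} with Corollary \ref{cor:projectionsamplerN}. The only detail you spell out that the paper leaves implicit is the estimate $\min_x \pi_{\beta}(x) \geq 2^{-N} e^{-\beta\,\mathrm{Osc}(\mathcal{H})}$, which correctly feeds into the $\log\log(1/\min_x\pi(x))$ term and yields the stated upper bound.
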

Many models in statistical physics satisfy a polynomial in $N$ oscillation of $\mathcal{H}$ with $\mathrm{Osc}(\mathcal{H}) = \mathcal{O}(N^k)$ for some positive integers $k$, for instance the Curie-Weiss model on a complete graph or the Ising model on finite grid \cite{NZ19}. From the viewpoint of MCMC again, the above results indicate that at times it is advantageous to simulate the keep-$\{2\}$-in $P_{sw}^{(2)}$ over $P_{sw}$ with a speedup of at least a factor of $N/\log (\beta N)$.

\subsection{Generalization to $d$-temperature swapping algorithm with $d \geq 2$}\label{subsec:ltemp}

The discussion so far in this section can be generalized to design a projection sampler based on the $d$-temperature swapping algorithm with $\mathbb{N} \ni d \geq 2$. The main results of this section (Corollary \ref{cor:projectionsamplerlN} and \ref{cor:projectionsamplermainl} below) state that various mixing time parameters of the leave-$\{1\}$-out chain based on a $d$-temperature swapping algorithm can be improved by at least a factor of the dimension of the underlying state space times the number of temperatures $d$. We also offer an intuitive explanation on the acceleration effect in Remark \ref{rk:intuitive2}.

Precisely, the projected leave-$\{1\}$-out Markov chain with transition matrix $P_{sw}^{(-1)}$ can be written as, for $x^{(-1)}, y^{(-1)} \in \mathcal{X}^{d-1}$,
\begin{align*}
	P_{sw}^{(-1)}(x^{(-1)},y^{(-1)}) = \sum_{x^1,y^1 \in \mathcal{X}} \pi_0(x^1) P_{sw}((x^1,x^{(-1)}),(y^1,y^{(-1)})).
\end{align*}
To simulate one step from the transition matrix $P_{sw}^{(-1)}$ with a starting state $x^{(-1)}$, we first draw a random state $x^1$ according to the discrete uniform $\pi_0$, then the Markov chain is evolved according to the swapping chain $P_{sw}$ from $(x^1,x^{(-1)})$ to $(y^1,y^{(-1)})$. Note that again we implicitly assume we are able to sample from $\pi_0$.

The state space of the $d$-temperature swapping chain can be decomposed into disjoint unions of $\Omega_{x^{(-1)}}$, that is, 
\begin{align*}
	\mathcal{X}_{sw} = \mathcal{X}^d &= \cup_{x^{(-1)} \in \mathcal{X}^{d-1}} \Omega_{x^{(-1)}}, \quad \Omega_{x^{(-1)}} := \mathcal{X} \times \{x^{(-1)}\}.
\end{align*}
In the terminologies of \cite{JSTV04}, the projection chain (i.e. the notation $\overline{P}$ therein) of $P_{sw}$ with respect to the above partition is $P_{sw}^{(-1)}$, while the restriction chains (i.e. the notation $P_i$ therein) on the state space $\Omega_{x^{(-1)}}$ can be shown to be, for each $x^{(-1)} \in \mathcal{X}^{d-1}$,
$$(P_{sw})_{x^{(-1)}} := \dfrac{1}{2d} P_0 + \left(1-\dfrac{1}{2d}\right) I.$$
Note that the right hand side of the above expression does not depend on $x^{(-1)}$. Thus, the right spectral gap and log-Sobolev constant of $(P_{sw})_{x^{(-1)}}$ are 
\begin{align}\label{eq:restrictiongammal}
	\gamma((P_{sw})_{x^{(-1)}}) = \dfrac{1}{2d} \gamma(P_0), \quad \alpha((P_{sw})_{x^{(-1)}}) = \dfrac{1}{2d} \alpha(P_0).
\end{align}

We now consider $\mathcal{X} = \{0,1\}^N$ with $N \in \mathbb{N}$, and take $P_0$ to be the transition matrix of the simple random walk on the hypercube $\mathcal{X}$. As $\alpha(P_0) = \gamma(P_0)/2 = \frac{1}{N+1}$, using \eqref{eq:restrictiongammal} we see that
\begin{align}\label{eq:restrictiongammal2}
	\gamma((P_{sw})_{x^{(-1)}}) = \dfrac{1}{d} \dfrac{1}{N+1}, \quad \alpha((P_{sw})_{x^{(-1)}}) = \dfrac{1}{2d} \dfrac{1}{N+1}.
\end{align}

We now recall the parameter $\Gamma$ (i.e. the notation $\gamma$ in \cite{JSTV04}) introduced earlier in \eqref{eq:Gamma}. Let $x^* = \argmax \mathcal{H}(x)$ and $y^*$ be chosen such that $y^* \neq x^*$. Let $\mathbf{x}^* \in \mathcal{X}^{d-1}$ be a $(d-1)$-dimensional vector with all entries equal to $x^*$. It can then readily be seen that 
\begin{align*}
	1 \geq \Gamma \geq 1 - \sum_{z \in \mathcal{X}} P_{sw}((y^*,\mathbf{x}^*),(z,\mathbf{x}^*)) = 1 - \dfrac{1}{2d} - \dfrac{1}{2}\dfrac{d-2}{d-1}.
\end{align*}

Using again Theorem $1$ and Theorem $4$ in \cite{JSTV04} leads to

\begin{corollary}[Relaxation time and log-Sobolev time of $P_{sw}^{(-1)}$ are at least $d N$ times faster than that of $P_{sw}$]\label{cor:projectionsamplerlN}
	On the state space $\mathcal{X} = \{0,1\}^N$ with $P_0$ being the simple random walk on $\mathcal{X}$, we have
	\begin{align*}
		\dfrac{1}{\gamma(P_{sw})} &\geq d(N+1) + 3\left(1 - \dfrac{1}{2d} - \dfrac{1}{2}\dfrac{d-2}{d-1}\right)d(N+1) \dfrac{1}{\gamma(P_{sw}^{(-1)})}, \\
		\dfrac{1}{\alpha(P_{sw})} &\geq 2d(N+1) + 6\left(1 - \dfrac{1}{2d} - \dfrac{1}{2}\dfrac{d-2}{d-1}\right)d(N+1) \dfrac{1}{\alpha(P_{sw}^{(-1)})}.
	\end{align*}
\end{corollary}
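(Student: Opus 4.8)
The plan is to invoke the decomposition theorems of \cite{JSTV04}, namely Theorem $1$ for the spectral gap and Theorem $4$ for the log-Sobolev constant, applied to the partition $\mathcal{X}_{sw} = \mathcal{X}^d = \bigcup_{x^{(-1)} \in \mathcal{X}^{d-1}} \Omega_{x^{(-1)}}$ introduced just above \eqref{eq:restrictiongammal}. For this partition the projection chain is $P_{sw}^{(-1)}$ and every restriction chain equals $(P_{sw})_{x^{(-1)}} = \tfrac{1}{2d} P_0 + (1 - \tfrac{1}{2d}) I$: the only move of the $d$-temperature swapping chain that changes the first coordinate while leaving coordinates $2, \ldots, d$ fixed is a level move on coordinate $1$, chosen with probability $\tfrac{1}{2d}$ and, since $\beta_1 = 0$, always accepted, hence running $P_0$; every other move is, from the point of view of the block, a self-loop---either because it would alter one of the frozen coordinates $2,\ldots,d$, or because it is rejected. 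Since $P_0$ is ergodic and reversible with respect to the uniform distribution, so is each $(P_{sw})_{x^{(-1)}}$; $P_{sw}$ is ergodic and $\pi_{sw}$-reversible; and $P_{sw}^{(-1)}$ is ergodic by Proposition \ref{prop:PSergodic}. Hence the hypotheses of \cite{JSTV04} hold, and the only quantities left to supply are the restriction-chain constants, the escape parameter, and the (symbolic) constants of the projection chain.

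These quantities have already been assembled above. By \eqref{eq:restrictiongammal2}, $\gamma((P_{sw})_{x^{(-1)}}) = \tfrac{1}{d(N+1)}$ and $\alpha((P_{sw})_{x^{(-1)}}) = \tfrac{1}{2d(N+1)}$, equivalently $1/\gamma((P_{sw})_{x^{(-1)}}) = d(N+1)$ and $1/\alpha((P_{sw})_{x^{(-1)}}) = 2d(N+1)$; and the escape parameter $\Gamma$ of \eqref{eq:Gamma} satisfies $\Gamma \geq \Gamma' := 1 - \tfrac{1}{2d} - \tfrac{1}{2}\tfrac{d-2}{d-1}$, the lower bound being attained at $(y^*, \mathbf{x}^*)$, at which the only block-preserving moves are the probability-$\tfrac{1}{2d}$ level move on coordinate $1$ together with the $d-2$ consecutive swaps internal to coordinates $2, \ldots, d$ (total probability $\tfrac{1}{2}\tfrac{d-2}{d-1}$), the latter being null swaps since those coordinates all equal $x^*$. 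Feeding $1/\gamma(P_{sw}^{(-1)})$ (left symbolic, as no closed form is available), the restriction-chain gap $\tfrac{1}{d(N+1)}$, and $\Gamma$ into Theorem $1$ of \cite{JSTV04}---which, since here all restriction chains coincide, collapses to an estimate of the form $1/\gamma(P_{sw}) \geq d(N+1)\bigl(1 + 3\Gamma / \gamma(P_{sw}^{(-1)})\bigr)$---and likewise feeding the log-Sobolev constants into Theorem $4$ to obtain $1/\alpha(P_{sw}) \geq 2d(N+1)\bigl(1 + 3\Gamma/\alpha(P_{sw}^{(-1)})\bigr)$, it remains only to weaken $\Gamma$ to $\Gamma'$. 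As both right-hand sides are non-decreasing in $\Gamma$ and $\Gamma \geq \Gamma'$, this substitution preserves the inequalities and gives exactly the two claimed estimates, on noting that $3\Gamma' \cdot 2d(N+1) = 6\Gamma' d(N+1)$.

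The only step calling for genuine care is the last one: invoking Theorems $1$ and $4$ of \cite{JSTV04} with their exact constants, checking that in the present situation---identical restriction chains, so that the minimum over restriction-chain constants is simply $\gamma((P_{sw})_{x^{(-1)}})$ (resp.\ $\alpha((P_{sw})_{x^{(-1)}})$)---the general bounds collapse to the product form displayed above with the numerical factor $3$, and verifying that these bounds are monotone in $\Gamma$ so that the replacement $\Gamma \mapsto \Gamma'$ is legitimate. Everything else---the identification of the projection and restriction chains, the values $\gamma(P_0) = 2/(N+1)$ and $\alpha(P_0) = 1/(N+1)$ for the simple random walk on $\{0,1\}^N$, and the escape-probability computation---has already been carried out in \eqref{eq:restrictiongammal}--\eqref{eq:restrictiongammal2} and the surrounding discussion, so the argument introduces no genuinely new estimate.
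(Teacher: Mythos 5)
Your proposal follows exactly the route the paper takes: the block decomposition $\mathcal{X}^d=\cup_{x^{(-1)}}\Omega_{x^{(-1)}}$, the identification of the projection chain with $P_{sw}^{(-1)}$ and of every restriction chain with $\frac{1}{2d}P_0+(1-\frac{1}{2d})I$, the constants in \eqref{eq:restrictiongammal2}, the escape-probability bound $\Gamma\geq\Gamma'=1-\frac{1}{2d}-\frac{1}{2}\frac{d-2}{d-1}$, and an appeal to Theorems 1 and 4 of \cite{JSTV04}. Your justifications of the restriction-chain identification and of the value of $\Gamma'$ are correct and agree with the paper's computations around \eqref{eq:restrictiongammal2} and \eqref{eq:Gamma}.

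The gap is precisely the step you flag and then assert rather than verify: that Theorem 1 of \cite{JSTV04} ``collapses'' to $1/\gamma(P_{sw})\geq d(N+1)\bigl(1+3\Gamma/\gamma(P_{sw}^{(-1)})\bigr)$. The decomposition theorems of \cite{JSTV04} are tools for proving rapid mixing of the \emph{full} chain: they bound $\gamma(P_{sw})$ and $\alpha(P_{sw})$ from below by expressions of the form $\gamma(P_{sw}^{(-1)})\,\gamma_{\min}/(3\Gamma+\gamma(P_{sw}^{(-1)}))$, i.e.\ they yield the displayed estimates with the opposite inequality sign, $1/\gamma(P_{sw})\leq d(N+1)\bigl(1+3\Gamma/\gamma(P_{sw}^{(-1)})\bigr)$, in which case the legitimate weakening of $\Gamma$ is upward to $1$, not downward to $\Gamma'$; your monotonicity argument is calibrated to the unproved direction. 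A lower bound on $1/\gamma(P_{sw})$ of the claimed form would instead require an upper bound on $\gamma(P_{sw})$ (e.g.\ via explicit test functions), and without further assumptions on $\mathcal{H}$ the stated direction can in fact fail: for constant $\mathcal{H}$ and $d=2$ a Fourier computation gives $\gamma(P_{sw})=\frac{1}{2(N+1)}$, so $1/\gamma(P_{sw})=2(N+1)$, while the right-hand side of the first claimed inequality exceeds $6(N+1)$. To be fair, this is the same leap the paper itself makes (its two-temperature analogue is even recorded as an equality), so you have faithfully reconstructed the paper's argument; but as a standalone proof the pivotal inequality is not delivered by the cited theorems, and the step you single out as ``calling for genuine care'' is exactly where the argument breaks.
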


\begin{remark}[An intuitive justification on the speedup of $P_{sw}^{(-1)}$ over $P_{sw}$]\label{rk:intuitive2}
	In simulating $P_{sw}^{(-1)}$, we first sample according to the stationary distribution of the first coordinate $\pi_0$, followed by a step in the swapping algorithm. As the first coordinate is at stationarity, the swapping algorithm only needs to equilibrate the remaining $d-1$ coordinates.
	
	On the other hand, in simulating the original swapping algorithm $P_{sw}$, efforts are required to equilibrate all $d$ coordinates simultaneously.	
\end{remark}

From the perspective of MCMC, using the projection sampler $P_{sw}^{(-1)}$ can save a factor of $dN$, the number of Markov chains (or temperatures) times the dimension of $\mathcal{X}$, when compared with the original swapping chain $P_{sw}$.

In addition to the speedup, the projection sampler can be interpreted as a randomized swapping algorithm: at each step, the first coordinate is refreshed or randomly resampled from $\pi_0$. This randomized feature allows the projection sampler to start fresh at times and discard or throw away local modes, which is not possible in the original swapping algorithm. To illustrate, consider a $d=3$-temperature swapping algorithm where the current third coordinate $x^3$ is a local mode of $\pi_{\beta}$. In the original swapping algorithm, there is a positive probability that $x^3$ is swapped from the third to second to first back to second and third coordinate, which is not ideal. On the other hand, in the proposed projection sampler, once $x^3$ is swapped to second and then to the first coordinate, $x^3$ will be discarded and the algorithm starts fresh. However, we should note that if $x^3$ is a global mode of $\pi_{\beta}$, then this discarding feature may not be ideal as well.

Utilizing \eqref{eq:logsobolevmix} together with Corollary \ref{cor:projectionsamplerlN} leads us to

\begin{corollary}\label{cor:projectionsamplermainl}
	In the setting of Corollary \ref{cor:projectionsamplerlN}, let $\mathrm{Osc}(\mathcal{H}) = \max_x \mathcal{H}(x) - \min_x \mathcal{H}(x)$ be the oscillation of the function $\mathcal{H}$. We have
	\begin{align*}
		T_{mix}(P_{sw}, 1/e) &= \Omega\left(\dfrac{dN}{\alpha(P_{sw}^{(-1)})}\right), \\
		T_{mix}(P_{sw}^{(-1)}, 1/e) &\leq \dfrac{4 + \log(\beta d\mathrm{Osc}(\mathcal{H}) + N \log 2)}{\alpha(P_{sw}^{(-1)})}. 
	\end{align*}
	In particular, if $\mathrm{Osc}(\mathcal{H}) = \mathcal{O}(N^k)$ for some $k > 0$, then for large enough $N$ we have
	$$T_{mix}(P_{sw}^{(-1)}, 1/e) = \mathcal{O}\left(\dfrac{\log(\beta d N)}{\alpha(P_{sw}^{(-1)})}\right).$$
\end{corollary}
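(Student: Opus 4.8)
The plan is to combine the log-Sobolev mixing-time bounds recorded in \eqref{eq:logsobolevmix} with the quantitative comparison of log-Sobolev constants established in Corollary \ref{cor:projectionsamplerlN}, following the same template that produced Corollary \ref{cor:projectionsamplermain} in the two-temperature case. Concretely there are three things to do: a lower bound on $T_{mix}(P_{sw}, 1/e)$, an upper bound on $T_{mix}(P_{sw}^{(-1)}, 1/e)$, and then the specialization when $\mathrm{Osc}(\mathcal{H})$ is polynomial in $N$.

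For the lower bound I would start from the left-hand inequality of \eqref{eq:logsobolevmix}, $T_{mix}(P_{sw}, 1/e) \ge \tfrac{1}{2\alpha(P_{sw})}$, and plug in the second display of Corollary \ref{cor:projectionsamplerlN}, which gives $\tfrac{1}{\alpha(P_{sw})} \ge 6\bigl(1-\tfrac{1}{2d}-\tfrac12\tfrac{d-2}{d-1}\bigr)\,d(N+1)\,\tfrac{1}{\alpha(P_{sw}^{(-1)})}$. The only subtlety is to note that the $d$-dependent prefactor stays bounded away from zero: a one-line check shows $1-\tfrac{1}{2d}-\tfrac12\tfrac{d-2}{d-1}$ is decreasing in $d$ and lies in $[\tfrac12,\tfrac34]$ for every $d\ge 2$, so $\tfrac{1}{\alpha(P_{sw})} \ge 3d(N+1)\tfrac{1}{\alpha(P_{sw}^{(-1)})}$ and hence $T_{mix}(P_{sw},1/e) \ge \tfrac{3d(N+1)}{2\alpha(P_{sw}^{(-1)})} = \Omega\bigl(dN/\alpha(P_{sw}^{(-1)})\bigr)$.

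For the upper bound I would apply the right-hand inequality of \eqref{eq:logsobolevmix} to $P_{sw}^{(-1)}$, which is stationary with respect to the product distribution $\pi_{sw}^{(-1)} = \otimes_{i=2}^d \pi_{\beta_i}$ because $\pi_{sw}$ factorizes. The only real computation is a lower bound on $\min_{x^{(-1)}} \pi_{sw}^{(-1)}(x^{(-1)}) = \prod_{i=2}^d \min_{x^i}\pi_{\beta_i}(x^i)$: from $\pi_{\beta_i}(x) \propto e^{-\beta_i\mathcal{H}(x)}$ and $|\mathcal{X}| = 2^N$ one gets $\min_x \pi_{\beta_i}(x) \ge 2^{-N} e^{-\beta_i\,\mathrm{Osc}(\mathcal{H})} \ge 2^{-N} e^{-\beta\,\mathrm{Osc}(\mathcal{H})}$, whence $\log\bigl(1/\min_{x^{(-1)}} \pi_{sw}^{(-1)}(x^{(-1)})\bigr) \le (d-1)\bigl(N\log 2 + \beta\,\mathrm{Osc}(\mathcal{H})\bigr) \le \beta d\,\mathrm{Osc}(\mathcal{H}) + Nd\log 2$. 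Substituting into \eqref{eq:logsobolevmix} gives $T_{mix}(P_{sw}^{(-1)},1/e) \le \bigl(4 + \log(\beta d\,\mathrm{Osc}(\mathcal{H}) + Nd\log 2)\bigr)/\alpha(P_{sw}^{(-1)})$, matching the stated bound up to the harmless constant inside the logarithm. Finally, when $\mathrm{Osc}(\mathcal{H}) = \mathcal{O}(N^k)$ the inner expression is $\mathcal{O}(\beta d N^{\max(k,1)})$, so by slow variation of the logarithm $4 + \log(\cdots) = \mathcal{O}(\log(\beta dN))$ for $N$ large, giving $T_{mix}(P_{sw}^{(-1)},1/e) = \mathcal{O}\bigl(\log(\beta dN)/\alpha(P_{sw}^{(-1)})\bigr)$.

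I do not expect a genuine obstacle here: the whole argument is essentially a transcription of the $d=2$ proof of Corollary \ref{cor:projectionsamplermain}, and the only two places requiring any care are (i) verifying that the prefactor coming out of Corollary \ref{cor:projectionsamplerlN} is bounded below uniformly in $d$, and (ii) correctly handling the product structure of $\pi_{sw}^{(-1)}$ over the $d-1$ retained coordinates when estimating its minimal mass.
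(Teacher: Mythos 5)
Your argument is exactly the paper's intended route: combine the log-Sobolev mixing-time bounds in \eqref{eq:logsobolevmix} with Corollary \ref{cor:projectionsamplerlN}, checking that the prefactor $1-\tfrac{1}{2d}-\tfrac12\tfrac{d-2}{d-1}=\tfrac12+\tfrac{1}{2d(d-1)}>\tfrac12$ for the lower bound and bounding $\log\bigl(1/\min \pi_{sw}^{(-1)}\bigr)$ via the product structure for the upper bound. The only deviation is that your bookkeeping yields $\log(\beta d\,\mathrm{Osc}(\mathcal{H})+Nd\log 2)$ rather than the paper's $\log(\beta d\,\mathrm{Osc}(\mathcal{H})+N\log 2)$, a discrepancy of at most $\log d$ inside the logarithm that is immaterial for the stated asymptotic conclusion.
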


In the literature, it is noted in \cite{MZ03} that a common choice is to set $d$ proportional to $N$. As a result, if we choose $d = N$, then the leave-$\{1\}$-out projection chain enjoys at least a speedup of a multiplicative factor of $N^2/\log(N)$ when compared with the original swapping algorithm in terms of the worst-case $L^2$ mixing time.

The analysis in this section can be generalized to the case where the highest temperature (or smallest inverse temperature) of the swapping algorithm is more generally $\beta_1 \geq 0$, and under the assumption that we can sample from $\pi_{\beta_1}$. While in practice it may not be possible to do so, often we have rapidly mixing Markov chains at high temperatures, which can be used for the resampling step as a surrogate for sampling from $\pi_{\beta_1}$.

\subsection{Numerical experiments}\label{subsec:num}

In this section, we present a simple bimodal example as in Section \ref{subsubsec:numlifted}, where the last coordinate of $P_{sw}^{(-1)}$ mixes well while that of $P_{sw}$ is stuck at the region around one mode.

In view of the setting and notations in this section and Section \ref{subsubsec:numlifted}, suppose the state space is $\mathcal{X} = \llbracket -n,n \rrbracket$ on which the target distribution is, for $x \in \mathcal{X}$,
\begin{align*}
	\pi_\beta(x) \propto 2^{|x|} = e^{-\beta \mathcal{H}(x)},
\end{align*}
where we take $\beta = \ln 2$ and $\mathcal{H}(x) = -|x|$. There are two modes of this distribution at $\pm n$ respectively. In this context, $\pi_0$ is simply the discrete uniform distribution on $\mathcal{X}$ which can be sampled at ease.

For reproducibility, the code used in our experiments is available at \url{https://github.com/mchchoi/factorization/tree/main}. We now state the parameters used in the experiments:
\begin{itemize}
	\item $n = 100$.
	\item $d = 3$ with temperature ladder $(\beta_1,\beta_2,\beta_3) = (0,\frac{\ln2}{2}, \ln 2)$.
	\item The proposal chain of Metropolis-Hastings during the level move of the swapping algorithm moves from $x$ to $\min\{x+1,n\}$ and $\max\{x-1,-n\}$ with probability $1/2$, and $0$ otherwise.
	\item All samplers $P_{sw}^{(-1)}, P_{sw}$ are initialized at $-100$, the mode on the left, and are simulated for $100,000$ steps.
\end{itemize}

The results are summarized and presented in Figure \ref{fig:Vshapeswap}, Table \ref{tab:firstswap} and \ref{tab:secondswap}.

First, we note that $P_{sw}$ does not exhibit mixing: from the traceplot, histogram and empirical mean, it only explores the basin around the left mode at $-100$ and do not traverse to the right mode at $100$ in the experiment.

Second, from the traceplots and histograms we see that $P_{sw}^{(-1)}$ is able to hop between the two modes effectively. From Table \ref{tab:first} the empirical distribution generated by $P_{sw}^{(-1)}$ is notably closer to the ground truth $\pi_\beta$ than that generated by $P_{sw}$. From Table \ref{tab:secondswap} the empirical mean and second moment generated by $P_{sw}^{(-1)}$ are also closer to the respective ground truth than that generated by $P_{sw}$. These results give empirical evidence that it is advantageous to use the projection sampler $P_{sw}^{(1)}$ over $P_{sw}$ to sample from $\pi_\beta$. \textcolor{black}{Thus, for the purpose of estimating $\pi^{(1)}(g)$ where $g$ is either $g(x) = x$ or $g(x) = x^2$, it is preferable to use samples generated by $P_{sw}^{(-1)}$ over that of $P_{sw}$.}

\begin{figure}[htbp]
	\centering
	
	\begin{subfigure}{\textwidth}
		\centering
		\includegraphics[width=0.45\textwidth]{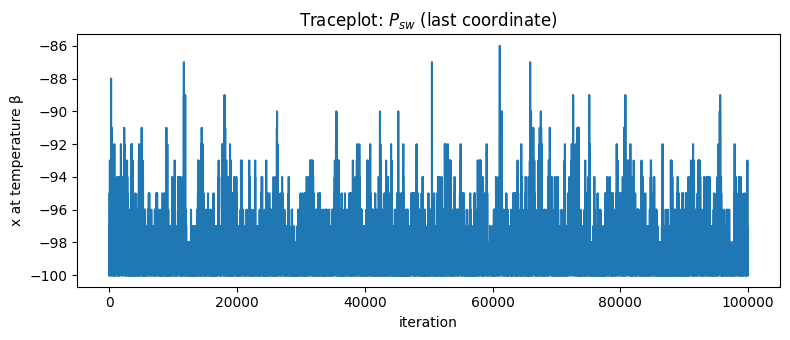}
		\includegraphics[width=0.45\textwidth]{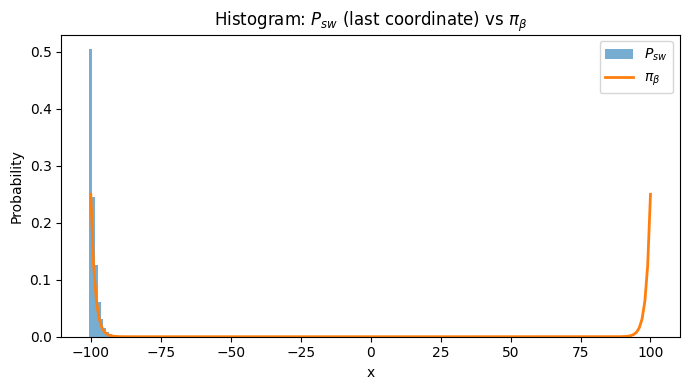}
		\caption{Traceplot and histogram of the trajectories of the last coordinate of $P_{sw}$.}
	\end{subfigure}
	
	\begin{subfigure}{\textwidth}
		\centering
		\includegraphics[width=0.45\textwidth]{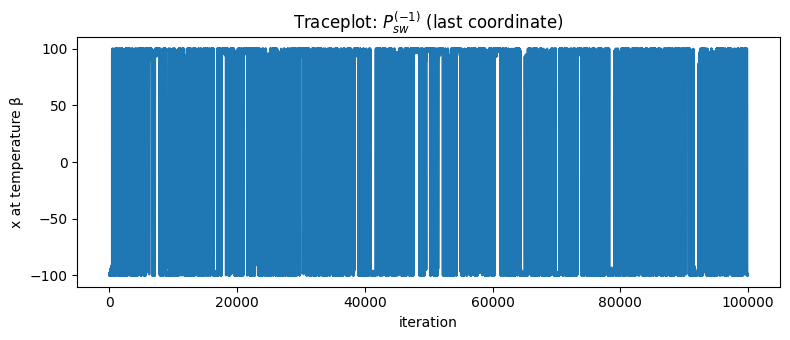}
		\includegraphics[width=0.45\textwidth]{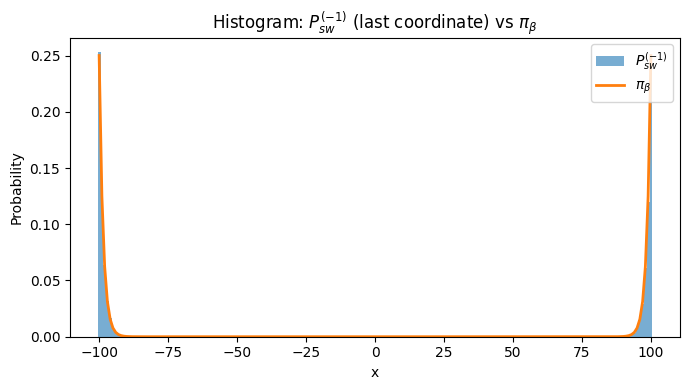}
		\caption{Traceplot and histogram of the trajectories of the last coordinate of $P_{sw}^{(-1)}$.}
	\end{subfigure}
	
	\caption{Numerical experiments comparing the two samplers $P_{sw}, P_{sw}^{(-1)}$ with target distribution being the V-shaped $\pi_\beta(x) \propto 2^{|x|}$.}
	\label{fig:Vshapeswap}
\end{figure}

\begin{table}[H]
	\centering
	\begin{tabular}{lcc}
		\toprule
		Sampler & $\widetilde{D}_{TV}(\widehat{\pi}_\beta,\pi_\beta)$ & $\widetilde{D}_{KL}(\widehat{\pi}_\beta \| \pi_\beta)$ \\
		\midrule
		$P_{sw}$ (last coordinate) & 0.50 & 0.69 \\
		$P_{sw}^{(-1)}$ (last coordinate) & 0.01 & 0.00 \\
		\bottomrule
	\end{tabular}
	\caption{Comparison of total variation distance and KL divergence between $\widehat{\pi}_\beta$ and the ground truth $\pi_\beta$, where $\widehat{\pi}_\beta$ is the empirical distribution formed by the trajectories of the samplers. }
	\label{tab:firstswap}
\end{table}

\begin{table}[H]
	\centering
	\begin{tabular}{lcc}
		\toprule
		Sampler & Mean & Second moment \\
		\midrule
		$P_{sw}$ (last coordinate) & -99.01 & 9804.85 \\
		$P_{sw}^{(-1)}$ (last coordinate) & -2.01 & 9804.27 \\
		Truth $\pi^{(1)}$ & 0 & 9803.00 \\
		\bottomrule
	\end{tabular}
	\caption{Comparison of the first and second moment between the samplers and the ground truth $\pi_\beta$.}
	\label{tab:secondswap}
\end{table}

\subsection{$\pi$-stationarity of the lifted sampler}

\textcolor{black}{In this subsection, we prove that the lifted sampler $P$ introduced in Section~\ref{subsec:liftedMCMC} is indeed $\pi$-stationary. We adopt the notations and the setting therein.}

{\color{black}
	\begin{proposition}\label{prop:pistationaritylifted}
		Suppose that $Q$ is $\pi^{(1)}$-stationary and that $a,b,c>0$ with $a+b+c=1$. Then the transition matrix $P$ defined in Section \ref{subsec:liftedMCMC} is $\pi=\pi^{(1)}\otimes \mathcal{U}(\{-1,+1\})$-stationary.
	\end{proposition}
	
	\begin{proof}
		In this proof, we write $p_x:=\pi^{(1)}(x)$, so that
		\[
		\pi(x,v)=\frac{1}{2}p_x,
		\qquad x\in \llbracket -n,n\rrbracket,\quad v\in\{-1,+1\}.
		\]
		Let $P_Q$, $P_{\mathrm{dir}}$ and $P_{\mathrm{flip}}$ denote respectively the transition matrices corresponding to the position move, the directed Metropolis move and the direction flip. Then
		\[
		P=aP_Q+bP_{\mathrm{dir}}+cP_{\mathrm{flip}}.
		\]
		It is enough to show that each of these three transition matrices is $\pi$-stationary.
		
		First, since $Q$ is $\pi^{(1)}$-stationary, for any $(y,w)\in \mathcal{X}$ we have
		\[
		\sum_{(x,v)\in \mathcal{X}} \pi(x,v)P_Q((x,v),(y,w))
		=
		\frac{1}{2}\sum_{x\in \llbracket -n,n\rrbracket} p_x Q(x,y)
		=
		\frac{1}{2}p_y
		=
		\pi(y,w).
		\]
		Hence $P_Q$ is $\pi$-stationary.
		
		Second, for the direction flip,
		\[
		P_{\mathrm{flip}}((x,v),(y,w))
		=
		\mathbf{1}_{\{y=x,\;w=-v\}}.
		\]
		Therefore
		\[
		\sum_{(x,v)\in \mathcal{X}} \pi(x,v)P_{\mathrm{flip}}((x,v),(y,w))
		=
		\pi(y,-w)
		=
		\frac{1}{2}p_y
		=
		\pi(y,w),
		\]
		and so $P_{\mathrm{flip}}$ is also $\pi$-stationary.
		
		It remains to check the directed Metropolis move. Fix $(y,w)\in \mathcal{X}$. The chain can arrive at $(y,w)$ under $P_{\mathrm{dir}}$ in two ways: either from $(y-w,w)$ by an accepted directed proposal, or from $(y,-w)$ by a rejected directed proposal. Hence
		\[
		\sum_{(x,v)\in \mathcal{X}} \pi(x,v)P_{\mathrm{dir}}((x,v),(y,w))
		=
		\frac{1}{2}p_{y-w}\alpha_w(y-w)
		+
		\frac{1}{2}p_y\bigl(1-\alpha_{-w}(y)\bigr),
		\]
		where the first term is interpreted as zero if $y-w\notin \llbracket -n,n\rrbracket$.
		By the definition of the Metropolis acceptance probability,
		\[
		p_{y-w}\alpha_w(y-w)
		=
		p_y\alpha_{-w}(y).
		\]
		Indeed, if both $y-w$ and $y$ belong to $\llbracket -n,n\rrbracket$, this is the usual identity
		\[
		p_{y-w}\min\left\{1,\frac{p_y}{p_{y-w}}\right\}
		=
		p_y\min\left\{1,\frac{p_{y-w}}{p_y}\right\}.
		\]
		If $y-w\notin \llbracket -n,n\rrbracket$, then both sides are zero by convention and by the boundary definition of $\alpha_{-w}(y)$. Therefore
		\[
		\sum_{(x,v)\in \mathcal{X}} \pi(x,v)P_{\mathrm{dir}}((x,v),(y,w))
		=
		\frac{1}{2}p_y\alpha_{-w}(y)
		+
		\frac{1}{2}p_y\bigl(1-\alpha_{-w}(y)\bigr)
		=
		\frac{1}{2}p_y
		=
		\pi(y,w).
		\]
		Thus $P_{\mathrm{dir}}$ is $\pi$-stationary.
		
		Combining the three parts gives
		\[
		\pi P
		=
		a\pi P_Q+b\pi P_{\mathrm{dir}}+c\pi P_{\mathrm{flip}}
		=
		a\pi+b\pi+c\pi
		=
		\pi.
		\]
		This proves the claim.
\end{proof}}

\section{Applications to KL-projected factored filtering in an Ising hidden Markov model (HMM)}\label{sec:factorfilter}

{\color{black}In earlier sections we have seen how the projected Markov chains can serve as improved samplers in lifted Markov chains and swapping algorithm. In addition to its applications in sampling, the projected chains can also be applied for improved approximate inference. In this section, we further expand along this direction and propose a factored filtering scheme in which the prediction step uses a product-form approximation of the original Markov transition. This yields a substantial computational gain: the per-step cost scales linearly with the dimension, whereas the exact filter scales exponentially, at the price of an approximation bias. Our numerical results further show that the distance to independence in \eqref{def:IfP} provides a meaningful quantitative proxy for this induced approximation error.

\paragraph{Latent Ising HMM}
Let $\mathcal{X} = \{-1,+1\}^d$ be a $d$-dimensional state space of an $L\times L$ grid where $d=L^2$ with neighbor sets $\mathcal{N}(i)$ at coordinate $i \in \llbracket d \rrbracket$ along with open boundary conditions. The latent dynamics $P$ is the \emph{random-scan single-site Glauber} chain $(X_t)_{t \in \mathbb{N}} = (X_t^{1},X_t^{2},\ldots,X_t^{d})_{t \in \mathbb{N}}$:
given $X_t=x$, choose $I_t\sim\mathcal{U}(\llbracket d \rrbracket)$ and resample only coordinate $I_t$ via
\[
\mathbb P(X_{t+1}^{i}=+1\mid X_t=x,\, I_t=i)=\sigma\!\Big(2\beta\sum_{j\in \mathcal{N}(i)}x_j\Big),
\qquad
X_{t+1}^{-i}=x^{(-i)},
\]
where $\sigma(z)=1/(1+e^{-z})$, $\beta\ge 0$ is the inverse temperature and $X_{t+1}^{-i}$ denotes the coordinates excluding $i$ at time $t+1$. Observations follow
\emph{node-wise flip noise}:
\[
\mathbb P(Y_t^i=X_t^i)=1-\varepsilon,\qquad \mathbb P(Y_t^i=-X_t^i)=\varepsilon,
\]
so the likelihood factorizes as $g(y_t\mid x_t)=\prod_{i=1}^d g_i(y_t^i\mid x_t^i)$ where
\[
g_i(y^i\mid x^i) :=\mathbb P(Y_t^i=y^i\mid X_t^i=x^i)
=\begin{cases}
	1-\varepsilon, & y^i=x^i,\\
	\varepsilon, & y^i=-x^i,
\end{cases}
\qquad x^i,y^i\in\{-1,+1\}.
\]
The filtering distribution is $\pi_t(x)=\mathbb P(X_t=x\mid Y_0, Y_1, \ldots Y_t)$.

\paragraph{KL-projected factored filtering}
We initialize $q_0$ to be the discrete uniform distribution on $\mathcal{X}$. We maintain a product approximation $q_t(x)=\prod_{i=1}^d q_t^{(i)}(x^i)$. At time $t$, we replace the coupled kernel $P$ by its KL-information projection onto product kernels given by
$$\widehat P_{q_t} := \otimes_{i=1}^d P^{(i)}_{q_t},$$
where each $P^{(i)}_{q_t}$ is an effective two-state transition obtained by averaging the Ising update over neighbors drawn from
$q_t$.  Suppose that $X^i_{t} = s \in \{-1,+1\}$. To simulate one step of $P^{(i)}_{q_t}$, we note that
\begin{enumerate}[label=(\roman*)]
	\item Draw the neighbor configuration $(x^j)_{j\in \mathcal{N}(i)}$ independently according to the current product belief $q_t$, i.e.,
	for each $j\in \mathcal{N}(i)$ sample $x^j\sim q_t^{(j)}(\cdot)$.
	\item Compute the probability
	\[
	p_i \;:=\; \sigma\!\left(2\beta\sum_{j\in \mathcal{N}(i)} x^j\right).
	\]
	\item Draw the updated spin $s'\in\{-1,+1\}$ by setting $s'=+1$ with probability $p_i$ and $s'=-1$ with probability $1-p_i$.
	\item With probability $1-\frac{1}{d}$ set $X_{t+1}^i \leftarrow s$ (site $i$ not selected), and with probability $\frac{1}{d}$ set $X_{t+1}^i \leftarrow s'$ (site $i$ selected).
\end{enumerate}

The filtering recursion of this factored filter is:
\[
\text{Factored filter}: \text{(predict)}\;\; \tilde q_{t+1}=q_t\,\widehat P_{q_t},\qquad
\text{(update)}\;\; q_{t+1}(x)\propto \tilde q_{t+1}(x)\,g(y_{t+1}\mid x).
\]
Because $g$ factorizes, the update is coordinate-wise and $q_{t+1}$ remains a product. For comparison, the exact (original) filter uses the full kernel $P$:
\[
\text{Exact filter}: \text{(predict)}\;\; \tilde\pi_{t+1}=\pi_t\,P,\qquad
\text{(update)}\;\; \pi_{t+1}(x)\propto \tilde\pi_{t+1}(x)\,g(y_{t+1}\mid x),
\]
which is typically intractable when $d$ is large since $\pi_t$ is a distribution on $\mathcal{X}$ of size $|\mathcal{X}|=2^d$.

\paragraph{Distance to independence}
We compute the distance to independence of $P$ at time $t$ by
\[
\mathbb{I}^{q_t}(P)=D^{q_t}_{KL}(P \| \otimes_{i=1}^d P^{(i)}_{q_t}) ,
\]
which quantifies the information loss incurred in the prediction step by enforcing independent
dynamics. In the random-scan Glauber chain, for each configuration $x$ the kernel $P(x,\cdot)$
is supported on at most $d+1$ states (self-loop plus single-site flips), so $\mathbb{I}^{q_t}(P)$ can be
estimated by Monte Carlo sampling using $q_t$ without enumerating $\{-1,+1\}^d$.

The following two experiments were run on Google Colab using a CPU backend (Intel Xeon, 2 vCPUs at $2.30$\,GHz) with $12$\,GB system RAM. The code is available at \url{https://github.com/mchchoi/factorization/tree/main}.

\subsection*{Experiment: comparison between factored filtering and exact filtering for $L = 4$}
We take $L=4$ (and hence $d = 16$) so the state space has size $2^{16}$ and exact filtering is still computationally feasible.
For fixed $(\beta,\varepsilon) = (0.6,0.1)$, we simulate one trajectory $(X_t,Y_t)_{t=0}^T$ with $T = 1000$ and run:
(i) the exact filter, and (ii) the KL-projected factored filter above. We report
\[
\mathrm{Err}_{\mathrm{marg}}(t):=\frac1d\sum_{i=1}^d\big|q_t(X^i=+1)-\pi_t(X^i=+1)\big|
\]
together with $\mathbb{I}^{q_t}(P)$. In addition, since $d$ is small we can explicitly form the product joint
$q_t(x)=\prod_{i=1}^d q_t^{(i)}(x^i)$ and compute the joint total variation discrepancy
\[
\widetilde{D}_{TV}(q_t,\pi_t)=\frac12\sum_{x\in\mathcal{X}}\big|q_t(x)-\pi_t(x)\big|.
\]
Plotting $\mathrm{Err}_{\mathrm{marg}}(t)$, $\widetilde{D}_{TV}(q_t,\pi_t)$ and $\mathbb{I}^{q_t}(P)$ versus $t$ in Figure~\ref{fig:pfexp1} illustrates that the distance to independence can serve as a diagnostic for when the independence-based approximation is accurate, both at the marginal and joint levels. In our run, the sample correlation between $\mathrm{Err}_{\mathrm{marg}}(t)$ and $\mathbb{I}^{q_t}(P)$ is $0.52$, while the sample correlation between $\widetilde{D}_{TV}(q_t,\pi_t)$ and $\mathbb{I}^{q_t}(P)$ is $0.54$; in both cases the Pearson correlation test yields a p-value close to zero, indicating statistically significant associations.}

\begin{figure}[h]
	\centering
	\includegraphics[width=0.36\textwidth]{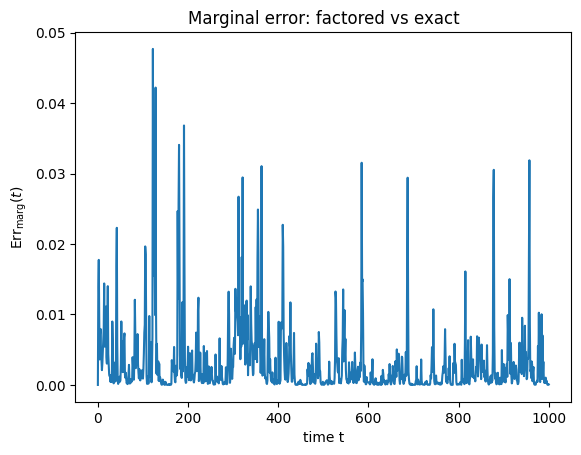}
	\includegraphics[width=0.36\textwidth]{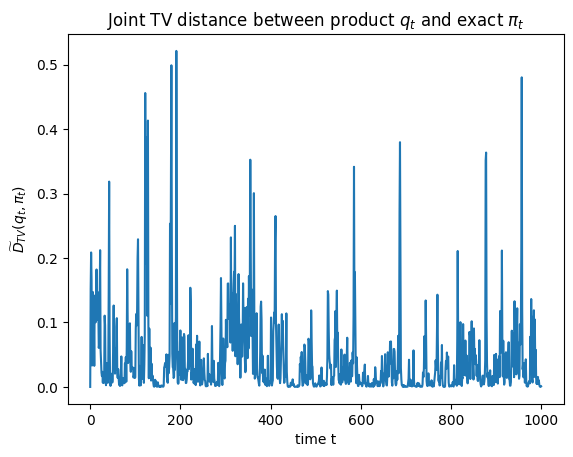}
	\includegraphics[width=0.36\textwidth]{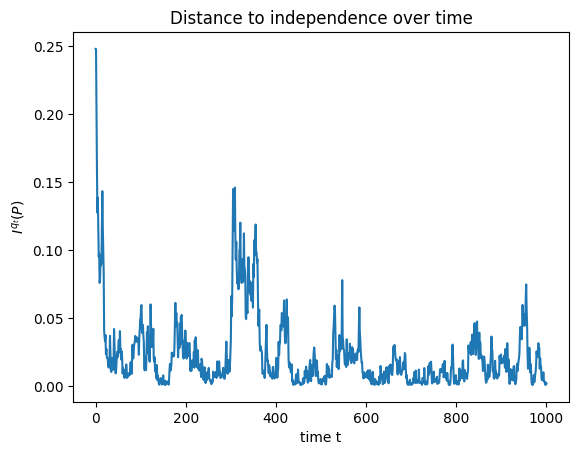}
%
	\caption{Time series of the marginal error $\mathrm{Err}_{\mathrm{marg}}(t)$, the joint total variation discrepancy $\widetilde{D}_{TV}(q_t,\pi_t)$, and the distance to independence $\mathbb{I}^{q_t}(P)$ versus time $t$. The time-averaged marginal error is $0.003$, time-averaged joint TV discrepancy is $0.043$, and time-averaged distance to independence is $0.023$. The sample correlations are $0.52$ between $\mathrm{Err}_{\mathrm{marg}}(t)$ and $\mathbb{I}^{q_t}(P)$, and $0.54$ between $\widetilde{D}_{TV}(q_t,\pi_t)$ and $\mathbb{I}^{q_t}(P)$. Parameters: $L=4$, $\beta=0.6$, $\varepsilon=0.1$, $T=1000$.}
	\label{fig:pfexp1}
\end{figure}

\subsection*{Scalability experiment: runtime scaling with $d$}
The exact filter represents $\pi_t$ on the full state space $\{-1,+1\}^d$, hence requires
$\Omega(2^d)$ memory and at least $\Omega(2^d)$ work per step to compute $\tilde\pi_{t+1}$. In contrast, the factored filter stores only $d$ marginals and performs local updates based on neighborhoods of size $\le 4$, yielding $\mathcal{O}(d)$ memory and $\mathcal{O}(d)$ time per filtering step (up to a small constant
from enumerating $2^{|\mathcal{N}(i)|} \leq 2^4$ neighbor configurations). This makes approximate inference feasible for large grids where exact filtering is impossible.

\smallskip
To demonstrate scalability, we benchmark wall-clock runtime per filtering step (prediction and update)
as a function of $d=L^2$. We report both methods where feasible (e.g. $L \in \{2,3,4\}$ for the exact filter) and run the factored filter for substantially larger $L$ (e.g. $L \in \{10,50,100\}$). For fixed $(\beta,\varepsilon) = (0.6,0.1)$, we simulate three trajectories $(X_t,Y_t)_{t=0}^T$ with $T = 30$ and report the average runtime of these three runs. Figure \ref{fig:pfexp2} plots seconds-per-step versus $d$: the exact filter rapidly becomes intractable when $L$ increases beyond $4$, while the factored filter exhibits linear growth in $d$.

\begin{figure}[htbp]
	\centering
	\includegraphics[width=0.5\textwidth]{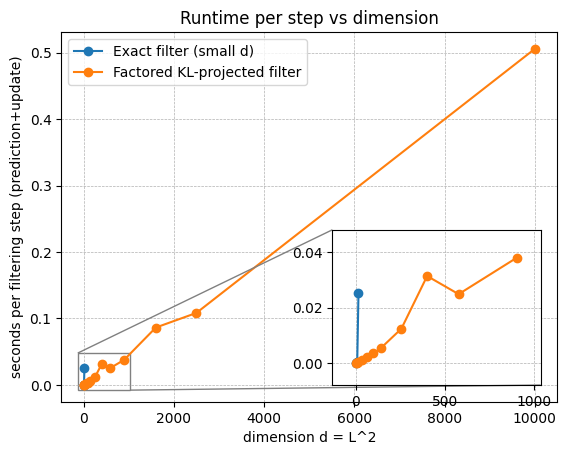}
	
%
	
	\caption{Average wall-clock runtime per filtering step (prediction and update) of the exact and factored filter over three runs in the Ising HMM model with varying $L \in \{2, 3, 4, 6, 8, 10, 12, 16, 20, 24, 30, 40, 50, 100\}$, $\beta = 0.6$, $\varepsilon = 0.1$, $T = 30$.}
	\label{fig:pfexp2}
\end{figure}

\paragraph{Advantages of the factored filter}
The KL-projected factored filter is an independence-based approximation tailored to the current
belief: it replaces the coupled transition $P$ by its KL-optimal product surrogate $\widehat P_{q_t}$, preserves tractability of both prediction and update under factorized likelihoods, and scales to high-dimensional latent chains, at the tradeoff of introducing approximation error. The diagnostic $\mathbb{I}^{q_t}(P)$ provides a quantitative proxy of approximation error in the dynamics.

\section*{Acknowledgments}
The authors gratefully acknowledge the constructive comments from the associate editor and two reviewers. Michael Choi is grateful for helpful discussions on large deviations with Pierre Del Moral, and on information theory with Cheuk Ting Li and Lei Yu. Michael Choi acknowledges the financial support of the projects A-8001061-00-00, NUSREC-HPC-00001, NUSREC-CLD-00001, A-0000178-01-00, A-0000178-02-00 and A-8003574-00-00 at National University of Singapore. Youjia Wang gratefully acknowledges the financial support from National University of Singapore via the Presidential Graduate Fellowship. GW expresses gratitude to Shun Watanabe for insightful discussions.
The work of GW was supported in part by the Special Postdoctoral Researcher Program (SPDR) of RIKEN and by the Japan Society for the Promotion of Science KAKENHI under Grant 23K13024.

\bibliographystyle{siamplain}
\bibliography{ref}

\begin{thebibliography}{10}

\bibitem{AF02}
{\sc D.~Aldous and J.~A. Fill}, {\em Reversible {M}arkov chains and random
  walks on graphs}, 2002.
\newblock Unfinished monograph, recompiled 2014, available at
  \url{http://www.stat.berkeley.edu/$\sim$aldous/RWG/book.html}.

\bibitem{BR16}
{\sc N.~Bhatnagar and D.~Randall}, {\em Simulated tempering and swapping on
  mean-field models}, J. Stat. Phys., 164 (2016), pp.~495--530.

\bibitem{BLM13}
{\sc S.~Boucheron, G.~Lugosi, and P.~Massart}, {\em Concentration
  inequalities}, Oxford University Press, Oxford, 2013.
\newblock A nonasymptotic theory of independence, With a foreword by Michel
  Ledoux.

\bibitem{Bremaud2017}
{\sc P.~Br\'emaud}, {\em Discrete probability models and methods}, vol.~78 of
  Probability Theory and Stochastic Modelling, Springer, Cham, 2017.
\newblock Probability on graphs and trees, Markov chains and random fields,
  entropy and coding.

\bibitem{CLP99}
{\sc F.~Chen, L.~Lov\'{a}sz, and I.~Pak}, {\em Lifting {M}arkov chains to speed
  up mixing}, in Annual {ACM} {S}ymposium on {T}heory of {C}omputing
  ({A}tlanta, {GA}, 1999), ACM, New York, 1999, pp.~275--281.

\bibitem{CK18}
{\sc G.-Y. Chen and T.~Kumagai}, {\em Cutoffs for product chains}, Stochastic
  Process. Appl., 128 (2018), pp.~3840--3879.

\bibitem{CBM18}
{\sc Y.~Chen, A.~Bu\v{s}i\'{c}, and S.~Meyn}, {\em Ergodic theory for
  controlled {M}arkov chains with stationary inputs}, Ann. Appl. Probab., 28
  (2018), pp.~79--111.

\bibitem{CW24}
{\sc M.~C.~H. Choi and G.~Wolfer}, {\em Systematic approaches to generate
  reversiblizations of {M}arkov chains}, IEEE Transactions on Information
  Theory, 70 (2024), pp.~3145--3161.

\bibitem{CT06}
{\sc T.~M. Cover and J.~A. Thomas}, {\em Elements of information theory},
  Wiley-Interscience [John Wiley \& Sons], Hoboken, NJ, second~ed., 2006.

\bibitem{DZ10}
{\sc A.~Dembo and O.~Zeitouni}, {\em Large deviations techniques and
  applications}, vol.~38 of Stochastic Modelling and Applied Probability,
  Springer-Verlag, Berlin, 2010.
\newblock Corrected reprint of the second (1998) edition.

\bibitem{DHM00}
{\sc P.~Diaconis, S.~Holmes, and R.~M. Neal}, {\em Analysis of a nonreversible
  {M}arkov chain sampler}, Ann. Appl. Probab., 10 (2000), pp.~726--752.

\bibitem{DM09}
{\sc P.~Diaconis and L.~Miclo}, {\em On characterizations of {M}etropolis type
  algorithms in continuous time}, ALEA Lat. Am. J. Probab. Math. Stat., 6
  (2009), pp.~199--238.

\bibitem{DSC96}
{\sc P.~Diaconis and L.~Saloff-Coste}, {\em Logarithmic {S}obolev inequalities
  for finite {M}arkov chains}, Ann. Appl. Probab., 6 (1996), pp.~695--750.

\bibitem{FF02}
{\sc G.~Fayolle and A.~de~La~Fortelle}, {\em Entropy and the principle of large
  deviations for discrete-time {M}arkov chains}, Problemy Peredachi
  Informatsii, 38 (2002), pp.~121--135.

\bibitem{GLSS15}
{\sc D.~Gavinsky, S.~Lovett, M.~Saks, and S.~Srinivasan}, {\em A tail bound for
  read-{$k$} families of functions}, Random Structures Algorithms, 47 (2015),
  pp.~99--108.

\bibitem{GK17}
{\sc A.~Ghassami and N.~Kiyavash}, {\em Interaction information for causal
  inference: The case of directed triangle}, in 2017 IEEE International
  Symposium on Information Theory (ISIT), 2017, pp.~1326--1330.

\bibitem{G98}
{\sc P.~Gustafson}, {\em A guided walk {M}etropolis algorithm}, Statistics and
  Computing, 8 (1998), pp.~357--364.

\bibitem{HOP18}
{\sc Y.~Hao, A.~Orlitsky, and V.~Pichapati}, {\em On learning {M}arkov chains},
  in Advances in Neural Information Processing Systems, S.~Bengio, H.~Wallach,
  H.~Larochelle, K.~Grauman, N.~Cesa-Bianchi, and R.~Garnett, eds., vol.~31,
  Curran Associates, Inc., 2018.

\bibitem{H98}
{\sc D.~M. Higdon}, {\em Auxiliary variable methods for {M}arkov chain {M}onte
  {C}arlo with applications}, Journal of the American Statistical Association,
  93 (1998), pp.~585--595.

\bibitem{H89}
{\sc N.~J. Higham}, {\em Matrix nearness problems and applications}, in
  Applications of matrix theory ({B}radford, 1988), vol.~22 of Inst. Math.
  Appl. Conf. Ser. New Ser., Oxford Univ. Press, New York, 1989, pp.~1--27.

\bibitem{HGG98}
{\sc T.~Holliday, A.~Goldsmith, and P.~Glynn}, {\em Entropy and mutual
  information for {M}arkov channels with general inputs}, in Proceedings of the
  annual Allerton conference on communication control and computing, vol.~40,
  The University; 1998, 2002, pp.~824--833.

\bibitem{HKLPSW19}
{\sc D.~Hsu, A.~Kontorovich, D.~A. Levin, Y.~Peres, C.~Szepesv\'{a}ri, and
  G.~Wolfer}, {\em Mixing time estimation in reversible {M}arkov chains from a
  single sample path}, Ann. Appl. Probab., 29 (2019), pp.~2439--2480.

\bibitem{JSTV04}
{\sc M.~Jerrum, J.-B. Son, P.~Tetali, and E.~Vigoda}, {\em Elementary bounds on
  {P}oincar\'{e} and log-{S}obolev constants for decomposable {M}arkov chains},
  Ann. Appl. Probab., 14 (2004), pp.~1741--1765.

\bibitem{Lacker23}
{\sc D.~Lacker}, {\em Independent projections of diffusions: Gradient flows for
  variational inference and optimal mean field approximations}, Annales de
  l'Institut Henri Poincaré,  (2023).
\newblock In press.

\bibitem{levin2017markov}
{\sc D.~A. Levin and Y.~Peres}, {\em {M}arkov chains and mixing times},
  vol.~107, American Mathematical Soc., 2017.

\bibitem{LL23}
{\sc Z.~Li and L.-H. Lim}, {\em Generalized matrix nearness problems}, SIAM J.
  Matrix Anal. Appl., 44 (2023), pp.~1709--1730.

\bibitem{L04}
{\sc T.~M. Liggett}, {\em Interacting particle systems---an introduction}, in
  School and {C}onference on {P}robability {T}heory, vol.~XVII of ICTP Lect.
  Notes, Abdus Salam Int. Cent. Theoret. Phys., Trieste, 2004, pp.~1--56.

\bibitem{MR02}
{\sc N.~Madras and D.~Randall}, {\em {M}arkov chain decomposition for
  convergence rate analysis}, Ann. Appl. Probab., 12 (2002), pp.~581--606,
  \url{https://doi.org/10.1214/aoap/1026915617}.

\bibitem{MZ03}
{\sc N.~Madras and Z.~Zheng}, {\em On the swapping algorithm}, Random
  Structures Algorithms, 22 (2003), pp.~66--97.

\bibitem{Mathe1998}
{\sc P.~Math\'{e}}, {\em Relaxation of product {M}arkov chains on product
  spaces}, J. Complexity, 14 (1998), pp.~319--332.

\bibitem{M20}
{\sc P.~Monmarch\'e}, {\em Kinetic walks for sampling}, ALEA Lat. Am. J.
  Probab. Math. Stat., 17 (2020), pp.~491--530.

\bibitem{Montenegro2006}
{\sc R.~Montenegro and P.~Tetali}, {\em Mathematical aspects of mixing times in
  {M}arkov chains}, Found. Trends Theor. Comput. Sci., 1 (2006), pp.~x+121.

\bibitem{NZ19}
{\sc F.~R. Nardi and A.~Zocca}, {\em Tunneling behavior of {I}sing and {P}otts
  models in the low-temperature regime}, Stochastic Process. Appl., 129 (2019),
  pp.~4556--4575.

\bibitem{N85}
{\sc S.~Natarajan}, {\em Large deviations, hypotheses testing, and source
  coding for finite {M}arkov chains}, IEEE Trans. Inform. Theory, 31 (1985),
  pp.~360--365.

\bibitem{Neal2011HMC}
{\sc R.~M. Neal}, {\em Mcmc using hamiltonian dynamics}, in Handbook of Markov
  Chain Monte Carlo, S.~Brooks, A.~Gelman, G.~L. Jones, and X.-L. Meng, eds.,
  Chapman \& Hall/CRC, 2011, ch.~5, pp.~113--162.

\bibitem{PS17}
{\sc N.~S. Pillai and A.~Smith}, {\em Elementary bounds on mixing times for
  decomposable {M}arkov chains}, Stochastic Process. Appl., 127 (2017),
  pp.~3068--3109.

\bibitem{polyanskiy2022information}
{\sc Y.~Polyanskiy and Y.~Wu}, {\em Information theory: From coding to
  learning}, Book draft,  (2022).

\bibitem{R04}
{\sc Z.~Rached, F.~Alajaji, and L.~L. Campbell}, {\em The {K}ullback-{L}eibler
  divergence rate between {M}arkov sources}, IEEE Trans. Inform. Theory, 50
  (2004), pp.~917--921.

\bibitem{SaloffCoste1997}
{\sc L.~Saloff-Coste}, {\em Lectures on finite {M}arkov chains}, in Lectures on
  probability theory and statistics ({S}aint-{F}lour, 1996), vol.~1665 of
  Lecture Notes in Math., Springer, Berlin, 1997, pp.~301--413.

\bibitem{sason2022information}
{\sc I.~Sason}, {\em Information inequalities via submodularity and a problem
  in extremal graph theory}, Entropy, 24 (2022), p.~597.

\bibitem{GGR21}
{\sc B.~van Ginkel, B.~van Gisbergen, and F.~Redig}, {\em Run-and-tumble
  motion: the role of reversibility}, J. Stat. Phys., 183 (2021), pp.~Paper No.
  44, 31.

\bibitem{V14}
{\sc M.~Vidyasagar}, {\em An elementary derivation of the large deviation rate
  function for finite state {M}arkov chains}, Asian J. Control, 16 (2014),
  pp.~1--19.

\bibitem{W19}
{\sc M.~J. Wainwright}, {\em High-dimensional statistics}, vol.~48 of Cambridge
  Series in Statistical and Probabilistic Mathematics, Cambridge University
  Press, Cambridge, 2019.
\newblock A non-asymptotic viewpoint.

\bibitem{WC23}
{\sc Y.~Wang and M.~C.~H. Choi}, {\em Information divergences of {M}arkov
  chains and their applications}, 2023, \url{https://arxiv.org/abs/2312.04863}.

\end{thebibliography}
\end{document}


\maketitle

\section{A detailed example}

Here we include some equations and theorem-like environments to show
how these are labeled in a supplement and can be referenced from the
main text.
Consider the following equation:
\begin{equation}
  \label{eq:suppa}
  a^2 + b^2 = c^2.
\end{equation}
You can also reference equations such as \cref{eq:matrices,eq:bb} 
from the main article in this supplement.

\lipsum[100-101]

\begin{theorem}
An example theorem.
\end{theorem}

\lipsum[102]
 
\begin{lemma}
An example lemma.
\end{lemma}

\lipsum[103-105]

Here is an example citation: \cite{KoMa14}.

\section[Proof of Thm]{Proof of \cref{thm:bigthm}}
\label{sec:proof}

\lipsum[106-112]

\section{Additional experimental results}
\Cref{tab:foo} shows additional
supporting evidence. 

\begin{table}[htbp]
\footnotesize
  \caption{Example table.}  \label{tab:smfoo}
\begin{center}
  \begin{tabular}{|c|c|c|} \hline
   Species & \bf Mean & \bf Std.~Dev. \\ \hline
    1 & 3.4 & 1.2 \\
    2 & 5.4 & 0.6 \\ \hline
  \end{tabular}
\end{center}
\end{table}

\bibliographystyle{siamplain}
\bibliography{references}